\documentclass[12pt,oneside,reqno]{amsart}
\usepackage{mathrsfs}
\usepackage{amssymb,amsmath,amsthm,color}
\usepackage{graphicx, cite}
\usepackage{hyperref}
\usepackage{url}
\usepackage{setspace}
\usepackage{enumerate}
\textheight=8.5in \textwidth=5.5in
\usepackage[margin=1in]{geometry}
\usepackage{mathtools}
\usepackage{verbatim}

\usepackage{footnote}
\usepackage{cite}
\usepackage{amscd}
\usepackage{color}
\usepackage{euscript}
\usepackage{calc}                   

\usepackage{tikz}
\usepackage{graphicx} 

\newcommand{\norm}[1]{\left\| #1 \right\|}
\newtheorem{theorem}{Theorem}[section]
\newtheorem{lemma}[theorem]{Lemma}
\newtheorem{proof of lemma}[theorem]{Proof of Lemma}
\newtheorem{proposition}[theorem]{Proposition}

\newtheorem{corollary}[theorem]{Corollary}

\theoremstyle{definition}

\newtheorem{remark}[theorem]{Remark}

\numberwithin{equation}{section}

\newcommand{\abs}[1]{\lvert#1\rvert}
\newcommand{\M}{\mathcal{M}}
\newcommand{\cH}{\mathcal{H}}
\newcommand{\hM}{\widehat{\mathcal{M}}}
\newcommand{\Sp}{\mathfrak{S}}
\newcounter{cases}
\newcounter{subcases}



\begin{document}
	
	\title[Orthonormal Strichartz estimates]{orthonormal Strichartz estimates on torus and waveguide manifold  and applications}
	
	\author{Divyang G. Bhimani and  Subhash.  R.  Choudhary }
	
	\address{Department of Mathematics, Indian Institute of Science Education and Research-Pune, Homi Bhabha Road, Pune 411008, India}
	
	\email{divyang.bhimani@iiserpune.ac.in}
	\email{subhashranjan.choudhary@students.iiserpune.ac.in}
	\subjclass[2010]{Primary 35Q55, 35B45; Secondary 42B37}
	
	\date{}
	
	\keywords{Fractional Laplacian on torus, waveguide manifold, Strichartz inequality for orthonormal functions, decoupling inequality, Hartree equation,  local well-posedness.}
	\begin{abstract}
    We establish new orthonormal Strichartz estimates for the fractional Schr\"odinger equations on torus $\mathbb T$ and waveguide manifold $\mathbb R^n\times \mathbb T^m$.  We generalizes the  result of Nakamura \cite{nakamura2020orthonormal} on  torus; while this is the first result on the waveguide manifold. The main novelty  in this paper is the derivation of  various  kernel estimates associated to  the fractional Schr\"odinger equations.   Our kernel estimate generalizes the classical  dispersive estimate on torus  due to Kenig-Ponce-Vega  \cite{Kenig_1991_OsciDis}. On the other hand, we obtain new   $\ell^2$ decoupling inequality  for degeneracy type surfaces  to treat the case of  waveguide manifold; which maybe of independent interest and complements several known results.  As an application, we establish local  and small data global well-posednes for the Hartree equation with infinitely many particles with non-trace class initial data.
	\end{abstract}
	
	\maketitle
	\tableofcontents
	
\section{Introduction}
\subsection{Classical Strichartz estimates}\label{cst}
Consider the nonlinear Schr\"odinger equation (NLS):
\begin{equation}\label{NLS}
\begin{cases}
    i\partial_tu -\Delta u = F(u)\\
    u(t=0)=f
\end{cases} (t,z) \in \mathbb R \times \mathcal{M}.  
\end{equation}
Here $\mathcal{M}= \mathbb R^d$ (Euclidean space) or $\mathbb T^d$ (torus) or $\mathbb R^n \times \mathbb T^m$ (waveguide manifold) and $F: \mathbb C \to \mathbb C$ is a nonlinearity. 
In order to develop well-posed theory for \eqref{NLS}, Strichartz estimates of the following type
\begin{eqnarray}\label{gst}
    \|e^{-it\Delta} f\|_{L^p_t (\mathbb R, L^q_z (\mathcal{M}))} \lesssim \|f\|_{H^s_z(\mathcal{M})}
\end{eqnarray}
played a fundamental role.  It is known that \cite{KeelTao, ORS} the estimate
\begin{eqnarray}\label{ost}
    \|e^{-it\Delta} f\|_{L^p_t (\mathbb R, L^q_z (\mathbb R^d)))} \lesssim \|f\|_{L_{z}^2(\mathbb R^d)}
\end{eqnarray}
holds if and only if $(p,q)$ satisfies $\frac{2}{p}+ \frac{d}{q}= \frac{d}{2}$ with $2 \leq p, q \leq \infty$ and $(p, q, d) \neq (2, \infty,2).$
  The case $p=q$ dates back  Strichartz \cite{ORS} via the Fourier restriction method. Later, it was  generalized  combining  the duality argument and  the following dispersive  estimate:
\begin{equation}\label{cec}
    \|e^{-it\Delta} f\|_{L^{\infty}_z(\mathbb R^d)} \lesssim |t|^{-\frac{d}{2}} \|f\|_{L^1_z (\mathbb R^d)}.
\end{equation}
The end point  $(p,q)= \left( 2, \frac{2d}{d-2} \right), d\neq 2,$ was then proven in \cite{KeelTao}.
Let $f\in L_{z}^2(\mathbb R^d)$
with $ \text{supp} \hat{f} \subset [-N, N]^d.$ Then,  by \eqref{ost}, we have  
\begin{eqnarray}
    \|e^{-it\Delta} f\|_{L^{q}_{t,z}(\mathbb R \times \mathbb R^d)} \lesssim  N^{\frac{d}{2}- \frac{d+2}{q}} \|f\|_{L_{z}^2(\mathbb R^d)}
\end{eqnarray}
for $\frac{2(d+2)}{d} \leq q \leq \infty$. On torus $\mathbb T^d,$ we do not have  dispersive estimates \eqref{cec} and so Strichartz estimates of the form \eqref{gst} is  difficult. However, one would like to have the following Strichartz estimates on torus:
\begin{eqnarray}\label{ft}
    \|e^{-it\Delta} f\|_{L_{t,z}^{q}(I \times \mathbb T^d)} \lesssim_{|I|}  N^{ d/2- (d+2)/q}\|f\|_{L_{z}^2(\mathbb T^d)}
\end{eqnarray}
for all $f\in L_{z}^2(\mathbb T^d)$ and $\text{supp} \hat{f} \subset [-N, N]^d,$ where $I$ is a compact interval. In contrast with the Euclidean case, an estimate of the form  \eqref{ft} does not hold with $I=\mathbb R,$ unless $q= \infty.$
In order to develop well-posedness theory for classical  NLS  on $d-$dimensional torus, Bourgain initiated the study of \eqref{ft} in his seminal paper \cite{Bourgainrestriction} and later it is improved by Bourgain and  Demeter \cite{bourgain2015} by employing the  machinery from harmonic analysis (decoupling theorem). While Barron in  \cite[Proposition 3.4]{BarronAlex}  established similar result on waveguide manifold, where he employed the discrete restriction theorem from \cite{bourgain2015}. To state their results, we introduce 
 frequency  cut-off  operator $P_{\leq N},$   which is defined by $$P_{\leq N} \phi=(\textbf{1}_{[-N,N]} \widehat{\phi})^{\lor},$$ where $\phi$ is Schwartz function on $\mathcal{M}$,     $\widehat{\cdot}$ and ${\cdot}^{\lor}$ denotes the Fourier and inverse Fourier transform respectively (see Section \ref{pre}).
Specifically, they proved the following result. 
\begin{theorem}[\cite{bourgain2015,Bourgainrestriction, BarronAlex}]\label{TIN1}
Assume that for $\mathcal{M}= \mathbb  T^d$ or $\mathbb R^n \times \mathbb T^m$ with $d=n+m,$
\begin{eqnarray*}
    \sigma_1= \begin{cases}
        0& if  \quad 2 \leq q \leq \frac{2(d+2)}{d}\\
        \frac{d}{2}- \frac{d+2}{q} &  if \quad  \frac{2(d+2)}{d} \leq q \leq \infty
    \end{cases}.
\end{eqnarray*}
Then, for any $\epsilon >0$ and $N>1,$ we have 
    \begin{equation*}\label{nsise}
    \| e^{-it \Delta} P_{\leq N} f \|_{L^q_{t,z}(I \times \mathcal{M})} \lesssim_{\epsilon} N^{\sigma_1+\epsilon} \| f \|_{L_{z}^2 (\mathcal{M})}.
\end{equation*}
\end{theorem}
We recall that 
\begin{align*}
\widehat{\mathcal{M}}= \begin{cases}
     \mathbb R^d & if \quad \mathcal{M}=\mathbb R^d\\
     \mathbb Z^d & if \quad \mathcal{M}=\mathbb T^d\\
     \mathbb R^n \times \mathbb Z^m & if \quad   \mathcal{M}=\mathbb R^n \times \mathbb T^m  
 \end{cases},   
\end{align*}
where $\widehat{\mathcal{M}}$ denotes the Pontryagin dual of $\mathcal{M}.$
Here 
$e^{it (-\Delta)^{\theta/2}_\mathcal{M}}f(z)$ denotes the fractional  Schr\"odinger propagator and defined for $\xi \in \hM,$
\[
\mathcal{F}\left[e^{it (-\Delta)^{\theta/2}_\mathcal{M}}f(z)\right](\xi) 
   = \begin{cases}
     e^{2 \pi it \abs{\xi}^\theta } \mathcal{F}f (\xi) & if \quad \xi \in \mathbb{R}^d \text{ or } \mathbb{Z}^d \\
     e^{2 \pi it (\abs{\xi_{1}}^\theta +\abs{{\xi_{2}}}^{\theta}) } \mathcal{F}f (\xi) & if \quad \xi=(\xi_{1}, \xi_{2}) \in \mathbb{R}^n \times \mathbb{Z}^m
   \end{cases} , 
\]
 where $\theta>0.$  In \cite{laskin2002fractional}  fractional Laplacian have been applied to model physical phenomena. It was
formulated by Laskin  \cite{laskin2002fractional} as a result of extending the Feynman path integral from the
Brownian-like to L\'evy-like quantum mechanical paths.
For convenience, we denote 
$
(-\Delta)^{\theta/2}_\mathcal{M} := (-\Delta)^{\theta/2},
$
where the underlying manifold $\mathcal{M}$ will be clear from the context.
Recently Dinh  \cite[Theorems 1.1 and 1.2]{Dinh} established the analogue of Theorem \ref{TIN1} for fractional  Laplacian on torus and Euclidean spaces: 
\begin{theorem}[\cite{Dinh}]\label{HT2}
  Let $2 \leq p \leq \infty, 2 \leq q < \infty,(p,q,d) \neq (2,\infty,2)$ and $\frac{2}{p}+\frac{d}{q} \leq \frac{d}{2}.$ Assume that $\mathcal{M}= \mathbb T^d$ or  $\mathbb R^d$ and  
  \begin{equation*}
\sigma_{1} = \begin{cases}
\frac{1}{p} &  if \quad \theta \in (1,\infty) \\
\frac{2-\theta}{p} & if \quad  \theta \in (0,1)
\end{cases}.
\end{equation*} Then, for all \(N>1,\)  we have 
\begin{equation*}
\|e^{it(-\Delta)^{\frac{\theta}{2}}}P_{\leq N}f\|_{L^{p}_{t}(I, L_{z}^{q} (\mathcal{M}) )} \lesssim_{|I|} N^{\sigma_{1}}\|f\|_{L_{z}^{2}(\mathcal{M} )}.
\end{equation*}
\end{theorem}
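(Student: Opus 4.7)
The strategy is to reduce Theorem~\ref{HT2} to the classical Schr\"odinger Strichartz estimate of Theorem~\ref{TIN1} by dyadic frequency localization followed by a time-rescaling that flattens the fractional dispersion relation. First I would decompose $P_{\leq N}f=\sum_{K\leq N}P_K f$ via Littlewood-Paley and, using $\sigma_1\geq 0$, reduce matters to the dyadic estimate
\begin{equation*}
\|e^{it(-\Delta)^{\theta/2}}P_K f\|_{L^p_t(I,L^q_x(\mathcal M))}\lesssim K^{\sigma_1}\|P_K f\|_{L^2_x(\mathcal M)},
\end{equation*}
summed by the triangle inequality and dominated by the piece $K\sim N$.

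On Fourier support $\{|\xi|\sim K\}$ the phase $|\xi|^\theta$ has Hessian of size $K^{\theta-2}$, so the fractional propagator mimics a Schr\"odinger propagator under the change of variables $s=K^{\theta-2}t$. I would partition $I$ into roughly $\max(1,|I|K^{\theta-2})$ subintervals and further partition the annulus into caps $\tau$ of radius $M$ to be chosen. On each cap a Taylor expansion of $|\xi|^\theta$ around its centre gives a harmless constant phase, a linear (Galilean) phase invisible to the norms, a quadratic phase that rescales to the flat Laplacian, and a cubic remainder of size $|t|K^{\theta-3}M^3$, which is $O(1)$ as long as $M$ is chosen small enough. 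On each cap I then apply Theorem~\ref{TIN1} to the rescaled Schr\"odinger propagator acting at frequency scale $M$, and sum the caps by $\ell^2$ almost-orthogonality (Plancherel on $\mathbb R^d$, Bourgain-Demeter decoupling on $\mathbb T^d$). Undoing the rescaling produces a Jacobian factor $K^{(2-\theta)/p}$, and the number of subintervals contributes $K^{(\theta-2)_{+}/p}$; optimizing the cap size $M$ balances these against the $M$-dependent exponent extracted from Theorem~\ref{TIN1} and yields $\sigma_1=1/p$ when $\theta\in(1,\infty)$ and $\sigma_1=(2-\theta)/p$ when $\theta\in(0,1)$.

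The main obstacle is the bookkeeping in this last step: three competing parameters (the cap radius $M$, the number of time subintervals, and the $M$-dependent exponent from Theorem~\ref{TIN1}) must be balanced, and the optimal $M$ changes character at the threshold $\theta=1$, where the cubic Taylor remainder switches from being negligible on all of $I$ to requiring aggressive subdivision; this dichotomy is precisely what forces the two branches in the definition of $\sigma_1$. A secondary issue is the passage from the diagonal $L^p_{t,x}$ form of Theorem~\ref{TIN1} to the off-diagonal $L^p_tL^q_x$ form required here, which can be bridged by interpolation with the trivial Sobolev/Bernstein embedding at the $p=\infty$ endpoint. Finally, on $\mathbb T^d$ one must also verify that each cap of radius $M$ contains enough lattice points for the discrete decoupling in Theorem~\ref{TIN1} to apply with a uniform constant, but this is automatic once $M$ is taken to be a positive power of $K$.
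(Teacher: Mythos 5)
Theorem~\ref{HT2} is cited from Dinh's paper \cite{Dinh}; the present paper does not reproduce its proof, so there is no ``paper's own proof'' to compare against directly. What the paper \emph{does} import from \cite{Dinh} is the frequency-localized dispersive estimate Lemma~\ref{dispersive on R}, and this is a strong hint about the route Dinh actually takes: localize to a dyadic shell $|\xi|\sim K$, use the $L^1\to L^\infty$ bound $K^{d(2-\theta)/2}|t|^{-d/2}$ valid on the short window $|t|\lesssim K^{1-\theta}$, run the standard $TT^*$/Hardy--Littlewood--Sobolev machinery on each window to get a constant $K^{(2-\theta)/p}$, and then pay $K^{(\theta-1)/p}$ from the $\sim K^{\theta-1}$ windows covering $I$; these two factors combine to give exactly $K^{1/p}$ uniformly in $\theta>1$ (and $K^{(2-\theta)/p}$ on the single window $I$ when $\theta\in(0,1)$, giving $(2-\theta)/p$). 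That is precisely the scheme the present paper runs in its proofs of Theorems~\ref{Strichartz} and~\ref{arbitrary loss ons} for the waveguide, once in each of the variables $\xi_1,\xi_2$, which is why $2/p$ appears there.

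Your decoupling route (Littlewood--Paley, cap partition, Taylor expansion to a paraboloid, Bourgain--Demeter) is genuinely different and is closer in spirit to the paper's Lemma~\ref{decoupling inequality for surface on waveguided} and Lemma~\ref{l2 decoupling wave} than to what produces $\sigma_1$ in Theorem~\ref{HT2}. Two concrete problems with your sketch. First, the claim that ``optimizing the cap size $M$ $\dots$ yields $\sigma_1=1/p$'' does not match what a decoupling argument actually delivers: as the paper's Theorem~\ref{diagonal or nondiagonal for single stri} (and its proof via Lemma~\ref{l2 decoupling wave}) shows, the decoupling route gives only $N^{\varepsilon}$-loss for $2\le q\le \tfrac{2(d+2)}{d}$, not $N^{1/p}$, because after the time/space parabolic rescaling the relevant spacetime region has radius $\geq \delta^{-\max\{1,\theta/2\}}$ and Bourgain--Demeter is applied once, with caps of side $\sim 1$ summed by Plancherel. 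So either your bookkeeping is off, or you have over-subdivided the time interval; the factors $K^{(2-\theta)/p}$ (Jacobian) and $K^{(\theta-2)_+/p}$ (number of windows) cancel, and they cannot regenerate a $K^{1/p}$. Second, your remark that ``Theorem~\ref{TIN1} applies at frequency scale $M$'' after an anisotropic rescaling that flattens the Hessian $\phi''(\xi_0)$ is not benign on $\mathbb{T}^d$: the linear change of variables diagonalizing $\phi''(\xi_0)$ does not preserve the lattice $\mathbb{Z}^d$, so one cannot invoke the torus Strichartz estimate directly on each cap. This is exactly the difficulty the paper confronts by introducing the smoothed/continuous extension $\widetilde{E}$ and the $f^\delta$ regularization in the proof of Lemma~\ref{l2 decoupling wave} before taking $\delta\to 0$; your sketch waves this away with the remark that ``each cap contains enough lattice points,'' which is not the issue. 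In short, your proposal is a legitimate alternative strategy, but as written it neither reproduces the exponent $\sigma_1=1/p$ (it would, if done carefully, give something sharper in the subcritical range) nor handles the torus lattice correctly, and it is not the dispersive-estimate argument that the cited Lemma~\ref{dispersive on R} indicates Dinh actually uses.
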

However, to the best of authors knowledge  so far the analogue of Theorem \ref{HT2} on  waveguide manifold $\mathbb{R}^{n} \times \mathbb{T}^{m}$ remains open in the literature (cf. Theorem \ref{TIN1}).
\subsection{Known orthonormal Strichartz estimates}
In this paper we study ``orthonormal Strichartz estimates" (OSE for short)  of the following form
\begin{eqnarray}\label{onstr}
    \left\| \sum_{j} \lambda_j  |e^{it (-\Delta)^{\theta/2}_\mathcal{M}} P_{\leq N} f_j|^2\right\|_{L^p_tL^q_z (I\times \mathcal{M})} \lesssim_{|I|,\theta}  N^{\sigma} \|\lambda\|_{\ell^{\alpha'}},\quad (t, z)\in I \times \mathcal{M}
\end{eqnarray}
whenever $\M=\mathbb{T}^d  \ \text{or} \  \mathbb R^d \    \text{ or } \mathbb{R}^{n} \times \mathbb{T}^{m}.$ Here, $I$ is a bounded time interval in $\mathbb R, 1\leq p, q \leq \infty, \alpha'>0$, $1/\alpha + 1/ \alpha'=1, \theta>0,$ and $\sigma \geq 0.$ Here $(p,q)$ satisfies either the admissible condition 
\begin{eqnarray}\label{adc}
    \frac{2}{p}+\frac{d}{q}=d, \quad d= \dim (\mathcal{M}),
\end{eqnarray}
or the $\theta-$admissible condition
\begin{eqnarray}\label{theta admissible}
    \frac{\theta}{p}+\frac{d}{q}=d .
\end{eqnarray}
The initial data $(f_j)_{j}$ are a family of orthonormal functions in the homogeneous Sobolev space $\dot{H}^{s}(\mathcal{M})$. Indeed, for fermionic particles, the orthogonality assumption is natural (see Section \ref{Hinf} below).  The triangle inequality  and   classical Strichartz estimates (Subsection \ref{cst}) gives \eqref{onstr}  for $\theta=2$  and  $\alpha'=1$; and 
at this stage we do not require any use of orthogonality of the  initial data. It would be now interesting to ask  how much gain (if any) can be sought from the orthogonality by raising $\alpha' \geq 1$ as far as possible.\\

The idea of  transition from classical inequalities  for single function  to  the orthonormal family goes back to Lieb–Thirring  \cite{lieb1975bound} and found 
applications to the stability of matter. 
The OSE  of the form \eqref{onstr} goes back to Frank-Lewin-Lieb-Seiringer  \cite{frank2014strichartz} and Frank-Sabin \cite{frank2017restriction, frank2016stein}. There are many reasons  that one would like to see transition from   \eqref{gst} to  \eqref{onstr}. 
In fact, Chen, Hong, Pavlovic \cite{chen2017global, chen2018scattering} and  Frank and Sabin \cite{sabin2014hartree, frank2014strichartz} could successfully  employed  estimates of the form \eqref{onstr}   to understand dynamics of infinitely many fermions in  a quantum system. See Section \ref{Hinf} below and  \cite{nakamura2020orthonormal, Hoshiya2024, HoshiyaJMP2025, HadamaJFA2025, neal2019, nealbez2021, chen2017global,chen2018scattering,lewin2014hartree,lewin2015hartree,Hoshiya2024,Hong2019}.

For future  convenience,  we categorize  sharp Schr\"odinger admissible pair $(p,q)$ \eqref{adc} into four groups. 
See Figure \ref{fig:enter-label}.
\begin{enumerate}
    \item [(i)] Subcritical regime: $d\geq1,$ $1 \leq q < \frac{d+1}{d-1}$
    \item[(ii)] Critical point: $d \geq 2,$ $q=\frac{d+1}{d-1}$
    \item[(iii)]  Supercritical regime: $d=2,\frac{d+1}{d-1}<q<\infty$ or $d \geq 3, \frac{d+1}{d-1}<q<\frac{d}{d-2}$ 
    \item[(iv)] Keel-Tao endpoint: $d \geq 3,$ $q=\frac{d}{d-2}$
 \end{enumerate}
 
\begin{figure}[h]
		\centering
		\begin{tikzpicture}[scale=1]
			\draw[->] (0,0) -- (9,0) node[right]{$\frac{1}{q}$};
			\draw[->] (0,0) -- (0,7) node[above]{$\frac{1}{p}$};
			\coordinate (O) at (0,0);
			\coordinate (B) at (8,0);
			\coordinate (A) at (4.8,4);
			\coordinate (C) at (3.5,6);
			\draw[thick] (C) -- (A);
			\draw[thick] (A) -- (B);
			\draw[dashed] (A) -- (4.8,0);
			\draw[dashed] (C) -- (3.5,0);
			\draw[dashed] (A) -- (0,2);
			\draw[dashed] (0,4) -- (A);
            \draw[dashed] (0,6) -- (C);
			\node at (O) [left] {$O$};
			\node at (B) [above right] {$B$};
			\node at (O) [below] {$0$};
			\node at (A) [right] {$A$};
			\node at (C) [above] {$C$};
			\node at (3.5,0) [below ] {$\frac{d-2}{d}$};
			\node at (4.8,0) [below] {$\frac{d-1}{d+1}$};
			\node at (0,2) [left] {$\frac{1}{2}$};
			\node at (0,4) [left] {$\frac{d}{d+1}$};
			\node at (0,6) [left] {$1$};
			\node at (8,0) [below] {$1$};
           \node[font=\small] at (7.8,2.5) {subcritical regime};
           \node[font=\small] at (5.7,5.5) {supercritical regime};
           \fill (O) circle (2pt);
           \fill (A) circle (2pt);
           \fill (B) circle (2pt);
           \fill (C) circle (2pt);
           \fill (A) circle (2pt);
           \fill (0,6) circle (2pt);
           \fill (0,2) circle (2pt);
           \fill (0,4) circle (2pt);
		\end{tikzpicture}
		\caption{The line joining  $B$ to $C$ represents equation \eqref{adc} for $d>2.$ The point $A$ is a critical point. The segment $BA$ represents subcritical regime. The segment $AC$ represents supercritical regime.}
		\label{fig:enter-label}
	\end{figure}
For any two points $X,Y\in [0,1]^2,$ we use notation 
$$(X,Y)=\{(1-\tau)X+\tau Y: \tau \in (0,1)\}$$
$$[X,Y)=\{(1-\tau)X+\tau Y: \tau \in [0,1)\}$$
$$(X,Y]=\{(1-\tau)X+\tau Y: \tau \in (0,1]\}$$
$$[X,Y]=\{(1-\tau)X+\tau Y: \tau \in [0,1]\}$$
to represent the line segment connecting $X$ and $Y.$ We denote $XYZ$ for the convex hull of points $X,Y,Z \in [0,1]^2.$ And we also denote $XYZ \setminus X$ for the convex hull of points $X,Y,Z \in [0,1]^2$ excluding the point $X.$
\begin{theorem}[OSE on $\mathbb R^d$ \cite{frank2014strichartz,frank2016stein,neal2019}]\label{onsE} Let $\mathcal{M}=\mathbb{R}^{d},$ $I=\mathbb{R}$, $\theta=2, (f_{j})_{j}$ is an  orthonormal system in $L_{z}^2(\mathbb{R}^{d})$ and  $d \geq 1$ and $\lambda =(\lambda_{j})_{j} \in \ell^{\alpha'},$ $\alpha'\geq 1$ and $(p,q)$ satisfies \eqref{adc}.
    \begin{enumerate}
        \item \label{onsE1} (Subcritical regime) Assume that  $1 \leq q <\frac{d+1}{d-1},$  i.e. $\left( \frac{1}{q}, \frac{1}{p}\right) \in (A, B]$ in Figure \ref{fig:enter-label} and $\sigma =0.$  Then the estimate \eqref{onstr} holds if and only if  $\alpha' \leq \frac{2q}{q+1}.$ 
        \item (Critical point) On the point $(\frac{1}{q},\frac{1}{p})=(\frac{d-1}{d+1},\frac{d}{d+1})=A$ in Figure \ref{fig:enter-label}, the estimate \eqref{onstr} with $\alpha'= \frac{2q}{q+1}=p=\frac{d}{d+1}$ fails.
        \item (Supercritical regime) On the region $\frac{d+1}{d-1}<q<\frac{d}{d-2},$ i.e. $(\frac{1}{q},\frac{1}{p}) \in (A, C)$ in Figure \ref{fig:enter-label},   the estimate \eqref{onstr} hold as long as $\alpha' <p$ and this is sharp upto $\epsilon-\text{loss} $ in the sense that \eqref{onstr} fails if $\alpha' >p.$ 
        \item (Keel-Tao endpoint) On the point $(\frac{1}{q},\frac{1}{p})=(\frac{d-2}{d},1)=C,$ and $d \geq 3$ in Figure \ref{fig:enter-label}, the estimate \eqref{onstr} holds iff  $\alpha'= 1.$  
    \end{enumerate}
\end{theorem}
For further results on \eqref{onstr} with $\mathcal{M}=\mathbb R^d$ for the Laplacian in the presence of external potential, we refer to \cite{Hoshiya2024}.   Nakamura \cite{nakamura2020orthonormal} established the analogue of Theorem \ref{onsE} on torus.
\begin{theorem}[OSE on $\mathbb {T}^{d}$ \cite{nakamura2020orthonormal}]\label{onsT} 
Let $\mathcal{M}=\mathbb{T}^{d},$ $I=\mathbb{T}$, $\theta=2, (f_{j})_{j}$ is an  orthonormal system in $L_{z}^2(\mathbb{T}^{d})$ and  $d \geq 1$ and $\lambda =(\lambda_{j})_{j} \in \ell^{\alpha'}(\mathbb Z^d)$ and $\alpha'\geq 1.$ Assume that $(p,q)$ satisfies \eqref{adc} and $(p,q,d) \neq (1, \infty, 2)$.
    \begin{enumerate}
        \item \label{onsT1} Assume that   $\sigma\in(0,\frac{1}{p_*}],$ where $p_*=\frac{d+2}{d}=p=q$. Then the estimate \eqref{onstr} holds true iff $\alpha' < \frac{d}{d-\sigma}.$ 
        \item \label{onsT1} Let  $1 \leq q <\frac{d+1}{d-1},$ i.e. $(\frac{1}{q},\frac{1}{p}) \in (A, B]$ in Figure \ref{fig:enter-label} and $\sigma=\frac{1}{p}.$  Then the estimate \eqref{onstr} holds true iff $\alpha' \leq \frac{2q}{q+1}$.
        \item Let  $d=1$ and  $(\frac1q,\frac1p)=A=(0,\frac12)$ in Figure \ref{fig:enter-label} and $\sigma= \frac{1}{2}$. Then the estimate \eqref{onstr} holds true iff $\alpha' \leq 2$.
    \end{enumerate}
\end{theorem}
 Recently  Wang, Zhang and  Zhang  in \cite{wang2025strichartz} extended Theorem \ref{onsT}  to compact manifold without boundary and also consider fractional Laplacian on it.
\begin{theorem}[fractional OSE on  $\mathbb T^d$ \cite{wang2025strichartz}]\label{onsM}
    Let $\mathcal{M}=\mathbb{T}^{d},$ $I \text{ (bounded subset of }  \mathbb{R})$, $\theta \in (0,\infty)\setminus \{1\}, (f_{j})_{j}$ be an  orthonormal system in $L_{z}^2(\mathbb{T}^{d})$ and $\lambda =(\lambda_{j})_{j} \in \ell^{\alpha'}$ and $\alpha'\geq 1.$
    Let  $(p,q)$ satisfies \eqref{adc} and \begin{eqnarray*}
    \sigma_{1}= \begin{cases}
        \frac{1}{p}& if  \quad \theta >1\\
        \frac{2-\theta}{p} &  if \quad  \theta \in (0,1)
    \end{cases}.
    \end{eqnarray*} 
    \begin{enumerate}
        \item \label{onsM1} Let $\theta>1$ and $\sigma\in(0,d]$  and  $1 \leq q \leq \frac{d+2}{d}.$ Then the estimate \eqref{onstr} holds true if $\alpha' < \frac{d}{d-\sigma}.$ 
        \item \label{onsM2} Let   $1 \leq q <\frac{d+1}{d-1},$ i.e. $(\frac{1}{q},\frac{1}{p}) \in (A, B]$ in Figure \ref{fig:enter-label} and $\sigma=\sigma_{1}.$
        Then the estimate \eqref{onstr} holds true if $\alpha' \leq \frac{2q}{q+1}$.
        \item (Critical point) Let $d \geq 2.$ On the point $(\frac{1}{q},\frac{1}{p})=(\frac{d-1}{d+1},\frac{d}{d+1})=A$ in Figure \ref{fig:enter-label} and 
         $\sigma=\sigma_{1}.$
Then the estimate \eqref{onstr} holds true if $\alpha' < \frac{p}{2}$.
\item Let $d=2,\frac{(d+1)}{d-1}<q<\infty$ or $d \geq 3, \frac{(d+1)}{d-1}<q<\frac{d}{d-2},$ i.e $(\frac{1}{q},\frac{1}{p}) \in (A, C)$ in Figure \ref{fig:enter-label} and $\sigma=\sigma_{1}.$ Then the estimate \eqref{onstr} holds true if $\alpha' \leq \frac{p}{2}.$ Moreover, if $d \geq 5,\theta>1, \frac{(d+1)}{d-1}<q<\frac{d}{d-2},$ i.e $(\frac{1}{q},\frac{1}{p}) \in (A, C),$ and $\sigma=
\frac{1}{p}.$ Then the estimate \eqref{onstr} holds true if $\alpha' < \frac{pd(d-3)}{8+pd(d-4)}.$
\item (Keel-Tao endpoint) On the point $(\frac{1}{q},\frac{1}{p})=(\frac{d-2}{d},1)=C,$ and $d \geq 3$ in Figure \ref{fig:enter-label} the estimate \eqref{onstr} holds iff  $\alpha'= 1.$ 
    \end{enumerate}
\end{theorem}

\subsection{Statement of the main results}
\subsubsection{OSE on torus}
 Our first main result gives  the fractional orthonormal  Strichartz estimates on 1D torus. Specifically, we have the following theorem.
 \begin{figure}[h]
		\centering
		\begin{tikzpicture}
			\draw[->] (0,0) -- (5,0) node[right]{$\frac{1}{q}$};
			\draw[->] (0,0) -- (0,4) node[above]{$\frac{1}{p}$};
			\coordinate (O) at (0,0);
			\coordinate (B) at (4,0);
			\coordinate (C) at (0,0.7);
			\draw[thick] (C) -- (B);
			\draw[dashed] (0,3) -- (4,3);
			\draw[dashed] (4,3) -- (4,0);
            \draw[thick] (4,0) -- (0,2);
            \draw[thick] (0,0.7) -- (1.4,1.3);
            \draw[dashed] (0,1.3) -- (1.4,1.3);
            \draw[dashed] (1.4,1.3) -- (1.4,0);
			\node at (O) [left] {$O$};
			\node at (B) [above right] {$B$};
			\node at (O) [below] {$0$};
			\node at (C) [below right] {$C$};
            \node at (0,2) [above right] {$A$};
			\node at (0,0.7) [left] {$\frac{1}{\theta}$};
            \node at (0,2) [left] {$\frac{1}{2}$};
            \node at (0,1.3) [left] {$\frac{1}{3}$};
			\node at (0,3) [left] {$1$};
			\node at (4,0) [below] {$1$};
            \node at (1.4,1.3) [above] {$D$};
            \node at (1.4,0) [below] {$\frac{1}{3}$};
            \fill (O) circle (2pt);
            \fill (B) circle (2pt);
            \fill (C) circle (2pt);
            \fill (0,2) circle (2pt);
            \fill (0,1.3) circle (2pt);
            \fill (1.4,1.3) circle (2pt);
            \fill (1.4,0) circle (2pt);
        \end{tikzpicture}
		\caption{The points $A$ to $B$ in one dimension for $\theta >3.$}
		\label{fig:2}
	\end{figure}

\begin{theorem}\label{ose}
    Let $\theta >2$ and $p,q \in [1, \infty]$ satisfies the $\theta$-admissible condition \eqref{theta admissible}, that is, $(\frac{1}{q},\frac{1}{p}) \in [C,B]$ in Figure \ref{fig:2}. 
  Let  $N > 1$, $\lambda \in \ell^{\alpha'} (\mathbb Z) $ and $(f_j)$ are a family of orthonormal functions in $L_{z}^{2}(\mathbb{T}).$  
  Then  the following estimate 
		\begin{equation}\label{IN12}
			\left\| \sum_{j} \lambda_{j} |e^{it(-\Delta)^{\frac{\theta}{2}}} P_{\leq N} f_{j}|^{2} \right\|_{L_{t}^{p}L_{z}^{q}(I \times \mathbb{T})} \lesssim_{|I|,\theta}N^{\frac{\theta-1}{p}} \| \lambda \|_{\ell^{\alpha'}}
		\end{equation}
	holds if  $$\alpha' \leq \frac{2 q}{q+1}.$$ 
  \end{theorem}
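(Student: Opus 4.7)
The plan is to invoke the Frank--Sabin duality principle, which recasts \eqref{IN12} as a Schatten-class estimate on a linear operator, and then to establish this Schatten bound at the two endpoints $B=(1,0)$ and $C=(0,1/\theta)$ of the $\theta$-admissible segment shown in Figure~\ref{fig:2}, interpolating along $[C,B]$ to cover the intermediate points.

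Setting $U := e^{it(-\Delta)^{\theta/2}}P_{\leq N}$, duality recasts \eqref{IN12} as the equivalent estimate
\[
\| W\, U U^*\, \bar W \|_{\Sp^{\alpha}(L^2_{t,x}(I\times\mathbb T))} \lesssim_{|I|,\theta} N^{2(\theta-1)/p}\,\| W \|^2_{L^{2p'}_t L^{2q'}_x}
\]
for every symbol $W(t,x)$, where $\alpha$ is the H\"older conjugate of $\alpha'$. At the trivial endpoint $B$, with $(p,q,\alpha')=(\infty,1,1)$, this is immediate from the triangle inequality and the $L^2$-unitarity of $e^{it(-\Delta)^{\theta/2}}$:
\[
\Big\| \sum_j \lambda_j |U f_j|^2 \Big\|_{L^\infty_t L^1_x} \leq \sum_j |\lambda_j| \|U f_j\|_{L^\infty_t L^2_x}^2 \leq \|\lambda\|_{\ell^1}.
\]

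The main work occurs at the nontrivial endpoint $C$, where $(p,q,\alpha')=(\theta,\infty,2)$ and duality reduces the problem to the Hilbert--Schmidt bound
\[
\| W U U^* \bar W \|_{\Sp^2}^2 \lesssim N^{2(\theta-1)/\theta}\, \|W\|_{L^{2\theta'}_t L^2_x}^4.
\]
A direct Fourier-series computation shows that $UU^*$ is the space-time convolution operator with kernel $K(t-s,\,x-y)$, where $K(\tau,z) = \frac{1}{2\pi}\sum_{|n|\leq N} e^{inz + i\tau|n|^\theta}$, so the $\Sp^2$ norm squared becomes the quadruple integral
\[
\iiiint_{(I\times\mathbb T)^2} |W(t,x)|^2 |K(t-s,x-y)|^2 |W(s,y)|^2 \, dt\,dx\,ds\,dy.
\]
To this I would apply H\"older's inequality together with Young's convolution inequality on $\mathbb R\times\mathbb T$ (distributing $|W|^2\in L^{\theta'}_t L^1_x$ and $|K|^2\in L^{\theta/2}_t L^\infty_x$), reducing the problem to the kernel-level bound $\|K\|_{L^\theta_t L^\infty_x(I\times\mathbb T)}\lesssim N^{(\theta-1)/\theta}$. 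Writing $K(t,\cdot) = e^{it(-\Delta)^{\theta/2}} D_N$ for the Dirichlet kernel $D_N$ with $\|D_N\|_{L^2}\sim N^{1/2}$, I would derive the latter by combining the classical fractional Strichartz estimate of Theorem~\ref{HT2} at an interior admissible pair $(\tilde p, \tilde q)$ with finite $\tilde q$, together with Bernstein's inequality $\|P_{\leq N} g\|_{L^\infty(\mathbb T)} \lesssim N^{1/\tilde q}\|g\|_{L^{\tilde q}(\mathbb T)}$ to reach the $L^\infty$-in-space endpoint.

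With both endpoint Schatten bounds secured, the intermediate cases $(1/q,1/p)\in(C,B)$ with $\alpha' = 2q/(q+1)\in(1,2)$ follow by Stein's complex interpolation theorem applied to Schatten classes and mixed-norm Lebesgue spaces, delivering the sharp scaling $N^{(\theta-1)/p}$ uniformly along $[C,B]$. The main technical obstacle I anticipate is the kernel estimate at $C$: for large $\theta$ the Strichartz-plus-Bernstein computation readily produces the optimal exponent $(\theta-1)/\theta$, but for $\theta$ close to $2$ the naive combination falls short, and one must invoke the improved $\ell^2$-decoupling inequality developed earlier in the paper to sharpen the $L^\theta_t L^\infty_x$ bound on the Dirichlet-type kernel $K(t,\cdot)$.
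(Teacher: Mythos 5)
Your overall framework---Frank--Sabin duality, identification of $B=(p,q,\alpha')=(\infty,1,1)$ and $C=(\theta,\infty,2)$ as the two endpoints of the $\theta$-admissible segment, and Stein interpolation between the resulting Schatten estimates---is the right skeleton, and the $B$-endpoint computation is correct. The gap is the Schatten-$2$ estimate at $C$, which you reduce to a global-in-$I$ kernel bound $\|K\|_{L^\theta_t L^\infty_x(I\times\mathbb T)}\lesssim N^{(\theta-1)/\theta}$. Your proposed route to it---Theorem~\ref{HT2} at a pair $(\theta,\tilde q)$ followed by Bernstein---forces $\tilde q=\frac{2\theta}{\theta-4}$ in order to hit the exponent $\frac{\theta-1}{\theta}$, and that is a legitimate finite exponent only when $\theta>4$; for $2<\theta\le 4$ there is no admissible choice. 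The fall-back you suggest, the paper's $\ell^2$-decoupling (Lemmas~\ref{decoupling inequality for surface on waveguided} and \ref{l2 decoupling wave}), cannot repair this: those lemmas are proved on the waveguide $\mathbb R^n\times\mathbb T^m$ rather than on $\mathbb T$, they are restricted to $p$-exponents strictly inside the Tomas--Stein range (so not $L^\infty_x$), and they inevitably carry an $N^\varepsilon$-loss that would destroy the clean exponent you need. So the argument is genuinely unbridged on all of $2<\theta\le 4$.

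The deeper structural problem is the absence of time-localization, which is the engine of the paper's proof. The dispersive bound $|K(t,x)|\lesssim |t|^{-1/\theta}$ of Proposition~\ref{l8} holds only for $|t|\le N^{-(\theta-1)}$, and on the torus one does not have any useful decay of $K$ past that scale. The paper therefore first proves an $N$-\emph{uniform} Schatten estimate on the short interval $I_N=\left[-\tfrac12 N^{-(\theta-1)},\tfrac12 N^{-(\theta-1)}\right]$ (Proposition~\ref{oseP}), obtaining the $\Sp^2$ bound there from the pointwise dispersive estimate together with Hardy--Littlewood--Sobolev (not Young---note that on $I_N$ the function $|t|^{-1/\theta}$ barely fails to lie in $L^\theta_t$, which is precisely why HLS and the analytic family $t^z K_{N,\epsilon}$ are used), obtaining $\Sp^\infty$ at $\operatorname{Re}z=-1$ via a Hilbert-transform argument, and Stein-interpolating between. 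The factor $N^{(\theta-1)/p}$ in \eqref{IN12} then appears only at the very last step, when $I$ is covered by $\approx N^{\theta-1}$ translates of $I_N$ and the pieces are summed in $L^p_t$. Attempting to build $N^{(\theta-1)/\theta}$ directly into a kernel norm over all of $I$ replaces this robust local-to-global mechanism with a strictly stronger global kernel estimate that you neither establish nor, given the breakdown of dispersion on the torus at times $|t|\gtrsim N^{-(\theta-1)}$, should expect to hold with the sharp exponent.
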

Theorem \ref{ose} generalizes  Theorem \ref{onsT}\eqref{onsT1} in the fractional setting.   We should  also note that   the range of $\theta$-admissible pairs \((p, q)\) established in Theorem \ref{ose} is not included in Theorem \ref{onsM} \eqref{onsM2} and we could cover the entire line segment \([C, B]\) as  in Figure \ref{fig:2}.   
\begin{corollary}\label{region strichartz}
    Let $\theta >2$ and $(\frac{1}{q},\frac{1}{p}) \in ABC \setminus A,$ in Figure \ref{fig:2}. Then there exist two points $(\frac{1}{q_{0}},\frac{1}{p_{0}}) \in (A,B]$ and $(\frac{1}{q_{1}},\frac{1}{p_{1}}) \in [C,B]$ such that $$(\frac{1}{q},\frac{1}{p})=(1-\tau)(\frac{1}{q_{0}},\frac{1}{p_{0}})+ \tau (\frac{1}{q_{1}},\frac{1}{p_{1}})$$ for some $\tau \in [0,1].$ Then
    \begin{equation}
			\left\| \sum_{j} \lambda_{j} |e^{it(-\Delta)^{\frac{\theta}{2}}} P_{\leq N} f_{j}|^{2} \right\|_{L_{t}^{p}L_{z}^{q}(I \times \mathbb{T})} \lesssim_{|I|,\sigma}N^{\sigma} \| \lambda \|_{\ell^{\alpha'}}
		\end{equation}
	holds for all orthonormal system $(f_{j})_{j} \subset L_{z}^{2}(\mathbb{T})$ and all sequences $\lambda \in \ell^{\alpha'}(\mathbb Z),$ and $ \sigma= (1-\tau) \frac{1}{p_{0}}+\frac{\tau(\theta-1)}{p_{1}}$ and $\alpha' \leq (1-\tau)\alpha'_{0}+\tau \alpha'_{1},$ where $\alpha'_{k}= \frac{2q_{k}}{q_{k}+1},$ for $k=0,1.$
    \end{corollary}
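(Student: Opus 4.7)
The plan is bilinear complex interpolation between the two endpoint orthonormal Strichartz estimates bordering the triangle $ABC\setminus\{A\}$: on the segment $(A,B]$ the estimate with loss $N^{1/p_0}$ and $\alpha'_0=\tfrac{2q_0}{q_0+1}$ is supplied by Theorem \ref{onsM}(2) (applied with $d=1$ and $\theta>2>1$, so that the parameter $\sigma_1$ there reduces to $1/p$), while the segment $[C,B]$ is precisely what our new Theorem \ref{ose} covers, with loss $N^{(\theta-1)/p_1}$ and $\alpha'_1=\tfrac{2q_1}{q_1+1}$.

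First I would verify the elementary geometric decomposition used in the statement: every $(1/q,1/p)\in ABC\setminus\{A\}$ lies on some segment joining a point of $(A,B]$ to a point of $[C,B]$. This is a one-line convexity observation, since these two segments share the endpoint $B$ and, together with $A$, bound the whole triangle; given a point $P$ in the triangle other than $A$, extending the ray from $A$ through $P$ meets $[C,B]$ at a unique $P_1$, and any $P_0\in (A,B]$ with $P$ on $[P_0,P_1]$ provides the needed decomposition.

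The core step is the interpolation itself. I would recast each endpoint estimate in Schatten-class form via the Frank--Sabin duality: writing $\gamma=\sum_j\lambda_j|f_j\rangle\langle f_j|$ and $W_N=e^{it(-\Delta)^{\theta/2}}P_{\leq N}$, each endpoint is equivalent to a bound of the form $\|W_N\gamma W_N^*\|_{\mathfrak{S}^{\alpha'_k}(L^{p_k}_tL^{q_k}_x)}\lesssim N^{\sigma_k}\|\gamma\|_{\mathfrak{S}^{\alpha'_k}}$. An analytic family of such operators whose boundary values reproduce the two endpoints then falls within the scope of Stein's complex interpolation theorem, in exactly the form employed in \cite{frank2014strichartz,nakamura2020orthonormal,wang2025strichartz}. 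The intermediate point at parameter $\tau$ produces a Schatten bound at the interpolated exponents $(p,q)$ with Schatten index $\alpha'_\tau$ obeying $1/\alpha'_\tau=(1-\tau)/\alpha'_0+\tau/\alpha'_1$, and with loss $\sigma=(1-\tau)/p_0+\tau(\theta-1)/p_1$, exactly as in the statement. The weighted AM--HM inequality gives $\alpha'_\tau\leq (1-\tau)\alpha'_0+\tau\alpha'_1$, so this interpolated index sits inside the range asserted by the corollary, and monotonicity of $\ell^{\alpha'}$-norms on the discrete index set then extends the estimate to every $\alpha'\leq\alpha'_\tau$, which is the content of the claim.

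The step where care is needed is the verification of analyticity and uniform boundary bounds for Stein's theorem applied to the Schatten family $W_N\gamma^z W_N^*$, and the attendant reduction from the orthonormal formulation to a $TT^*$-type operator bound. However, this is now a well-established template in the orthonormal Strichartz literature (see \cite{frank2014strichartz,nakamura2020orthonormal,wang2025strichartz}), so no new analytic idea beyond the canonical complex-interpolation machinery is required, and the corollary is an essentially formal consequence of Theorems \ref{onsM} and \ref{ose}.
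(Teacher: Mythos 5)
Your interpolation strategy is the same one the paper invokes, and it is essentially sound up to a point: the geometric decomposition of the triangle $ABC\setminus\{A\}$ into segments joining $(A,B]$ to $[C,B]$ is correct, the reduction to Schatten form via the Frank--Sabin duality (Lemma~\ref{PL1}) is the right move, and the linear interpolation of the loss exponent $\sigma=(1-\tau)\tfrac{1}{p_0}+\tau\tfrac{\theta-1}{p_1}$ follows correctly from the multiplicative behaviour of $N^{\sigma}$ under Stein interpolation.

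However, your last paragraph contains a genuine gap, and you in fact flag the very inequality that exposes it. Complex interpolation --- whether applied directly (Riesz--Thorin for the linear map $\lambda\mapsto\sum_j\lambda_j|\cdot|^2$ at fixed ONS), or via Lemma~\ref{PL5}, or via Stein interpolation for the dual Schatten family (Lemma~\ref{s-interpolation}) followed by Lemma~\ref{PL1} --- produces the interpolated estimate with the sequence exponent
\[
\frac{1}{\alpha'_\tau}=\frac{1-\tau}{\alpha'_0}+\frac{\tau}{\alpha'_1},
\]
i.e., the \emph{weighted harmonic mean} of $\alpha'_0,\alpha'_1$. (Indeed, the Schatten index $\alpha$ interpolates as $\tfrac{1}{\alpha_\tau}=\tfrac{1-\tau}{\alpha_0}+\tfrac{\tau}{\alpha_1}$, and since $\tfrac1\alpha+\tfrac1{\alpha'}=1$ this passes to the same convex relation in $1/\alpha'$.) By the AM--HM inequality you cite, $\alpha'_\tau\le(1-\tau)\alpha'_0+\tau\alpha'_1$ with strict inequality whenever $\alpha'_0\ne\alpha'_1$. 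Monotonicity of $\ell^{\alpha'}$-embeddings then gives the estimate only for $\alpha'\le\alpha'_\tau$, which is a \emph{strictly smaller} range than the asserted $\alpha'\le(1-\tau)\alpha'_0+\tau\alpha'_1$. Your closing sentence, ``which is the content of the claim,'' therefore misidentifies the claim: you have proven the estimate for $\alpha'$ up to the harmonic mean, whereas the corollary asserts it up to the arithmetic mean. Nothing in the interpolation machinery you invoke (nor in the paper's own one-line justification, which also appeals only to complex interpolation) covers the indices $\alpha'\in(\alpha'_\tau,(1-\tau)\alpha'_0+\tau\alpha'_1]$. Either the corollary's exponent range should be read with a harmonic rather than arithmetic combination, or an additional argument beyond interpolation is required; your proof supplies neither.
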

    A notable aspect of  Corollaries \ref{region strichartz} and \ref{region arbitrary strichartz} is that it provides an OSE for the fractional Schrödinger equation in 1D over a region, rather than only a line segment. To the best autors  knowledge, such a result has not been previously established on torus.
    Corollary \ref{region strichartz} is derived from Theorems \ref{ose} and  \ref{onsM} \eqref{onsM2}  through the process of complex interpolation.
\begin{corollary}\label{region arbitrary strichartz}
    Let $\theta >2$ and $(\frac{1}{q},\frac{1}{p}) \in CDB,$ in Figure \ref{fig:2}. Then there exist two points $(\frac{1}{q_{0}},\frac{1}{p_{0}}) \in [D,B]$ and $(\frac{1}{q_{1}},\frac{1}{p_{1}}) \in [C,B]$ such that $$(\frac{1}{q},\frac{1}{p})=(1-\tau)(\frac{1}{q_{0}},\frac{1}{p_{0}})+ \tau (\frac{1}{q_{1}},\frac{1}{p_{1}})$$ for some $\tau \in [0,1].$ Then
    \begin{equation}
			\left\| \sum_{j} \lambda_{j} |e^{it(-\Delta)^{\frac{\theta}{2}}} P_{\leq N} f_{j}|^{2} \right\|_{L_{t}^{p}L_{z}^{q}(I \times \mathbb{T})} \lesssim_{|I|,\sigma}N^{\sigma} \| \lambda \|_{\ell^{\alpha'}}
		\end{equation}
	holds for all orthonormal system $(f_{j})_{j} \subset L_{z}^{2}(\mathbb{T})$ and all sequences $\lambda \in \ell^{\alpha'}(\mathbb Z),$ and all $ \sigma \in (\frac{\tau(\theta-1)}{p_{1}}, \frac{\theta-1}{p_{1}}]$ and $\alpha' < \frac{2(\theta-1)}{2(\theta-1)-\sigma \theta}.$
\end{corollary}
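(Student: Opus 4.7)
The plan is to prove Corollary \ref{region arbitrary strichartz} by complex interpolation, in the same spirit as the derivation of Corollary \ref{region strichartz}. The two endpoint inputs will be Theorem \ref{ose} on the $\theta$-admissible segment $[C,B]$ and Theorem \ref{onsM} part (2) on the classical admissible segment $[D,B]\subset (A,B]$.

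First I would record the endpoint estimates. At $(\tfrac{1}{q_1},\tfrac{1}{p_1}) \in [C,B]$, Theorem \ref{ose} yields
\[
\Bigl\| \sum_j \lambda_j |e^{it(-\Delta)^{\theta/2}} P_{\leq N} f_j|^2 \Bigr\|_{L^{p_1}_t L^{q_1}_x(I \times \mathbb T)} \lesssim_{|I|,\theta} N^{(\theta-1)/p_1} \|\lambda\|_{\ell^{\alpha'_1}}, \qquad \alpha'_1 = \tfrac{2q_1}{q_1+1},
\]
and at $(\tfrac{1}{q_0},\tfrac{1}{p_0}) \in [D,B]$, Theorem \ref{onsM}(2) gives the analogous inequality with $N$-exponent $1/p_0$ and Schatten exponent $\alpha'_0 = \tfrac{2q_0}{q_0+1}$. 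Next I would recast both estimates in their dual Schatten-class form and apply Stein's complex interpolation theorem to an appropriate analytic family of operators. Interpolation at parameter $\tau$ produces an estimate at the convex combination $(\tfrac{1}{q},\tfrac{1}{p}) = (1-\tau)(\tfrac{1}{q_0},\tfrac{1}{p_0}) + \tau(\tfrac{1}{q_1},\tfrac{1}{p_1})$ with $N$-power $\sigma = (1-\tau)/p_0 + \tau(\theta-1)/p_1$ and Schatten exponent obtained by interpolating $\alpha'_0$ and $\alpha'_1$.

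The final task is to rewrite the interpolated bound in the compact form $\alpha' < \tfrac{2(\theta-1)}{2(\theta-1)-\sigma\theta}$, $\sigma \in (\tau(\theta-1)/p_1, (\theta-1)/p_1]$, claimed in the corollary. The key algebraic identity is that on $[C,B]$ one has $\tfrac{1}{q_1} = 1 - \theta/p_1$, so the stated right-hand side reduces to $\alpha'_1$ exactly at $\sigma = (\theta-1)/p_1$; for $\sigma$ strictly above $\tau(\theta-1)/p_1$, the $[D,B]$ endpoint contributes the nontrivial portion of the Schatten gain through the interpolation. The main obstacle I anticipate is the careful bookkeeping of the Schatten exponent through Stein's theorem, together with the conversion of its closed-range output into the half-open, strict-inequality form stated in the corollary; this last point will likely require a small $\epsilon$-loss or a limiting perturbation of the interpolation parameters in order to trade the equality at the $[C,B]$ endpoint for the open condition $\sigma > \tau(\theta-1)/p_1$.
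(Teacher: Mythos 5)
There is a genuine gap: you have selected the wrong endpoint. The paper explicitly states that Corollary~\ref{region arbitrary strichartz} is derived from Theorem~\ref{ose} together with part~(1) of Theorem~\ref{onsM}, whereas part~(2) is the ingredient for Corollary~\ref{region strichartz}. This is not merely a citation issue — it is structurally essential. Part~(2) of Theorem~\ref{onsM} gives, at a point $(\tfrac{1}{q_0},\tfrac{1}{p_0})\in[D,B]$, a \emph{single} estimate with the fixed derivative loss $\sigma_0 = 1/p_0$ and Schatten exponent $\alpha'_0 \leq \tfrac{2q_0}{q_0+1}$. Interpolating this against Theorem~\ref{ose} at parameter $\tau$ therefore produces only the \emph{one} value $\sigma = (1-\tau)/p_0 + \tau(\theta-1)/p_1$, as you yourself write. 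But Corollary~\ref{region arbitrary strichartz}, for the fixed decomposition $(p_0,q_0,p_1,q_1,\tau)$, asserts the estimate for \emph{every} $\sigma$ in the open-closed interval $\left(\tfrac{\tau(\theta-1)}{p_1},\tfrac{\theta-1}{p_1}\right]$, with the Schatten threshold $\alpha' < \tfrac{2(\theta-1)}{2(\theta-1)-\sigma\theta}$ varying continuously with $\sigma$. Your scheme has no parameter to sweep in order to realize this one-parameter family; the $\epsilon$-loss or perturbation you gesture at in the final paragraph cannot supply it, because the obstruction is not at the level of strict versus non-strict inequality but at the level of having a single $\sigma$ rather than an interval.

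Part~(1) of Theorem~\ref{onsM} is precisely what furnishes the missing free parameter. In dimension $d=1$ on the segment $[D,B]$ (classical admissibility, $1\leq q_0 \leq 3$), part~(1) gives, for \emph{every} $\sigma_0 \in (0,1]$, the bound with $\alpha'_0 < \tfrac{1}{1-\sigma_0}$. Interpolating this family against the Theorem~\ref{ose} estimate at $(\tfrac{1}{q_1},\tfrac{1}{p_1})\in[C,B]$ yields, as $\sigma_0$ ranges over $(0,1]$, the whole interval of $\sigma = (1-\tau)\sigma_0 + \tfrac{\tau(\theta-1)}{p_1}$; the constraint $\sigma \leq \tfrac{\theta-1}{p_1}$ in the corollary is then a (safe) restriction ensuring $\sigma_0 \leq 1$ and the strict inequality in $\alpha'$ is inherited directly from the strict inequality $\alpha'_0 < \tfrac{1}{1-\sigma_0}$. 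One then verifies algebraically, using $\tfrac{\theta}{p_1}+\tfrac{1}{q_1}=1$ and $\tfrac{1}{\alpha'_1}=1-\tfrac{\theta}{2p_1}$, that the interpolated Schatten threshold dominates $\tfrac{2(\theta-1)}{2(\theta-1)-\sigma\theta}$ on the stated $\sigma$-range, so the claimed condition on $\alpha'$ is indeed captured. To repair your argument, replace the appeal to Theorem~\ref{onsM}(2) by Theorem~\ref{onsM}(1) and carry the free parameter $\sigma_0$ through Stein interpolation.
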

Corollary \ref{region arbitrary strichartz} differs from Corollary \ref{region strichartz} in several ways. For instance, the region in Corollary \ref{region strichartz} is larger than that in Corollary \ref{region arbitrary strichartz}. To compare them within a common region, consider any point \((\frac{1}{q},\frac{1}{p}) \in CDB\) as  in Figure \ref{fig:2}. There exist two points, \((\frac{1}{q_{0}},\frac{1}{p_{0}}) \in [D,B]\) and \((\frac{1}{q_{1}},\frac{1}{p_{1}}) \in [C,B]\), such that $$(\frac{1}{q},\frac{1}{p}) = (1-\tau)(\frac{1}{q_{0}},\frac{1}{p_{0}}) + \tau (\frac{1}{q_{1}},\frac{1}{p_{1}}),$$ 
for some \(\tau \in [0,1].\) Furthermore, we can always find a \(\sigma\) such that
$$\frac{\tau(\theta-1)}{p_{1}} < \sigma < (1-\tau)(\frac{1}{q_{0}},\frac{1}{p_{0}}) + \tau (\frac{1}{q_{1}},\frac{1}{p_{1}}),
$$ and this \(\sigma\) will satisfy Corollary \ref{region arbitrary strichartz}. This indicates that the derivative loss in Corollary \ref{region arbitrary strichartz} can be smaller than that in Corollary \ref{region strichartz}, i.e,  we can choose derivative loss arbitrary close to $\frac{\tau(\theta-1)}{p_{1}},$ which we cannot do in Corollary \ref{region strichartz}. Corollary \ref{region arbitrary strichartz} is derived from Theorem \ref{ose} and  Theorem  \ref{onsM} \eqref{onsM1} through the process of complex interpolation.
Thus Theorem \ref{ose} and Corollaries \ref{region strichartz} and \ref{region arbitrary strichartz} complements Theorems \ref{onsT} and \ref{onsM}.

\begin{remark}  The key tool to prove Theorem \ref{ose} is the dispersive estimate established in Proposition \ref{l8}. This  generalizes  the classical result due to Kenig-Ponce-Vega  \cite[(5.9)]{Kenig_1991_OsciDis}. The dimension restriction in  Theorem \ref{ose} comes due to this proposition.
\end{remark}

\subsubsection{Decoupling inequalities and fractional Strichatz estimates on waveguide manifold}\label{cstwg} \

 In next   Theorem \ref{diagonal or nondiagonal for single stri} we establish  Strichartz estimates for a single function, which will be useful to obtain OSE in   Corollary \ref{arbitrary loss ons}. In order  to prove  Theorem \ref{diagonal or nondiagonal for single stri},  we  shall prove new  decoupling inequalities  for  phase function $\phi(\xi_1, \xi_2)= |\xi_1|^{\theta} + |\xi_{1}|^{\theta}$ (degeneracy type) on $\mathbb R^d$ and on waveguide manifold.  See Sections \ref{decoupling sec on R} and  \ref{decoupling sec on wave} below for a precise statements and motivation. This maybe of independent interest to the reader.

\begin{theorem}\label{diagonal or nondiagonal for single stri}
Let \(d=m+n \geq 1\), \(N >1\). Suppose \(p \geq 2\) and \(\frac{1}{p} = \frac{d}{2}\left(\frac{1}{2} - \frac{1}{q}\right)\) and \(f \in L_z^{2}(\mathbb{R}^n \times \mathbb{T}^m)\). Moreover,
\begin{itemize}
    \item for  \(\theta \geq 2\), assume that 
    \begin{equation}\label{derivative loss for arbitrary}
\sigma_{1} = \begin{cases}
0 & if \quad  2 \leq q \leq \frac{2(d+2)}{d} \\
\frac{d}{2} - \frac{d+2}{q} & if \quad \frac{2(d+2)}{d} \leq q \leq \infty
\end{cases},
\end{equation}
\item for $1< \theta<2,$ assume that $n\geq2, $  $f$  is radial in $\mathbb{R}^n$ and \begin{equation}\label{derivative loss for arbitrary 1< theta <2}
\sigma_{1} = \begin{cases}
(2-\theta) \frac{d}{2}(\frac{1}{2}-\frac{1}{q}) & if \quad  2 \leq q \leq \frac{2(d+2)}{d} \\
\frac{d}{2} - \frac{d+\theta}{q} & if \quad \frac{2(d+2)}{d} \leq q \leq \infty
\end{cases}.
\end{equation}
\end{itemize}
Then,  for \(\varepsilon > 0\), we have 
\begin{equation}\label{arbitrary loss for wave guide for q}
\|e^{it(-\Delta)^{\frac{\theta}{2}}} P_{\leq N}f\|_{L^{q}_{t,z}(I \times \mathbb{R}^n \times \mathbb{T}^m)} \lesssim_{|I|, \varepsilon} N^{\sigma_{1}+\varepsilon}\|f\|_{L_z^{2}(\mathbb{R}^n \times \mathbb{T}^m)}
\end{equation}
and \begin{equation}\label{arbitrary loss for wave guide for pq}
\|e^{it(-\Delta)^{\frac{\theta}{2}}}P_{\leq N}f\|_{L^{p}_{t}L^{q}_{z}(I \times \mathbb{R}^n \times \mathbb{T}^m)} \lesssim_{|I|,\varepsilon} N^{\sigma_{1}+\varepsilon}\|f\|_{L_z^{2}(\mathbb{R}^n \times \mathbb{T}^m)}.
\end{equation}
\end{theorem}
The case \(\theta = 2\) was previously established by Barron \cite[Proposition 3.4]{BarronAlex}. In contrast, the more general non-degenerate phase function, which is away from the origin, has been studied by Sire-Yu-Yue-Zhao in \cite{sire2023}. In Theorem~\ref{diagonal or nondiagonal for single stri}, a radiality assumption on the spatial variable is imposed to prevent additional derivative loss (see \cite[Theorem 1.5(b)]{guo2014improved}). Theorem \ref{diagonal or nondiagonal for single stri} complements Theorem \ref{TIN1}.

\subsubsection{Orthonormal Strichatz estimates on waveguide manifold}
We are now ready to  state OSE  on  the waveguide manifold.

\begin{theorem}\label{Strichartz}
    Let $d=n+m \geq 1, \theta \in (0,\infty) \setminus \{1, 2\}, N> 1.$ Suppose $\frac{d+1}{d} < p \leq \infty,1 \leq q < \frac{d+1}{d-1}$ and $\frac{2}{p}+\frac{d}{q}=d,$ i.e $(\frac{1}{q},\frac{1}{p}) \in (A,B]$ in Figure \ref{fig:enter-label}. Let 
\begin{eqnarray}\label{hn}
    \sigma=\begin{cases}
(1+\frac{n(\theta-2)}{m+n})\frac{1}{p} & if \quad \theta>3+\frac{m}{n} \\
\frac{1}{p} & if  \quad  \theta =2\\
    \frac{2}{p} & if \quad 1< \theta \leq 3+\frac{m}{n}, \theta \neq 2 \\
    \frac{2(2-\theta)}{p} & if \quad    \theta \in (0,1)
\end{cases}.
\end{eqnarray}

Then 
\begin{equation}\label{stri}
		\left\| \sum_j \lambda_j |e^{it(-\Delta)^\frac{\theta}{2}} P_{\leq N}f_j|^2 \right\|_{L_t^p(I,L_z^q(\mathbb{R}^n \times \mathbb{T}^m))} \lesssim_{|I|} N^{\sigma} \| \lambda \|_{\ell^{\alpha'}},
	\end{equation}
   holds for all orthonormal system $(f_{j})_{j}$ in $L^{2}_{z}(\mathbb{R}^n \times \mathbb{T}^m)$ if $\alpha' \leq \frac{2q}{q+1}.$ 
\end{theorem}

Theorem~\ref{Strichartz} is new in the setting of the waveguide manifold 
 and complements Subsection \ref{cstwg} and Theorems \ref{onsE} and \ref{onsT}. In  this setting  the derivative loss $N^{\sigma}$ depends    on the order of dispersion $\theta$ (except certain range); while it is  $N^{1/p}$ in the  setting of torus, see Theorem~\ref{onsM}\eqref{onsM2}. The range of $\alpha$ and $(A,B]$ as shown in Figure~\ref{fig:enter-label} remain same as in Theorems~\ref{onsE}\eqref{onsE1} and \ref{onsT}\eqref{onsT1}.  The point $A$ which corresponds to the endpoint of the desired inequality is  excluded due to   Hardy-Littlewood-Sobolev inequality.

\begin{remark} The key step in the proof is to obtain the  kernel estimate. For $\theta \neq 2,$ we use  localization procedure via the Littlewood-Paley decomposition on both $\mathbb{R}^n$ and $\mathbb{T}^m$  and by subdividing the time interval into smaller subintervals.  Then we invoke Lemma \ref{dispersive on R}   to obtain the necessary kernel bounds (see \eqref{kernel for waveguide}-\eqref{kernel for waveguide 3}).

While for $\theta =2,$ we use the fact that the higher-dimensional kernel can be expressed as a product of 1D kernels (Proposition \ref{l8} and Lemma \ref{gle} \eqref{l1}).
 Here we note that the improvement in derivative loss arises from the fact that the proof does not rely on the Littlewood-Paley decomposition.
\end{remark}

\begin{remark}  Theorem~\ref{Strichartz} also holds true for the supercritical regime and critical point. See Figure \ref{fig:enter-label}. In fact,  the  inequality \eqref{stri} holds true by using Lemma \ref{dispersive on R} for small time interval at the Keel–Tao endpoint (see \cite{KeelTao}), given by $(p,q) = (1, \tfrac{d}{d-2})$ for $d \ge 3$, with $\alpha' = 1$.
Then in  the supercritical regime and critical point, the inequality \eqref{stri} follows ($\alpha'<p$) from Theorem~\ref{Strichartz} (subcritical regime) and is obtained by interpolation between the Keel-Tao endpoint $(p,q) = (1, \tfrac{d}{d-2})$ (with $\alpha' = 1$) and points arbitrarily close to $(p,q) = (\tfrac{d+1}{d}, \tfrac{d+1}{d-1})$ (with $\alpha' = \tfrac{2q}{q+1}$). See \cite{feng2024orthonormal} and \cite{frank2016stein}. 
\end{remark}

\begin{corollary}\label{arbitrary loss ons}
    Let \(d=m+n \geq 1\), \(N > 1\). Suppose \(1 \leq q \leq \frac{d+2}{d}\) and $\frac{2}{p}+\frac{d}{q}=d.$ Let 
$$\sigma_{1}=\begin{cases}
(1+\frac{n(\theta-2)}{m+n})\frac{1}{p} & if \quad \theta>3+\frac{m}{n} \\
    \frac{2}{p} & if \quad 2 < \theta \leq 3+\frac{m}{n}
\end{cases},$$
and $$\kappa=\begin{cases}
1+\frac{n(\theta-2)}{m+n} & if \quad \theta>3+\frac{m}{n} \\
    2 & if \quad 2 < \theta \leq 3+\frac{m}{n}
\end{cases}.$$
Then
\begin{equation}
\Big\|\sum_{j}\lambda_{j}|e^{it(-\Delta)^{\frac{\theta}{2}}}P_{\leq N}f_{j}|^{2}\Big\|_{L^{p}_{t}L^{q}_{z}(I \times \mathbb{R}^n \times \mathbb{T}^m)} \lesssim_{|I|, \sigma} N^{\sigma}\|\lambda\|_{\ell^{\alpha'}}
\end{equation}
holds for all orthonormal systems \((f_{j})_{j} \subset L_z^{2}(\mathbb{R}^n \times \mathbb{T}^m)\) and all sequences \(\lambda = (\lambda_{j})_{j} \in \ell^{\alpha'}\), and all \(\sigma \in (0,\sigma_{1}]\) and \(\alpha' < \frac{d}{d-\frac{\sigma}{\kappa}}.\)
\end{corollary}

In Corollary~\ref{arbitrary loss ons}, the exponent \(\alpha'\) is dependent on \(\theta\). As \(\theta\) increases, the corresponding value of \(\alpha'\) decreases. Additionally, as derivative loss $\sigma$ increases, the value of \(\alpha'\) also increases.

\section{Hartree Eqs. with infinitely many  particles}\label{Hinf}
\subsection{Motivation and description of the problem}
  There are two fundamental types of elementary particles in nature,  bosons and fermions. The  finite system of $M$ fermions (bosons) with mean field interactions involving a pair potential $w: \M \to \mathbb R$  is described by $M$ \textbf{coupled Hartree equations}:
\begin{equation}\label{HN}
\begin{cases} i\partial_t u_j = ( ( - \Delta )^{\frac{\theta}{2}} + w \ast \rho) u_j\\
u_{j|t=0}=f_{j}
\end{cases} \quad (t, z)\in I \times \mathcal{M}
\end{equation}
where $j=1,2,..., M.$ To account for the Pauli principle, the family  $(f_{j})_{1}^{M}$  assumed to be an  orthonormal system (ONS for short) in $L_{z}^2(\M)$.   Here,  $\rho$ is the total density  functions of particles defined by
\begin{eqnarray}
\rho(t,z)= \sum_{k=1}^{M}|u_k(t,z)|^2.
\end{eqnarray}
We introduce the  one-particle density matrix corresponding to  \eqref{HN} as
\begin{eqnarray*}
    \gamma_M (t) = \sum_{j=1}^M | u_j(t)\rangle \langle u_j(t) |,
\end{eqnarray*}
where  $|u \rangle \langle v|$ denotes the operator $f\mapsto \langle v, f\rangle u$ from $L^2$ into itself.  This  corresponds to the rank$-M$ orthogonal projection onto the span of the orthonormal family $\{f_{j}\}_{j=1}^M.$ Then system \eqref{HN} is  equivalent  to  a single operator-valued equation \footnote{This means that the solution $\gamma(t)$ to \eqref{oHN} stays of  finite rank for all times, with $\gamma(t)=  \sum_{j=1}^M|u_j(t)\rangle \langle u_j(t)|,$ where $u_j$ satisfies \eqref{HN}. We note that  taking $M=1$ or rank-one operator $\gamma_1(t=0)=|f_1\rangle \langle f_1|$ in   \eqref{HN}  and \ref{oHN} respectively, we recover   classical  Hartree  equation.}
\begin{equation}\label{oHN}
    \begin{cases}
        i \partial_t\gamma_{M} = [(-\Delta)^{\frac{\theta}{2}}+ w \ast \rho_ {\gamma_M}, \gamma_M]\\
 \gamma_M(t=0) = \sum_{j=1}^M | f_{j}\rangle \langle f_{j} |
    \end{cases},
\end{equation}
where  $[X, Y]=XY-YX$ is the operator commutator.  The  density function is given by 
\[\rho_{\gamma_M} (t,z)= \gamma_{M} (t, z,z). \]
The convolution $w \ast \rho_ {\gamma_M}$ represents the self-generated potential  which acts  a multiplication operator  in \eqref{oHN}.
More generally, we consider trace class operator $\gamma_0$, i.e. $\operatorname{Tr} |\gamma_0|= \sum_j |\lambda_j|< \infty;$
in this  case spectral decomposition  gives
\[\gamma_0 = \sum_{j} \lambda_j |f_{j}\rangle \langle f_{j}|.\]
This corresponds to a dilute gas with a finite density (for e.g, with $\rho(t,z)= \sum_{j=1}^M |u_j(t,z)|^2$ as $M\to \infty$ or $\rho(t,z)= \sum_{j=1}^{\infty} \lambda_j |u_j (t,z)|^2, \lambda_j >0$ and $\sum \lambda_j=1)$) and  \eqref{HN} is equivalent to  the following system of coupled equations:
\begin{equation}\label{fds}
     i\partial_t u_j = \left( ( - \Delta )^{\frac{\theta}{2}} + w \ast (\sum_{k} \lambda_k |u_k|^2)\right) u_j, \quad 
u_{j|t=0}=f_{j}, \ 
 j \in \mathbb N.
\end{equation}

In fact, the derivation of \eqref{fds} arises  from a many-body quantum system in the mean-field limit
(e.g.,  \cite{BenjaminCMP2014} \cite[Theorem 3.4]{Jacky}), and the global-in-time Cauchy problem  \eqref{fds}  has been resolved for trace class
initial data with finite energy in \cite{Bove, ChenAmieTori,BhiHaq,BhiGriOko}

On the other hand,  it is possible that  the one-particle density matrix $\gamma = \sum_{j=1}^{\infty} | u_j \rangle \langle u_j|$ is not of trace class; however, it has a bounded operator norm  $L^{2}_{z}(\M) \to L^{2}_{z}(\M).$  We note that  the analysis of the dynamics of \eqref{oHN} is much more difficult when $\gamma_0$ is not of trace class.  While this situation arises in various physical phenomena, e.g. Femi gas at zero/positive temperature, Bose
gas at positive temperature, and Boltzmann gas at positive temperature, see \cite{lewin2014hartree,lewin2015hartree}. Taking this consideration into account, Lewin and Sabin \cite{lewin2014hartree,lewin2015hartree} introduced suitable Schatten-Sobolev spaces  $\Sp^{p,s}$(to be define in Section \ref{schp}) to treat the non-trace class data $\gamma_0$. See also \cite{chen2017global,chen2018scattering}. We shall see that key ingredient to treat non trace class data is  the OSE of  the form \eqref{onstr}.
\subsection{Application to  well-posedness} We consider fractional Hartree  equation 
\begin{equation}\label{HEInP}
 \begin{cases}
i \partial_t\gamma = [(-\Delta)_{\M}^{\frac{\theta}{2}}+ w \ast \rho_\gamma, \gamma]\\
\gamma|_{t=0}=\gamma_0
\end{cases},
\end{equation}
describing the mean-field dynamics of an interacting gas containing infinitely many fermions in the $\mathcal{M}= \mathbb T$ (tori) or $\mathbb R^n \times \mathbb T^m$ (waveguide manifold).  
 Here,  the unknown $\gamma=\gamma(t)$ is a bounded self-adjoint operator on $L^2(\M)$
representing the one-particle density matrix and $\rho_\gamma:\mathbb R \times \M \to \mathbb R$ is the  scalar density function  associated to $\gamma(t),$ formally defined by 
$$\rho_\gamma(t, z)=\gamma (t, z,z),$$
where $\gamma (t, x, x')$ denote (with the abuse of notation) the integral kernel of operator $\gamma(t),$ i.e.,
\[\gamma (t) \phi (x) = \int_{\mathcal{M}} \gamma (t, x, x') \phi (x') dx'.\]The interaction potential  $w$ is in  Besov space $ B^{s}_{q', \infty}(\mathcal{M}) $  (to be define in  Subsection \ref{Bs} below). Some typical examples of $w:$
\[w(x) = |x|^{-a} \in B^{s}_{q', \infty}(\mathbb T^d), \quad  (0<a <d,  \ a\leq \frac{d}{q'}-s),\]
where $|x|^{-a}$ should be understood as  a periodic polynomial.
Similarly \[w(z) = |z|^{-a} \in B^{s}_{q', \infty}(\mathbb R^n \times \mathbb T^m), \quad  ( \frac {n}{q'}<a <n+m,  \ a\leq \frac{n+m}{q'}-s),\]

We are now ready to state our well-posedness result. To this end, assume that pair \(\left( \frac{1}{q}, \frac{1}{p} \right) \) satisfies the following conditions
\begin{eqnarray}\label{LWAss}
     \left( \frac{1}{q}, \frac{1}{p} \right) \in\begin{cases} 
    (C,B)  \   \text{in Figure \ref{fig:2}} & for \ \theta >2, \M= \mathbb T, \sigma= \frac{\theta-1}{2p}\\
    (A,B)  \   \text{in Figure \ref{fig:enter-label}} & for \ \theta \in (0, \infty)\setminus \{1\}, \M= \mathbb R^n \times  \mathbb T^m, \sigma \  as  \ in  \ \eqref{hn}.
    \end{cases}
\end{eqnarray}

\begin{theorem}\label{TIN7}
Let  pair \(\left( \frac{1}{q}, \frac{1}{p} \right) \) be as in \eqref{LWAss} with $1 <q< \infty,$  \(s > \frac{\sigma}{2}\), and  $w\in B^{s}_{q', \infty}(\mathcal{M}) $ (Besov space). Assume that \(\gamma_{0} \in \Sp^{\frac{2 q}{q+1}, s} (L^2_z (\M))\) (Schatten space). Then 

	\begin{enumerate}
	    \item \label{TIN71} (local well-posedness) there exist 
			\(
			{\small T = T\left(\| \gamma_{0} \|_{\Sp^{\frac{2 q}{q+1}, s}}, \| w \|_{B^{s}_{q', \infty}} \right) > 0}
			\)
			and a unique solution 
			\[
			\gamma \in C([0, T], \Sp^{\frac{2 q}{q+1}, s} (L^2 (\M)))
			\]
			satisfying \eqref{HEInP} on \([0, T] \times \M\) whose density 
			\(
			\rho_\gamma \in L_{t}^{p} L_{z}^{q} ([0, T] \times \M).\)
   \item \label{TIN72} (small data global well-posedness) For any $T >0,$ there exists a small ${\small R_{T}=R_{T}\left(\| w \|_{B^{s}_{q', \infty}}\right)}$ $>0$ such that if $\| \gamma_{0} \|_{\Sp^{\frac{2 q}{q+1}, s}} \leq R_{T},$ then there exists a unique  solution $$\gamma \in C([0, T], \Sp^{\frac{2 q}{q+1}, s} (L^2 (\M))),$$ satisfying \eqref{HEInP} on $[0,T] \times \M$ whose density $\rho_{\gamma} \in L_{t}^{p} L_{z}^{q} ([0, T] \times \M).$
   \end{enumerate}
	\end{theorem}
The key tool to prove Theorem \ref{TIN7} is our OSE established in  Theorems  \ref{ose} and \ref{Strichartz}. It is notable that though  there are  many authors have studied NLS and Hartree  equation on waveguide manifold (see e.g., \cite{Takaoka2001, Deng2024JFA}), Theorem \ref{TIN7} is the first result  for the  infinite system of Hartree equations (with non trace class data) on waveguide manifold.

Now we briefly mention the history.  Frank--Sabin \cite{frank2017restriction} and 
Lewin and Sabin \cite{lewin2015hartree, lewin2014hartree}  initiated the study of \eqref{HEInP} and obtained some  local and global existence in some suitable Schatten space   under  some strong assumption on  the interaction potential  $w$. Later, Chen, Hong and Pavlovi\'c in \cite{chen2017global, chen2018scattering} treated  the more singular delta interaction case  $w=\delta$. While Bez, Lee and Nakamura  \cite[Proposition 10]{nealbez2021} considered potential $w\in B^{s}_{q, \infty}(\mathbb R^n)$ for some $s,q$. Recently Hadama and Hong \cite[Theorem 1.2]{HadamaJFA2025} considered interaction potential $w$ as a finite measure on $\mathbb R^3$. For the global well-posedness (without any smallness assumption on initial data), we refer to \cite[Theorem 2.18]{HoshiyaJMP2025}.  On torus, 
Nakamura \cite[Theorem 1.7]{nakamura2020orthonormal} studied the case $\theta=2$ and $\omega(x) = |x|^{-a}$ ; and recently Wang--Zhang--Zhang \cite[Section 6]{wang2025strichartz} extended to other dispersion $\theta$ and on   compact Riemannian manifolds without boundary. We note that  waveguide manifold case is not covered in  the literature so far. Thus, Theorem \ref{TIN7} complement these ongoing investigation in this direction.  

\begin{remark}
    The point $B$ is excluded because of H\"ormander–Mikhlin Theorem $(1<q<\infty).$  See \eqref{WL2}.
\end{remark}
	
\section{Preliminaries}\label{pre}
\subsection{Notations}\label{notation}
We write \( A \lesssim B \) to denote that there exists a constant \( C > 0 \) such that \( A \leq C B \). Similarly, we use \( A \lesssim_u B \) to indicate that there exists a constant \( C(u) > 0 \), depending on \( u \), such that \( A \leq C(u) B \).
We write \( A \sim B \) if both \( A \lesssim B \) and \( B \lesssim A \) hold.
The notation \( A \approx B \), though less rigorous, will sometimes be used to indicate that \( A \) and \( B \) are essentially of the same size, up to small and insignificant error terms.
For any two functions $f$ and $g,$ when we write
\[
f(x) = O\big(g(x)\big) \quad \text{as } x \to a,
\]
it means there exist constants $\delta > 0$ and $M > 0$ (where \( f \) and \( g \) are defined) such that
\[
  \quad |f(x)| \leq M |g(x)|, \text{ if } 0 < |x - a| < \delta.
\]
We use usual Lebesgue spaces $L_{z}^p:=L^{p}_{z}{(\M)}$ and $L_{t}^{p}L_{z}^{q}(I \times \M)=L_{t}^{p}(I,L_{z}^{q}(\M)),$ where $I$ is a bounded subset of $\mathbb{R}$; if not, we will mention it. 
Let us denote \begin{equation*}
    z=\begin{cases}
      x \in \mathbb{R}^d & if \quad z \in \mathbb{R}^d \\
      x \in \mathbb{T}^d & if \quad z \in \mathbb{T}^d \\
      (x,y) \in \mathbb{R}^n \times \mathbb{T}^n & if \quad z \in \mathbb{R}^n \times \mathbb{T}^m
    \end{cases},
\end{equation*}
and if $\xi \in \mathbb{R}^n \times \mathbb{Z}^n$ then $\xi=(\xi_{1},\xi_{2}),$ where $\xi_{1} \in \mathbb{R}^n$ and $\xi_{2} \in \mathbb{Z}^m.$ \\
We define the Fourier transform on $\mathcal{M}$ as follows:
$$(\mathcal{F}f)(\xi)=\widehat{f}(\xi)=\int_{\mathcal{M}} f(z) e^{-2 \pi i z \cdot \xi}dz, \quad \xi \in \widehat{\mathcal{M}}.$$
Let $\eta_{1}: \mathbb{R} \to [0,1]$ be a smooth even function satisfying 
\begin{equation*}
   \eta_{1}(x)= \begin{cases}
        1 &  if \quad \abs{x} \leq 1 \\
       0  &  if \quad \abs{x} \geq 2
    \end{cases}.
\end{equation*}
Then we form $\eta^{n+m}$ on $\mathbb R^{n+m}= \mathbb R^d$ as follows:
 \[\eta^{d}(\xi)=\eta_{1}^{n}(\xi_1)\eta_{1}^{m}(\xi_2)=\eta_{1}(\xi'_{1})\eta_{1}(\xi'_{2}) \cdots \eta_{1}(\xi'_{n+m})\]
and for $\xi \in \mathbb{R}^n \times \mathbb{R}^m,$ we take 
$\eta(\xi) = \mathbf{1}_{S_{d,1}}(\xi)$ or $\eta^d(\xi)$; 
in the proof, we will specify which choice of $\eta$ is being used, where $S_{d,1}$ to be defined below in \eqref{rdp}.
Let $N \in 2^{\mathbb{N}}$ be a dyadic integer.
Now we will define Littlewood-Paley projectors on $\M$ by 
$$\widehat{P_{1}f}(\xi)= \eta(\xi) \widehat{f}(\xi), \quad \xi \in \hM,$$
$$\widehat{P_{\leq N}f}(\xi)= \eta\left(\frac{\xi}{N}\right) \widehat{f}(\xi), \quad \xi \in \hM,$$
and 
$$P_{N}f=P_{\leq N}f-P_{\leq \frac{N}{2}}f,$$
where $P_{\leq \frac{1}{2}}=0.$
Let $\varphi \in C_0^\infty(\mathbb{R} \setminus \{0\}).$ Then for any Schwartz function $f$ on  $\mathcal{M}$ the operator $\varphi(h\sqrt{\Delta})$ defined as follows:
$$\varphi(h\sqrt{-\Delta})f(x)={\left(\varphi(h \abs{\xi})\widehat{f}(\xi) \right)}^{\vee}(x) \quad (\xi \in \widehat{\mathcal{M}} , h \in [0,1) ).$$
\subsection{Besov spaces} \label{Bs}
Now for $s \in \mathbb{R}$ and $p \in [1, \infty]$,
\[
B^s_{p, \infty}(\M) = \left\{ f : \|f\|_{B^s_{p, \infty}(\M)} < \infty \right\},
\]
where
\[
\|f\|_{B^s_{p, \infty}(\M)} := \sup_{N \in 2^\mathbb{N} } N^{s} \|P_N f\|_{L^p(\M)}.
\] 
Properties of Besov space (denote $B^s_{p, \infty}:=B^s_{p, \infty}(\M)$):
\begin{enumerate}
    \item
Let $1 \leq p_1 \leq p_2 \leq \infty.$
Then, for any real number $s,$ the space $B^{s}_{p_1, \infty}$ is continuously embedded in 
$B^{s - d\left(\frac{1}{p_1} - \frac{1}{p_2}\right)}_{p_2, \infty}.$
\item A constant $C$ exists which satisfies the following properties. 
If $s_1$ and $s_2$ are real numbers such that $s_1 < s_2,$ 
$\theta \in \, ]0,1[,$ and $p \in [1,\infty],$ then we have
\[
\|u\|_{B^{\theta s_1 + (1-\theta)s_2}_{p,\infty}} 
\leq 
\|u\|^{\theta}_{B^{s_1}_{p,\infty}} 
\|u\|^{1-\theta}_{B^{s_2}_{p,\infty}}
\]
and
\[
\|u\|_{B^{\theta s_1 + (1-\theta)s_2}_{p,1}} 
\leq 
\frac{C}{s_2 - s_1} 
\left( 
\frac{1}{\theta} + \frac{1}{1-\theta} 
\right)
\|u\|^{\theta}_{B^{s_1}_{p,\infty}} 
\|u\|^{1-\theta}_{B^{s_2}_{p,\infty}}.
\]
\item Let $a$ be in $]0,d[.$ For any $p$ in $[1,\infty],$ the function $|\,\cdot\,|^{-a}$ 
belongs to $B^{\frac{d}{p}-a}_{p,\infty}.$
\end{enumerate}
The proofs of these properties for $\M = \mathbb{R}^d$ can be found in \cite[Chapter~2]{bahouri2011basic}, and with similar calculations, one can show that they also hold for $\M = \mathbb{T}^d$ and $\M = \mathbb{R}^n \times \mathbb{T}^m$.
\subsection{Schatten spaces $\Sp^{\alpha}$} \label{schp}
    Denote Hilbert space $$\mathcal{H}= L_{t}^2 L_{z}^2(I \times \mathcal{M})$$ where $I \subset \mathbb R$  is  a time  interval. Let $\mathcal{B}(\cH)$ denotes the space of all bounded operators on $\cH.$ The \textbf{Schatten space} $\Sp^{\alpha }(\cH) \ (\alpha \in [1, \infty))$ consists of all compact operators $A \in \mathcal{B}(\cH)$ such that $\operatorname{Tr}|A|^{\alpha} < \infty,$ where $|A| = \sqrt{A^* A}, A^* \text{ is adjoint of }A$.   The  Schatten space norm is  given by 
$$\|A\|_{\Sp^{\alpha}}=\|A\|_{\Sp^{\alpha}(\cH)} = (\operatorname{Tr}|A|^{\alpha})^{\frac{1}{\alpha}}.$$ 
 The Schatten space norm coincides with the operator norm for  $\alpha = \infty,$ 
	 thus we  define $$\|A\|_{\Sp^{\infty}}=\|A\|_{\Sp^{\infty}(\cH)} = \|A\|_{\cH\to \cH}.$$ We may introduce regularity index $s$ in the spirit of Sobolev spaces in the realm of Schatten spaces.  Specifically,  we introduce  Sobolev-Schatten space $ \Sp^{\alpha, s}(\cH) \ (s \in \mathbb{R})$, 
	  by the norm
	\[ \|A\|_{\Sp^{\alpha, s}}=
	\|A\|_{\Sp^{\alpha, s}(\cH)} = \|\langle D \rangle^s A \langle D \rangle^s\|_{\Sp^{\alpha}(\cH)},
	\]
	where $\langle D \rangle^s$ is the inhomogeneous derivative and $\langle D \rangle^s \phi = \left((1 + |\eta|^2)^{\frac{s}{2}} \hat{\phi}\right)^\vee$. 
     We also use the result (see in \cite[Theorem 2.7]{simon2005trace} ): If $T_{1},T_{2}$ are two bounded linear operators on Hilbert space $\cH$ and \(T_{3} \in \Sp^{\alpha} (\cH)\), then 
		\begin{equation}\label{composition of operators}
		    \|T_{1}T_{3}T_{2}  \|_{\Sp^{\alpha}(\cH)} \leq \| T_{1} \|_{\cH \to \cH} \| T_{3} \|_{\Sp^{\alpha}(\cH)} \|T_{2}\|_{\cH \to \cH}.
		\end{equation}
        And H\"older's inequality for the Schatten space (see \cite[Theorem 2.8]{simon2005trace}):
        \begin{equation}\label{Holder for Schatten}
            \|ST  \|_{\Sp^{\alpha}(\cH)} \leq  \| S \|_{\Sp^{\alpha_{1}}(\cH)} \|T\|_{\Sp^{\alpha_{2}}(\cH)},
        \end{equation}
        for all $S \in \Sp^{\alpha_{1}}(\cH)$ and $T \in \Sp^{\alpha_{2}}(\cH)$ and $\frac{1}{\alpha}=\frac{1}{\alpha_{1}}+\frac{1}{\alpha_{2}}.$
\subsubsection{\textbf{Duality principle in $\Sp^{\alpha}$}}
\begin{lemma}[Lemma 3 in \cite{frank2017restriction}]\label{PL1} Let $p, q \geq 1,$ and $\alpha \geq 1.$ Let $T$ be a bounded operator from a separable Hilbert space $\cH$ to $L_{t}^{2p}L_{z}^{2q}(I \times \M)$. Then the following are equivalent.
		\begin{enumerate}
			\item[(i)] There is a constant $C > 0$ such that
			\begin{equation*}
				\left\|WTT^*W\right\|_{\Sp^\alpha (\cH)} \leq C\left\|W\right\|^{2}_{L_t^{2p'}L_x^{2q'}(I \times \M)},
			\end{equation*} for all $ W \in {L_t^{2p'}L_z^{2q'}(I \times \M)}.$ Where the function $W$ act as multiplication operator. 
			\item[(ii)] There is a constant $C' > 0$ such that for any ONS $(f_j)_{j \in \mathbb{Z}} \subset L_{z}^{2}(\mathcal{M})$ and any sequence $(\lambda_j)_{j \in \mathbb{Z}} \subset \mathbb{C}$,
			\begin{equation*}
				\left\| \sum_{j \in \mathbb{Z}} \lambda_j |Tf_j|^2 \right\|_{L_t^{p}L_z^{q}(I \times \M)} \leq C' \left( \sum_{j \in \mathbb{Z}} |\lambda_j|^{\alpha'} \right)^{1/\alpha'}.
			\end{equation*}
	\end{enumerate} \end{lemma}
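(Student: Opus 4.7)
The plan is to establish the equivalence via the standard duality argument pairing mixed Lebesgue norms with Schatten norms, as developed by Frank and Sabin \cite{frank2017restriction}. Both directions rest on a common identity: for an orthonormal system $(f_j) \subset L^2_x(\M)$ and a sequence $\lambda = (\lambda_j)$, the finite-rank operator $\gamma := \sum_j \lambda_j \lvert f_j \rangle \langle f_j \rvert$ on $L^2_x(\M)$ has Schatten norm $\|\gamma\|_{\Sp^{\alpha'}(L^2_x(\M))} = \|\lambda\|_{\ell^{\alpha'}}$, and unraveling the trace gives
\[
\iint_{I \times \M} V(t,x) \sum_j \lambda_j \lvert Tf_j(t,x)\rvert^2 \, dx\, dt = \operatorname{Tr}\!\bigl( T^* V T \, \gamma \bigr),
\]
which links the density $\sum_j \lambda_j \lvert Tf_j \rvert^2$ in (ii) to the operator $T^*VT$ on $L^2_x(\M)$.

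For the direction (i) $\Rightarrow$ (ii), I would first use the duality $L^p_t L^q_x = (L^{p'}_t L^{q'}_x)^*$ to reduce (ii) to bounding $\lvert \operatorname{Tr}(T^*VT \, \gamma) \rvert$ linearly in $\|V\|_{L^{p'}_t L^{q'}_x}$ and $\|\gamma\|_{\Sp^{\alpha'}}$. The Schatten--H\"older inequality $\lvert \operatorname{Tr}(AB) \rvert \leq \|A\|_{\Sp^\alpha} \|B\|_{\Sp^{\alpha'}}$ reduces the task to showing $\|T^*VT\|_{\Sp^\alpha} \lesssim \|V\|_{L^{p'}_t L^{q'}_x}$. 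Next I would factor the multiplier as $V = \overline{W_1} W_2$, where $W_1 = |V|^{1/2}$ and $W_2 = \operatorname{sgn}(V)\,|V|^{1/2}$ both belong to $L^{2p'}_t L^{2q'}_x$ with norm $\|V\|_{L^{p'}_t L^{q'}_x}^{1/2}$, so that $T^*VT = (W_1 T)^* (W_2 T)$. The Schatten--H\"older inequality with $\tfrac{1}{\alpha} = \tfrac{1}{2\alpha} + \tfrac{1}{2\alpha}$ then gives
\[
\|T^*VT\|_{\Sp^\alpha} \leq \|W_1 T\|_{\Sp^{2\alpha}} \, \|W_2 T\|_{\Sp^{2\alpha}},
\]
and each factor satisfies $\|W_i T\|_{\Sp^{2\alpha}}^2 = \|W_i T T^* W_i\|_{\Sp^\alpha}$ since $AA^*$ and $A^*A$ share their singular values; these are exactly the quantities controlled by (i).

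For the converse (ii) $\Rightarrow$ (i), I would run the same chain in reverse. Observing that $\|WTT^*W\|_{\Sp^\alpha(\cH)} = \|(WT)(WT)^*\|_{\Sp^\alpha} = \|T^* V T\|_{\Sp^\alpha(L^2_x(\M))}$ with $V = \lvert W \rvert^2$, Schatten duality identifies this quantity with the supremum of $\lvert \operatorname{Tr}(T^*VT\,\gamma) \rvert$ over self-adjoint $\gamma$ of unit $\Sp^{\alpha'}$-norm. Using the spectral decomposition $\gamma = \sum_j \lambda_j \lvert f_j \rangle \langle f_j \rvert$ with $(f_j)$ orthonormal and $\|\lambda\|_{\ell^{\alpha'}} = 1$, the trace reduces to $\iint V \sum_j \lambda_j \lvert Tf_j \rvert^2$; applying H\"older in $(t,x)$ followed by the hypothesis (ii) yields the desired bound.

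The argument is essentially standard, so I do not foresee a genuine obstacle. The only mild technicality is the polar-decomposition bookkeeping used to split a complex-valued multiplier $V$ into two factors $W_1, W_2 \in L^{2p'}_t L^{2q'}_x$ of equal norm; this is where I would be careful, but it does not require any new idea.
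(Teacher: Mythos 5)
The paper does not actually supply a proof of Lemma~\ref{PL1}: it is quoted verbatim as Lemma~3 of Frank--Sabin \cite{frank2017restriction} and used as a black box, so there is no in-text argument to compare against. Your write-up is a correct and faithful reconstruction of the standard duality proof from that reference --- the trace identity $\iint V\sum_j\lambda_j|Tf_j|^2 = \operatorname{Tr}(T^*VT\gamma)$, the factorisation $V=\overline{W_1}W_2$ with $W_1=|V|^{1/2}$, Schatten--H\"older, and the identity $\|A\|_{\Sp^{2\alpha}}^2=\|AA^*\|_{\Sp^{\alpha}}$ --- and you correctly flag the one genuine bookkeeping point (for complex $V$ one gets $W_2TT^*\overline{W_2}$ rather than $W_2TT^*W_2$, which is harmless since conjugation by the phase of $W_2$ is unitary and leaves the Schatten norm unchanged).
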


\subsubsection{\textbf{Complex interpolation in $\Sp^{\alpha}$}}
Let $a_0, a_1 \in \mathbb R$ such that $a_0<a_1$ and strip $S= \{ z' \in \mathbb C: a_0 \leq \operatorname{Re}(z') \leq a_1\}.$  Denote Hilbert space $\mathcal{H}= L^2_{t} L_{z}^2(I \times \mathcal{M})$ where $I \subset \mathbb R$  is  a time  interval.
We say a  family of operators  $\{T_{z'}\} \subset \mathcal{B}(\mathcal{H})$  defined in a strip  $S$ is analytic in the sense of Stein if it has the following properties:
\begin{itemize}
    \item[-] for each $z' \in S,$ $T_{z'}$ is a linear transformation  of simple functions on $I \times \mathcal{M}$
    \item[-] for all simple functions $F, G$ on $I \times \mathcal{M},$ the map $z' \mapsto \langle G, T_{z'} F \rangle$ is analytic in the interior of the strip  $\stackrel{\circ}{S}$ and continuous in $S$
\item[-] $\sup_{a_0 \leq \lambda \leq a_1} |\langle G, T_{\lambda +is} F \rangle| \leq C(s)$ for some $C(s)$ with at most a (double) exponential growth  in $s$.
\end{itemize}
\begin{lemma} \cite[Theorem 2.9]{simon2005trace}\label{s-interpolation} Let $\{T_z\} \subset \mathcal{B}(\mathcal{H})$ be an  analytic family of operators on $I \times  \mathcal {M}$ in the sense of Stein defined in the strip  $S$. If there exist $M_0, M_1, b_0, b_1>0,$ $1 \leq p_0, q_0, p_1, q_1$ and $1\leq r_0, r_1  \leq  \infty$ such that for all simple functions $W_1, W_2$ on $I \times \mathcal{M}$ and $s \in \mathbb R,$ we have 
\[ \|W_1 T_{a_0 +is} W_2 \|_{\Sp^{r_0}(\mathcal{H})} \leq M_0 e^{b_0 |s|}\|W_1\|_{L^{q_0} (I, L^{p_0} (\mathcal{M}))}\|W_2\|_{L^{q_0} (I, L^{p_0} (\mathcal{M}))}\]
and 
\[ \|W_1 T_{a_1 +is} W_2 \|_{\Sp^{r_1}(\mathcal{H})} \leq M_1 e^{b_1|s|} \|W_1\|_{L^{q_0} (I, L^{p_0} (\mathcal{M}))}\|W_2\|_{L^{q_0} (I, L^{p_0} (\mathcal{M}))}\]
Then for all $\tau \in [0,1]$ we have 
 \[ \|W_1 T_{a_1 +is} W_2 \|_{\Sp^{r_\tau}(\mathcal{H})} \leq M_0^{1-\tau}M_1^{\tau}\|W_1\|_{L^{q_0} (I, L^{p_0} (\mathcal{M}))}\|W_2\|_{L^{q_0} (I, L^{p_0} (\mathcal{M}))}\]
 where $a_\tau, r_\tau, p_\tau$ and $q_\tau$ are defined by 
 \[a_\tau = (1- \tau)a_0 + \tau a_1, \quad \frac{1}{r_\tau}= \frac{1-\tau}{r_0}+ \frac{\tau}{r_1}, \quad  \frac{1}{p_\tau} = \frac{1-\tau}{p_0}+ \frac{\tau}{p_1}, \quad \frac{1}{q_\tau} = \frac{1-\tau}{q_0}+ \frac{\tau}{q_1}.\]
\end{lemma}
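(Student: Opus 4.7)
The proposed proof is the standard lift of Stein's complex interpolation theorem to the Schatten-class setting, combined with trace duality. The plan is to apply Hadamard's three-lines principle to a scalar-valued analytic function built from the Schatten pairing, after inserting a Gaussian regularizer that tames the double-exponential growth allowed by the Stein hypothesis.

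First I would use duality: by the trace pairing one has
\[
\|W_1 T_{a_\tau + is_0} W_2\|_{\Sp^{r_\tau}(\cH)}
= \sup_{B} \bigl|\operatorname{Tr}\bigl(B W_1 T_{a_\tau + is_0} W_2\bigr)\bigr|,
\]
where the supremum is over finite-rank operators with $\|B\|_{\Sp^{r_\tau'}} \leq 1$, while the right-hand side norms $\|W_j\|_{L^{q_\tau}(L^{p_\tau})}$ can be tested against simple functions. Fix such $B$, $s_0 \in \mathbb{R}$, and simple $W_1, W_2$. Decompose $W_j = \sum_k c_{j,k} e^{i\theta_{j,k}} \mathbf{1}_{E_{j,k}}$ and $B = \sum_\ell \mu_\ell |u_\ell\rangle\langle v_\ell|$ via singular value decomposition, and pick linear exponents $\alpha_j(z), \beta(z)$ with $\alpha_j(a_\tau) = \beta(a_\tau) = 1$ that rescale $|c_{j,k}|$ and $\mu_\ell$ so that, at $\mathrm{Re}(z) = a_j$, the analytic families
\[
W_j^{(z)} = \sum_k |c_{j,k}|^{\alpha_j(z)} e^{i\theta_{j,k}} \mathbf{1}_{E_{j,k}}, \qquad B^{(z)} = \sum_\ell \mu_\ell^{\beta(z)} |u_\ell\rangle\langle v_\ell|
\]
carry the $L^{q_j}(L^{p_j})$ and $\Sp^{r_j'}$ norms required by the endpoint hypotheses.

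Next, form the auxiliary function
\[
F(z) = e^{\delta (z - a_\tau - is_0)^2}\,\operatorname{Tr}\bigl(B^{(z)} W_1^{(z)} T_z W_2^{(z)}\bigr)
\]
for a small $\delta > 0$. The Stein-analyticity of $\{T_z\}$, together with the analytic dependence of $B^{(z)}$ and $W_j^{(z)}$ on $z$ (these are finite sums with holomorphic coefficients), makes $F$ holomorphic on the interior of $S$ and continuous up to the boundary. On the boundary line $\mathrm{Re}(z) = a_j$, Schatten--H\"older ($\|WT_zW\|_{\Sp^{r_j}}$ pairing with $\|B^{(z)}\|_{\Sp^{r_j'}}$) and the endpoint hypothesis yield
\[
|F(a_j + is)| \leq M_j\,e^{b_j|s| + \mathrm{Re}(\delta(a_j - a_\tau + i(s-s_0))^2)} \|W_1\|_{L^{q_\tau}(L^{p_\tau})} \|W_2\|_{L^{q_\tau}(L^{p_\tau})},
\]
and since $\mathrm{Re}((a_j - a_\tau + i(s-s_0))^2) = (a_j - a_\tau)^2 - (s-s_0)^2$, the Gaussian dominates the $e^{b_j|s|}$ factor and $F$ is uniformly bounded on each boundary line.

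Hadamard's three-lines theorem then yields
\[
|F(a_\tau + is_0)| \leq M_0^{1-\tau} M_1^\tau\, \|W_1\|_{L^{q_\tau}(L^{p_\tau})} \|W_2\|_{L^{q_\tau}(L^{p_\tau})}.
\]
At $z = a_\tau + is_0$ the Gaussian factor equals $1$ and $B^{(z)}, W_j^{(z)}$ reduce to $B, W_1, W_2$, so the left side is $|\operatorname{Tr}(B W_1 T_{a_\tau + is_0} W_2)|$; taking the supremum over $B$ closes the argument. The main technical obstacle I anticipate is verifying the Stein-analyticity of the composite operator-valued map $z \mapsto B^{(z)} W_1^{(z)} T_z W_2^{(z)}$ and the associated continuity up to the boundary of the strip, particularly ensuring that the at-most-double-exponential growth hypothesis on $\langle G, T_z F\rangle$ persists after multiplication by the $z$-dependent simple functions and the trace pairing. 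Once this is secured, the rest is bookkeeping on the exponents $\alpha_j, \beta$ and the parameter $\delta$.
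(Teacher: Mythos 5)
The paper does not prove this lemma — it cites it verbatim from \cite[Theorem~2.9]{simon2005trace} (as stated, with a few typos: the second endpoint hypothesis and the conclusion should carry $L^{q_1}(L^{p_1})$, $L^{q_\tau}(L^{p_\tau})$ and $T_{a_\tau+is}$ respectively). Your reconstruction --- trace duality to reduce to a scalar holomorphic function, analytic rescaling of the simple functions and the dual finite-rank operator, and a three-lines argument --- is precisely the standard Stein/Simon proof, so your overall approach is the right one.

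Two details in the write-up would not survive a careful reading, though. First, the rescaling $W_j^{(z)} = \sum_k |c_{j,k}|^{\alpha_j(z)} e^{i\theta_{j,k}}\mathbf 1_{E_{j,k}}$, with a single affine exponent $\alpha_j(z)$, only converts the $L^{q_\tau}(L^{p_\tau})$ norm of $W_j$ into the $L^{q_k}(L^{p_k})$ norm of $W_j^{(a_k+is)}$ when $p_\tau/p_k = q_\tau/q_k$, i.e.\ when the ratios $p_k/q_k$ are the same at both endpoints. For a genuinely mixed norm with $p_0/q_0 \neq p_1/q_1$ this fails, and one needs the Benedek--Panzone two-level normalization: write $W_j(t,x) = \phi_j(t,x)\,\psi_j(t)$ with $\psi_j(t)=\|W_j(t,\cdot)\|_{L^{p_\tau}}$ and $\|\phi_j(t,\cdot)\|_{L^{p_\tau}}=1$, then raise $\phi_j$ and $\psi_j$ to \emph{separate} affine exponents matching the inner $p$'s and the outer $q$'s. (It happens that in this paper's applications of the lemma one endpoint is always $(p_1,q_1)=(\infty,\infty)$, in which case both compatibility constraints give $\alpha_j(a_1)=0$ and the single-power scaling works --- but not for the lemma as stated.) Second, the Gaussian factor $e^{\delta(z-a_\tau-is_0)^2}$ decays like $e^{-\delta s^2}$ on the vertical lines and so dominates the \emph{single}-exponential boundary factors $e^{b_j|s|}$, but Stein analyticity allows \emph{double}-exponential growth of $\langle G, T_z F\rangle$ in the interior of the strip, which no fixed Gaussian can absorb. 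Hadamard's three-lines lemma (which requires boundedness on the whole strip) is therefore not directly applicable; one should invoke the Hirschman/Phragm\'en--Lindel\"of version, which permits interior growth of order $\exp(e^{c|s|})$ with $c<\pi/(a_1-a_0)$ and only uses the boundary bounds. With those two repairs the argument closes.
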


\subsection{Fourier extension and restriction operators}
In order to restrict  the  frequency variable on $d-$dimensional cube, we introduce
\begin{eqnarray}\label{rdp}
  S_{d,N}  = \hM \cap [-N, N]^d,  \quad d= \dim (\M).  
\end{eqnarray}
Define
\begin{eqnarray*}
  \varphi (\xi)= \begin{cases}
      |\xi|^{\theta} & if \quad \xi \in \mathbb Z^m\\
      |\xi_1|^\theta + |\xi_2|^\theta & if \quad \xi=(\xi_1, \xi_2)\in \mathbb R^n \times \mathbb Z^m
  \end{cases}. 
\end{eqnarray*}
Let $\eta$ be the function mentioned earlier in \ref{notation}, and let $\varphi$ be defined as above. For $a  \in L_{\xi}^2(\hM),$ the  \textbf{Fourier extension operator} $\mathcal{E}_N$  is given by 
	\begin{equation}\label{FEP}
	   \mathcal{E}_N a (t,z) = \int_{\hM} a(\xi)  e^{2\pi i (z \cdot \xi + t\varphi (\xi))} \eta(\frac{\xi}{N}) d\xi, \quad(t, z) \in I \times \M. 
	\end{equation}
By Plancherel theorem,  it follows that   $\mathcal{E}_{N}:L_{\xi}^2(\hM) \to \cH $ is a bounded operator. In fact, we have 
	\begin{align} \label{d1}
		\|\mathcal{E}_{N}a\|_{\cH}^{2}&= \int_{I} \|\mathcal{E}_{N}a(t, \cdot)\|^2_{L_{z}^{2}(\M)} \, dt =\int_{I} \|\widehat{\mathcal{E}_{N}a(t, \cdot)}(\xi)\|^2_{L_{\xi}^{2}(\hM)} \, dt \lesssim \|a\|_{L_{\xi}^{2}(\hM)}^{2}.
	\end{align}    
The dual\footnote{the dual operator of $\mathcal{E}_N$ means that
	$\langle \mathcal{E}_N a, F \rangle_{\cH} = \langle a, \mathcal{E}_N^* F \rangle_{L^2(\hM)}
	$
	holds for all $a$ and $F$} of  the operator $\mathcal{E}_N$ is denoted by $\mathcal{E}^*_N-$so called the \textbf{Fourier restriction operator}. Specifically, $\mathcal{E}_N^*: \cH \to L_{\xi}^2(\hM)$ is
	given by
 \begin{equation}\label{FRO}
   F \mapsto \mathcal{E}_N^* F (\xi) =
	      \int_{I \times \M} F(t, z) e^{-2\pi i (z \cdot \xi + t\varphi (\xi))} \eta(\frac{\xi}{N}) \, dz dt 
.  
 \end{equation} 
We note that a composition of $\mathcal{E}_N$ and $\mathcal{E}^*_N$  gives us a convolution operator $\cH$. In fact, we have 
  \begin{flalign}\label{P1}
		\mathcal{E}_N \circ \mathcal{E}_N^* F(t,z) &= \int_{\hM } \mathcal{E}_N^*F(\xi) e^{2\pi i (z \cdot \xi + t\varphi (\xi))} \eta(\frac{\xi}{N}) \, d\xi  \nonumber \\
		&=\int_{ \hM }\int_{I \times \M} F(t',z') e^{-2\pi i (z' \cdot \xi + t'\varphi (\xi))}e^{2\pi i (z \cdot \xi + t\varphi (\xi))} \left(\eta(\frac{\xi}{N})\right)^{2} \, d\xi \, dz' \, dt' \nonumber \\
		&=\int_{I \times \M} F(z',t')\int_{ \hM }e^{2\pi i [(z-z') \cdot \xi + (t-t')\varphi (\xi)]} \left(\eta(\frac{\xi}{N})\right)^{2} \, d\xi \, dz' \, dt' \nonumber \\
        & =  \int_{I \times \M} K_N (z - z', t - t') F(z', t') \, dz' dt'.
	\end{flalign}  
   where 
	\begin{equation}\label{kernel of extension}
	    K_N (t,z) = \int_{\hM} e^{2\pi i (z \cdot \xi + t \varphi (\xi))} \left(\eta(\frac{\xi}{N})\right)^{2} \, d\xi.
	\end{equation}
	\begin{remark}\label{usrev} We can restate the inequality \eqref{onstr} as follows: it holds for every $N > 1$, any $\lambda \in \ell^{\alpha'}$, and any orthonormal system $(f_j)_j \subset L_{z}^2(\M)$ if and only if
	\begin{equation}\label{P2}
		\left\| \sum_j \lambda_j |\mathcal{E}_N a_j|^2 \right\|_{L_t^p L_z^q} \leq C_\sigma N^{\sigma} \| \lambda \|_{\ell^{\alpha'}}
	\end{equation}
	holds for any $N > 1$, $\lambda \in \ell^{\alpha'}$, and any ONS $(a_j)_j \subset L_{\xi}^{2}(\hM)$. This is
	because if we let $a_j = \widehat{f_j}$, then the orthonormality of $(f_j)_j$ in $L_{z}^2(\M)$ is equivalent
	to the one of $(a_j)_j$ in $L_{\xi}^{2}(\hM)$ and $$e^{it (-\Delta)^{\frac{\theta}{2}}} P_{\le N}f_j = \mathcal{E}_N a_j.$$ 
    \end{remark}
	\begin{lemma}[Hardy-Littlewood-Sobolev inequality \cite{frank2014strichartz}]\label{PL3}
		Let $p,q > 1$ \text{and} $\lambda \in [0,1).$ Let $\frac{1}{p}+\frac{1}{q}+\lambda=2 \text{ and } \frac{1}{p}+\frac{1}{q} \geq 1.$ Let $f \in L^{p}(\mathbb{R})$ and $ g \in L^{q}(\mathbb{R}).$ Then we have 
		$$\left|\int_{\mathbb{R}}\int_{\mathbb{R}} \frac{f(x) g(y)}{|x-y|^{\lambda}} \,dx \, dy\right| \leq C \|f\|_{L^{p}(\mathbb{R})}\|g\|_{L^{q}(\mathbb{R})}.$$
	\end{lemma}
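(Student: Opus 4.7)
The plan is to reduce the bilinear inequality to a linear convolution bound via duality and then invoke the O'Neil refinement of Young's inequality for convolution with a weak Lebesgue function. First I would rewrite the left-hand side as
\[
\left| \int_{\mathbb{R}}\int_{\mathbb{R}} \frac{f(x) g(y)}{|x-y|^{\lambda}}\,dx\,dy \right| = \left| \int_{\mathbb{R}} g(y) \, (f \ast K)(y) \, dy \right|,
\]
where $K(x) = |x|^{-\lambda}$. Defining $\tfrac{1}{r} := \tfrac{1}{p} + \lambda - 1$, the scaling constraint $\tfrac{1}{p} + \tfrac{1}{q} + \lambda = 2$ rearranges to $\tfrac{1}{q} = 1 - \tfrac{1}{r}$, i.e.\ $q = r'$. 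The remaining hypotheses $p,q > 1$, $\lambda \in (0,1)$, and $\tfrac{1}{p} + \tfrac{1}{q} \geq 1$ are exactly what is needed to place $p$ strictly between $1$ and $1/\lambda$ and $r$ strictly between $1$ and $\infty$.

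The core step is then the convolution estimate $\|f \ast K\|_{L^r(\mathbb{R})} \lesssim \|f\|_{L^p(\mathbb{R})}$. The kernel $K$ lies in the weak Lebesgue space $L^{1/\lambda,\infty}(\mathbb{R})$ because $|\{x : K(x) > t\}| = 2 t^{-1/\lambda}$, but it narrowly fails to be in $L^{1/\lambda}(\mathbb{R})$, so ordinary Young's inequality is unavailable at the endpoint. Instead I would apply the O'Neil generalized Young inequality,
\[
\|f \ast K\|_{L^r(\mathbb{R})} \lesssim \|f\|_{L^p(\mathbb{R})} \, \|K\|_{L^{1/\lambda,\infty}(\mathbb{R})},
\]
valid for $1 < p, r < \infty$ with $\tfrac{1}{r} = \tfrac{1}{p} + \lambda - 1$. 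Finishing with the Hölder pairing $\bigl| \langle g, f \ast K \rangle \bigr| \leq \|g\|_{L^{r'}} \|f \ast K\|_{L^r}$ and using $r' = q$ yields the claimed bound with a constant depending only on $p$, $q$, and $\lambda$.

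The main obstacle in a fully self-contained presentation is the O'Neil inequality itself, which sits one level deeper than elementary Young. One standard route is to derive it from Marcinkiewicz interpolation after verifying two weak-type endpoint estimates for $T_{\lambda} f := f \ast K$. A more hands-on alternative is to use the Riesz rearrangement inequality to reduce to symmetric decreasing $f$ and $g$, split $K = K\,\mathbf{1}_{|x|\leq R} + K\,\mathbf{1}_{|x|>R}$, control the two convolutions by ordinary Young's inequality using the $L^{1}$ and $L^{q'}$ norms of the pieces respectively, and optimize in $R>0$; the scaling-invariant choice of $R$ matches the required exponents. Since the statement is quoted from \cite{frank2014strichartz} and used as a black box in subsequent arguments, I would not unpack the interpolation here.
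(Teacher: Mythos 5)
The paper itself does not prove Lemma~\ref{PL3}; it is stated as a black box, cited from \cite{frank2014strichartz} (the one-dimensional Hardy--Littlewood--Sobolev inequality, here with a redundant extra hypothesis $\frac{1}{p}+\frac{1}{q}\geq 1$, automatic since $\lambda<1$), so there is no in-paper proof to compare against. Your sketch is a correct and entirely standard route: duality reduces to the convolution bound $\|f\ast|\cdot|^{-\lambda}\|_{L^r}\lesssim\|f\|_{L^p}$, which follows from the O'Neil/weak Young inequality with $K\in L^{1/\lambda,\infty}(\mathbb{R})$, and the Riesz-rearrangement alternative you mention is also a legitimate self-contained proof. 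One small misstatement worth fixing: the constraints $q>1$ together with $\frac{1}{p}+\frac{1}{q}+\lambda=2$ force $\frac{1}{p}>1-\lambda$, i.e.\ $1<p<\frac{1}{1-\lambda}$, not $1<p<\frac{1}{\lambda}$ as you wrote; the derived exponent $r$ with $\frac{1}{r}=\frac{1}{p}+\lambda-1$ then lies in $(\frac{1}{\lambda},\infty)$, which is the range needed for the weak Young inequality with kernel in $L^{1/\lambda,\infty}$. This slip does not affect the soundness of the argument, but the exponent range should be stated correctly before the endpoint Young inequality is invoked.
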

     \begin{lemma}[see p. 11 \cite{bahouri2011basic}] \label{PL5}
     Let $1 \leq p_{0},p_{1},q_{0},q_{1},\alpha_{0}, \alpha_{1} \leq \infty.$ And let $T: L_{t}^{p_{0}}L_{z}^{q_{0}} \to \ell^{\alpha_{0}}$ and $T: L_{t}^{p_{1}}L_{z}^{q_{1}} \to \ell^{\alpha_{1}}$ be bounded operator with norm $A_{0}$ and $A_{1}$ respectively. Then $T: L_{t}^{p_{\tau}}L_{z}^{q_{\tau}} \to \ell^{\alpha_{\tau}}$ is bounded operator with norm $A_{\tau}$ for $\tau \in [0,1],$ where $$\left(\frac{1}{p_{\tau}},\frac{1}{q_{\tau}},\frac{1}{\alpha_{\tau}}\right)=(1-\tau)\left(\frac{1}{p_{0}},\frac{1}{q_{0}},\frac{1}{\alpha_{0}}\right)+\tau \left(\frac{1}{p_{1}},\frac{1}{q_{1}},\frac{1}{\alpha_{1}}\right) \text{ and } A_{\tau}=A_{0}^{1-\tau}A_{1}^{\tau}.$$
 \end{lemma}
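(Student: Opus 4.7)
The plan is to deduce this from the mixed-norm version of the Riesz--Thorin theorem (Stein interpolation). By the standard duality between $\ell^{\alpha_\tau}$ and $\ell^{\alpha_\tau'}$, it is enough to establish the bilinear bound
$$|\langle Tf, g \rangle| \leq A_0^{1-\tau} A_1^\tau \|f\|_{L_t^{p_\tau} L_x^{q_\tau}} \|g\|_{\ell^{\alpha_\tau'}}$$
for every simple function $f$ on $I \times \M$ and every finitely supported sequence $g = (g_j)_j$. By homogeneity, normalize so that both norms on the right equal $1$; the goal is then to show $|\langle Tf, g \rangle| \leq A_0^{1-\tau} A_1^\tau$.

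The key step is the construction of an analytic family that interpolates $f$ and $g$. Write $f(t,x) = u(t,x) |f(t,x)|$ with $|u| = 1$, and factor $|f(t,x)| = h(t) k(t,x)$ where $h(t) := \|f(t,\cdot)\|_{L_x^{q_\tau}}$, so that $\|k(t,\cdot)\|_{L_x^{q_\tau}} = 1$ for a.e.\ $t$ in the support of $h$. Similarly write $g_j = v_j |g_j|$ with $|v_j| = 1$. For $z$ in the strip $S = \{0 \leq \operatorname{Re} z \leq 1\}$, define
$$f_z(t,x) := u(t,x)\, h(t)^{\,p_\tau (\frac{1-z}{p_0} + \frac{z}{p_1})}\, k(t,x)^{\,q_\tau (\frac{1-z}{q_0} + \frac{z}{q_1})},$$
$$g_z := \Bigl( v_j\, |g_j|^{\,\alpha_\tau' (\frac{1-z}{\alpha_0'} + \frac{z}{\alpha_1'})} \Bigr)_j,$$
with the natural conventions when an exponent is $\infty$. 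Then $f_\tau = f$ and $g_\tau = g$, while a direct computation shows $\|f_{iy}\|_{L_t^{p_0} L_x^{q_0}} = \|g_{iy}\|_{\ell^{\alpha_0'}} = 1$ and $\|f_{1+iy}\|_{L_t^{p_1} L_x^{q_1}} = \|g_{1+iy}\|_{\ell^{\alpha_1'}} = 1$ for every $y \in \mathbb{R}$.

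Set $F(z) := \langle T f_z, g_z \rangle$. Since $f$ is simple and $g$ is finitely supported, $F$ is holomorphic in the interior of $S$, continuous and of at most (double) exponential growth on $S$. The two hypotheses give $|F(iy)| \leq A_0$ and $|F(1+iy)| \leq A_1$ for every $y \in \mathbb{R}$, so Hadamard's three-lines lemma yields $|F(\tau)| \leq A_0^{1-\tau} A_1^\tau$, which is exactly the desired inequality.

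The main technical subtlety is the factorization $|f(t,x)| = h(t) k(t,x)$: unlike the case of a pure $L^p$ norm, the outer and inner exponents of a mixed norm $L_t^{p_\tau} L_x^{q_\tau}$ must be analytically continued separately so that $f_z$ has the correct normalization on both boundary lines of $S$. Once this decomposition is in place, the remainder of the argument follows the classical Riesz--Thorin/Stein template verbatim, and analogous care for the summation/integration order in $g_z$ handles the $\ell^{\alpha_\tau'}$ side.
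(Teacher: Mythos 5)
The paper does not actually prove Lemma \ref{PL5}; it simply cites it to Bahouri--Chemin--Danchin, so there is no in-paper argument to compare against. Your proposal is a correct, self-contained proof via the standard Stein/Riesz--Thorin template: the factorization $\lvert f(t,x)\rvert = h(t)\,k(t,x)$ with $h(t)=\lVert f(t,\cdot)\rVert_{L^{q_\tau}_x}$ and $\lVert k(t,\cdot)\rVert_{L^{q_\tau}_x}=1$ is precisely the device needed to continue the outer and inner exponents separately, and your verification that $\lVert f_{iy}\rVert_{L^{p_0}_tL^{q_0}_x}=\lVert h\rVert_{L^{p_\tau}_t}^{p_\tau/p_0}=1$ (and likewise on the line $\operatorname{Re} z=1$ and for $g_z$ in $\ell^{\alpha'_\bullet}$) is the key identity; after that, Hölder at the two boundary lines plus Hadamard's three-lines lemma closes the argument. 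The only points one would want to spell out in a fully rigorous write-up are the conventions when one of $p_\bullet,q_\bullet,\alpha'_\bullet$ is infinite (which you flag) and the implicit assumption that $T$ is defined consistently on simple functions so that $z\mapsto (Tf_z)_j$ is genuinely holomorphic; both are the usual Riesz--Thorin bookkeeping and do not affect the substance. In short, your proof is correct and is essentially the argument the cited reference supplies.
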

 \begin{lemma}[Hilbert-Schmidt operators \cite{simon2005trace}] \label{PL6}
      Let $H = L^2(M, d\mu)$ for some separable measure space (i.e., one with $H$ separable). If $A \in \Sp^2(H)$, then there exists a unique function $K \in L^2(M \times M, d\mu \otimes d\mu)$ with
\begin{equation}\label{P4}
    (A\varphi)(x) = \int_{M} K(x,y) \varphi(y) d\mu(y)
\end{equation}
Conversely, any $K \in L^2(M \times M)$ defines an operator $A$ by \eqref{P4} which is in $\Sp^2(H)$ and
\[
\|A\|_{\Sp^{2}(H)} = \|K\|_{L^{2}(M \times M, d\mu \otimes d\mu)}.
\]
 \end{lemma}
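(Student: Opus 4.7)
The plan is to exploit an orthonormal basis of $H$ to produce a canonical tensor-product basis of $L^2(M\times M,d\mu\otimes d\mu)$, and then identify $\Sp^2(H)$ isometrically with $L^2(M\times M)$ via Parseval. Fix an orthonormal basis $\{e_n\}_{n\in\mathbb{N}}$ of the separable Hilbert space $H=L^2(M,d\mu)$. A standard fact (provable by Fubini plus Parseval) is that the family
\[
\Phi_{m,n}(x,y) := e_m(x)\,\overline{e_n(y)}, \qquad m,n\in\mathbb{N},
\]
is an orthonormal basis of $L^2(M\times M,d\mu\otimes d\mu)$. This will be the bridge between the two spaces.

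For the forward direction, suppose $A\in\Sp^2(H)$ and set $c_{mn}:=\langle Ae_n,e_m\rangle_H$. By the definition of the Hilbert--Schmidt norm in terms of any orthonormal basis,
\[
\sum_{m,n}|c_{mn}|^2 \;=\; \sum_n \|Ae_n\|_H^2 \;=\; \|A\|_{\Sp^2(H)}^2 \;<\; \infty.
\]
Hence the series $K(x,y):=\sum_{m,n} c_{mn}\,\Phi_{m,n}(x,y)$ converges in $L^2(M\times M)$ to a function with $\|K\|_{L^2(M\times M)}=\|A\|_{\Sp^2(H)}$. It remains to verify that this $K$ is genuinely the integral kernel of $A$. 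To this end, I would define the integral operator $A_K$ by \eqref{P4} (which is well defined on $L^2$ inputs by Cauchy--Schwarz in the $y$-variable, together with the $L^2$-integrability of $K(x,\cdot)$ for a.e.~$x$ via Fubini) and compute $\langle A_K e_n,e_m\rangle = c_{mn}$ directly by expanding $K$ in the $\Phi_{m,n}$ basis and using Parseval. Since $A$ and $A_K$ agree on the basis $\{e_n\}$, they agree on all of $H$. Uniqueness of $K$ follows at once: any two kernels representing $A$ differ by an $L^2(M\times M)$ function whose Fourier coefficients against $\Phi_{m,n}$ all vanish, hence vanishes a.e.

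For the converse, start with $K\in L^2(M\times M)$ and expand $K=\sum_{m,n} c_{mn}\Phi_{m,n}$ with $\sum|c_{mn}|^2=\|K\|_{L^2}^2<\infty$. Define $A_K$ by \eqref{P4}; by Fubini and Cauchy--Schwarz in $y$, $A_K\varphi\in L^2(M)$ for every $\varphi\in H$, and the same Parseval computation as above gives $\langle A_K e_n,e_m\rangle = c_{mn}$. Therefore
\[
\sum_n \|A_K e_n\|_H^2 \;=\; \sum_{m,n} |c_{mn}|^2 \;=\; \|K\|_{L^2(M\times M)}^2,
\]
which shows $A_K\in\Sp^2(H)$ with the claimed norm identity.

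The main subtlety I would be careful about is the convergence and measurability issues: the partial sums $K_N=\sum_{m,n\le N} c_{mn}\Phi_{m,n}$ converge only in $L^2(M\times M)$ (not pointwise), so the identification of $K$ as the kernel of $A$ has to be carried out weakly against pairs $(e_m,e_n)$ rather than pointwise. Passing from ``$A$ and $A_K$ have the same matrix entries in a basis'' to ``$A=A_K$'' relies on the density of finite linear combinations of the $e_n$'s in $H$, which is where separability of $H$ enters essentially. Everything else is bookkeeping with Parseval and Fubini.
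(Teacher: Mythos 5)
Your argument is correct: the paper does not prove this lemma at all but cites it from Simon's \emph{Trace Ideals} \cite{simon2005trace}, and the proof there is essentially your orthonormal tensor-basis/Parseval identification of $\Sp^{2}(H)$ with $L^{2}(M\times M,d\mu\otimes d\mu)$ via the matrix entries $c_{mn}=\langle Ae_n,e_m\rangle$. The only detail worth adding is that in the converse direction the finiteness of $\sum_n\|A_K e_n\|_H^2$ should also be used to note compactness of $A_K$ (it is the $\Sp^{2}$-limit of the finite-rank truncations coming from the partial sums $K_N$), since the paper defines $\Sp^{\alpha}$ as a class of compact operators.
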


\begin{lemma}{\cite[Propositions 2.5 and 3.5]{Dinh}}\label{dispersive on R}
        Let $\theta \in (0,\infty) \setminus \{1\}$. Let $\varphi \in C_0^\infty(\mathbb{R} \setminus \{0\})$ and $\mathcal{M}= \mathbb R^d $ or $\mathbb T^d.$ There exists $t_0 > 0$ and $C > 0$ such that for any $h \in (0,1],$
\begin{equation*}
\left\| e^{it{(-\Delta)}^{\frac{\theta}{2}}} \varphi(h\sqrt{\Delta}) f \right\|_{L_{z}^\infty(\mathcal{M})} \leq C h^{-d} (1 + |t| h^{-\theta})^{-d/2} \|f\|_{L_{z}^1(\mathcal{M})}
\end{equation*}
for each $t \in [-t_0 h^{\theta -1}, t_0 h^{\theta -1}].$
 \end{lemma}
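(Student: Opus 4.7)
\textbf{Proof plan for Lemma \ref{dispersive on R}.} The strategy is to reduce the $L^1 \to L^\infty$ dispersive bound to a pointwise estimate on the convolution kernel
\[
K_{t,h}(x) = \int_{\widehat{\mathcal{M}}} e^{2\pi i\, x\cdot\xi}\, e^{-it|\xi|^\theta}\, \varphi(h|\xi|)\, d\xi,
\]
since, by Young's inequality, it suffices to show $\|K_{t,h}\|_{L^\infty(\mathcal{M})} \lesssim h^{-d}(1+|t|h^{-\theta})^{-d/2}$. On $\mathbb{R}^d$ I would first rescale $\xi \mapsto \xi/h$ to obtain $K_{t,h}(x) = h^{-d}\, I(x/h,\, t/h^\theta)$, where
\[
I(y, s) = \int_{\mathbb{R}^d} e^{2\pi i (y\cdot\eta + s|\eta|^\theta)}\, \varphi(|\eta|)\, d\eta,
\]
so that everything reduces to proving $|I(y,s)| \lesssim (1+|s|)^{-d/2}$ uniformly in $y$.

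The estimate on $I(y,s)$ is a standard oscillatory-integral computation for the phase $\Phi(\eta) = y\cdot\eta + s|\eta|^\theta$. When $|s| \leq 1$ the trivial bound $|I| \lesssim 1$ suffices. For $|s| \geq 1$, the critical-point equation $y = -s\theta|\eta_c|^{\theta-2}\eta_c$ admits a solution in $\operatorname{supp}\varphi$ only when $|y| \sim |s|$. Outside this resonant range one has $|\nabla \Phi| \gtrsim \max(|y|, |s|)$ on $\operatorname{supp}\varphi$, so repeated integration by parts yields the rapid decay $|I(y,s)| \lesssim_N (1+|y|+|s|)^{-N}$ for every $N$. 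In the resonant range, the Hessian
\[
\operatorname{Hess}\Phi(\eta_c) = s\,\theta\,|\eta_c|^{\theta-2}\bigl[(\theta-2)\,\widehat\eta_c \otimes \widehat\eta_c + I\bigr]
\]
has one radial eigenvalue of size $(\theta-1)|s|$ and $d-1$ tangential eigenvalues of size $|s|$; non-degeneracy here requires precisely the hypothesis $\theta \neq 1$, and yields $|\det \operatorname{Hess}\Phi(\eta_c)|^{1/2} \sim |s|^{d/2}$. The stationary-phase lemma then gives $|I(y,s)| \lesssim |s|^{-d/2}$, which combined with the non-resonant estimate proves the claim on $\mathbb{R}^d$.

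For $\mathcal{M}=\mathbb{T}^d$ I would use Poisson summation to write $K_{t,h}^{\mathbb{T}^d}(x) = \sum_{k \in \mathbb{Z}^d} K_{t,h}^{\mathbb{R}^d}(x+k)$, representing $x \in [-1/2, 1/2]^d$. The $k=0$ term gives the desired bound $h^{-d}(1+|t|h^{-\theta})^{-d/2}$ by the Euclidean case. For $k \neq 0$, in the rescaled variables $(y,s) = ((x+k)/h, t/h^\theta)$ we have $|y| \gtrsim |k|/h$, and the time restriction $|t| \leq t_0 h^{\theta-1}$ translates precisely into $|s| \leq t_0/h$, so $|y| \gg |s|$ as soon as $|k| \gg t_0$. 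Thus all but $O(1)$ terms fall into the non-resonant regime and contribute at most $h^N |k|^{-N}$; summing yields an error of $O(h^N)$ for any $N$, dwarfed by the main term. The finitely many remaining translates with $|k| \lesssim t_0$ are absorbed into the stationary-phase estimate of the $k=0$ term.

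The main obstacle will be the uniformity of both the non-resonant integration-by-parts and the stationary-phase estimate jointly in $y$ and $s$; this is exactly where the cutoff $\varphi$ plays its role, confining $|\eta| \sim 1$ so that $|\eta|^\theta$ behaves as a smooth, non-degenerate function with derivatives uniformly controlled. In particular, the fact that the allowed time window shrinks as $h \to 0$ for $\theta > 1$, and grows for $\theta < 1$, is forced by the competition in $|\nabla \Phi|$ between the linear term $y$ and the term $s\theta|\eta|^{\theta-2}\eta$, and must be tracked carefully when assembling the Poisson series on the torus.
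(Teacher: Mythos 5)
This lemma is stated in the paper as a cited result from \cite{Dinh} (Propositions 2.5 and 3.5) and is \emph{not} proved there, so there is no in-paper proof to compare against; what follows is an assessment of your sketch on its own merits and against the probable route in the cited reference.

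Your Euclidean argument is correct and standard. Reducing to the kernel by Young's inequality, rescaling $\xi\mapsto\eta/h$ to peel off the prefactor $h^{-d}$, and then running a critical-point analysis for $\Phi(\eta)=y\cdot\eta+s|\eta|^\theta$ on $\operatorname{supp}\varphi$ is exactly the right computation. Your Hessian $s\theta|\eta|^{\theta-2}\bigl[(\theta-2)\widehat\eta\otimes\widehat\eta+I\bigr]$ and the resulting radial eigenvalue $\sim(\theta-1)|s|$ and $(d-1)$ tangential eigenvalues $\sim|s|$ are correct; the non-degeneracy for $\theta\neq 1$ (with the Hessian indefinite when $\theta<1$, but still non-degenerate) is precisely what the lemma's hypothesis buys, and the compact support of $\varphi$ away from $0$ guarantees the uniformity you flagged. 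The dichotomy $|\nabla\Phi|\gtrsim\max(|y|,|s|)$ off the resonant range $|y|\sim|s|$ and integration by parts gives the $O_N((1+|y|+|s|)^{-N})$ tail bound, and stationary phase gives $|s|^{-d/2}$ on the resonant range. All of this works.

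Your torus argument via Poisson summation is also valid, and it cleanly exposes where the time restriction $|t|\le t_0 h^{\theta-1}$ enters: in rescaled variables $|y|\gtrsim|k|/h\ge (2h)^{-1}$ for $k\ne 0$ while $|s|\le t_0/h$, so for $t_0$ small (say $t_0<c_\theta$ with $c_\theta$ from the size of $\operatorname{supp}\varphi$) every $k\ne 0$ translate is non-resonant and contributes $O_N(h^{N-d}|k|^{-N})$, summing to an $O(h^{N-d})$ error which is negligible compared to the main term $h^{-d}(1+|s|)^{-d/2}\gtrsim h^{-d/2}$ for $N>d/2$. One small inaccuracy: your final sentence about ``finitely many remaining translates with $|k|\lesssim t_0$'' is vacuous, since $t_0<1$ and $|k|\ge 1$ means this set is empty; there is nothing to absorb, which only makes the argument cleaner.

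Where you genuinely diverge from the cited reference: Dinh's Proposition 3.5 is a dispersive estimate on general compact manifolds without boundary, obtained via a semiclassical WKB/parametrix construction (the frequency-localized propagator is approximated by an oscillatory integral with an asymptotic amplitude expansion, and the dispersive bound follows from stationary phase applied to that parametrix). Your Poisson-summation route is a torus-specific shortcut that avoids the parametrix entirely by comparing the periodized kernel directly to the Euclidean one. It is less general but more elementary, and on $\mathbb{T}^d$ it gives the same conclusion with less machinery — a reasonable trade.
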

\begin{lemma}\cite{cho2011}\label{fractional stri on Rd}
    If $\theta \in \mathbb{R} \setminus \{0,1\}$, for $(p, q) \in [2,\infty)^{2}$ such that $\frac{1}{p} \leq \frac{d}{2}(\frac{1}{2}-\frac{1}{q}),$ the classical Strichartz estimate
$$
\left\|e^{i t(-\Delta)^{\theta / 2}} f\right\|_{L_t^p L_z^q(\mathbb{R} \times \mathbb{R}^d)} \lesssim\|f\|_{\dot{H}^s(\mathbb{R}^d)}, \quad s=\frac{d}{2}-\frac{d}{q}-\frac{\theta}{p}
$$
holds.
\end{lemma}

\begin{lemma}\cite[Proposition 2.1]{chen2024}\label{Lp multiplier}
Let $1 \leq p \leq \infty$. A function $m(\xi_{1},\xi_{2})$ on 
$ \mathbb{R}^n \times \mathbb{Z}^m$ is a multiplier on 
$L^p( \mathbb{R}^n \times \mathbb{T}^m)$ if 
\[
T f(x,y) =  \int_{\mathbb{R}^n} \sum_{\xi_{2} \in \mathbb{Z}^m}
m(\xi_{1},\xi_{2})\,\widehat{f}(\xi_{1},\xi_{2})\,e^{2 \pi i(\xi_{1}\cdot x+\xi_{2}\cdot y)} \, d\xi_{1}
\]
defines a bounded operator (the operator norm of $T$ will be denoted by 
$\|m\|_{{M}^p(\mathbb{R}^n \times \mathbb{T}^m)}$). If $m(\eta,\xi)$ is a continuous multiplier on 
$L^p(\mathbb{R}^{n+m}),$ then its restriction 
$m|_{\mathbb{R}^n \times \mathbb{Z}^m}$ is a multiplier on 
$L^p(\mathbb{R}^n \times \mathbb{T}^m )$ with
\[
\|m\|_{{M}^p(\mathbb{R}^n \times \mathbb{T}^m )} 
\leq \|m\|_{{M}^p(\mathbb{R}^{n+m})}.
\]
\end{lemma}

\begin{proposition}\cite[Theorem 5.5.1]{grafakos2008classical}\label{l2 valued extension}
Let $0 < p,q < \infty$ and let $(X,\mu)$ and $(Y,\nu)$ be two $\sigma$-finite measure
spaces. Suppose that $T$ is a bounded linear operator from $L^p(X)$ to $L^q(Y)$ with norm $\|T \|.$
Then $T$ has an $\ell^2$-valued extension, that is, for all complex-valued functions $f_j$ in
$L^p(X)$ we have
\[
\Big\| \Big( \sum_j |T(f_j)|^2 \Big)^{\tfrac{1}{2}} \Big\|_{L^q(Y)} 
\leq C_{p,q} \|T \| \, \Big\| \Big( \sum_j |f_j|^2 \Big)^{\tfrac{1}{2}} \Big\|_{L^p(X)}
\]
for some constant $C_{p,q}$ that satisfies $C_{p,q}=1$ if $p \leq q$.
\end{proposition}
   \section{The principal lemmas}
\subsection{Kernel Estimates}   
In this section we shall prove the following kernel estimates, which will play a crucial role in our analysis.  
\begin{proposition}\label{l8}
		For $\theta \geq 2.$ Then 
		\begin{equation*}
			\left| \sum_{\xi=-N}^{N} e^{2 \pi i (x \cdot \xi +t |\xi|^{\theta})} \right| \leq C_{\theta} |t|^{-\frac{1}{\theta}},
		\end{equation*}
		for any $(t,x) \in  [-N^{-(\theta-1)},N^{-(\theta-1)}] \times \mathbb{T},$ where $N>1,$ and $N \in \mathbb{Z}.$ 
        \end{proposition}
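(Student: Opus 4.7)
The plan is to prove the kernel bound by combining a dyadic decomposition of the exponential sum with the classical second-derivative van der Corput test for exponential sums. First, by isolating the $n=0$ term and splitting at the sign of $n$, it suffices to control the one-sided sums $\sum_{n=1}^{N} e^{2\pi i(\pm xn + tn^\theta)}$. Second, in the ``trivial regime'' $|t|\leq N^{-\theta}$ we have $|t|^{-1/\theta}\geq N$, so bounding the sum by its number of terms already yields $|\sum|\leq 2N+1\lesssim |t|^{-1/\theta}$. Hence only the range $N^{-\theta}\leq |t|\leq N^{-(\theta-1)}$ requires genuine oscillatory analysis.

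For this main range I would dyadically decompose $\{1,2,\dots,N\}=\bigcup_{j=0}^{J}[2^j,2^{j+1})$ with $2^J\sim N$. Setting $\phi(n)=\pm xn+tn^{\theta}$, the second derivative $\phi''(n)=t\theta(\theta-1)n^{\theta-2}$ is monotone in $n>0$ for $\theta\geq 2$, and on each dyadic block satisfies $|\phi''(n)|\sim |t|\,2^{j(\theta-2)}$ up to a $\theta$-dependent constant. Van der Corput's second-derivative test applied block by block produces
\[
\left|\sum_{n\in[2^j,2^{j+1})} e^{2\pi i\phi(n)}\right| \;\lesssim_\theta\; |t|^{1/2}\,2^{j\theta/2} \;+\; |t|^{-1/2}\,2^{-j(\theta-2)/2}.
\]
Summing in $j$, the first series is a geometric progression dominated by its largest term $|t|^{1/2}N^{\theta/2}$; for $\theta>2$ the second series is geometric with ratio $2^{-(\theta-2)/2}<1$ and sums to $O_\theta(|t|^{-1/2})$. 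The endpoint case $\theta=2$ reduces to a single application of van der Corput on $[1,N]$ (since $\phi''$ is constant) and gives the same two terms directly.

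It then remains to check that both contributions are $\lesssim |t|^{-1/\theta}$ on the range $|t|\leq N^{-(\theta-1)}$. The bound $|t|^{-1/2}\lesssim |t|^{-1/\theta}$ is automatic because $|t|\leq 1$ and $\tfrac{1}{\theta}\leq \tfrac{1}{2}$, while $|t|^{1/2}N^{\theta/2}\lesssim |t|^{-1/\theta}$ rearranges to $|t|^{(\theta+2)/(2\theta)}\lesssim N^{-\theta/2}$, which under the hypothesis $|t|\leq N^{-(\theta-1)}$ reduces to the algebraic inequality $(\theta-1)(\theta+2)\geq \theta^2$, i.e.\ precisely to $\theta\geq 2$. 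It is reassuring (and reveals the sharpness of the proposition) that this inequality is an equality at $\theta=2$, recovering the Kenig--Ponce--Vega kernel bound on $\mathbb{T}$.

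The step I expect to be the main source of technical care is the hypothesis-checking for van der Corput's second-derivative test on each dyadic block: one needs both monotonicity of $\phi''$ and the bounded-oscillation condition $\sup|\phi''|/\inf|\phi''|\lesssim_\theta 1$, both of which are arranged exactly by the dyadic partition. A second minor source of care is the borderline case $\theta=2$, where the key algebraic inequality becomes tight and no slack is available. Once these two points are dispatched, the rest of the argument consists of routine geometric summation and an elementary regime comparison.
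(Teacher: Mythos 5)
Your argument breaks down at the step where you claim the contribution $|t|^{-1/2}$ from the second van der Corput term is $\lesssim |t|^{-1/\theta}$. For $0<|t|\leq 1$ the map $a\mapsto |t|^{-a}$ is \emph{increasing}, so $\tfrac1\theta\leq\tfrac12$ gives $|t|^{-1/\theta}\leq |t|^{-1/2}$, i.e.\ the inequality goes the wrong way. Concretely, at the boundary $|t|\sim N^{-\theta}$ of your ``main regime'' and with $\theta>2$, your estimate produces $|t|^{-1/2}=N^{\theta/2}$, while the target is $|t|^{-1/\theta}=N$, and $N^{\theta/2}\gg N$. So second-derivative van der Corput together with your summation yields a strictly weaker bound than the proposition asserts; it does not prove it for any $\theta>2$.

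The dyadic framework is salvageable, but the fix is essential and you did not state it: on each block you must also keep the trivial estimate $2^j$ and use
\[
\Bigl|\sum_{n\in[2^j,2^{j+1})}e^{2\pi i\phi(n)}\Bigr|\ \lesssim_\theta\ |t|^{1/2}2^{j\theta/2}\ +\ \min\!\bigl(2^j,\ |t|^{-1/2}2^{-j(\theta-2)/2}\bigr).
\]
The two quantities in the minimum balance at $2^j\sim |t|^{-1/\theta}$; summing the trivial bound for small $j$ gives $\lesssim |t|^{-1/\theta}$, and summing the van der Corput tail for large $j$ gives $|t|^{-1/2}\cdot(|t|^{-1/\theta})^{-(\theta-2)/2}=|t|^{-1/\theta}$. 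With this refinement, together with the bound $|t|^{1/2}N^{\theta/2}\lesssim|t|^{-1/\theta}$ that you correctly reduced to $\theta\geq2$, the proof closes. Note that the paper takes an entirely different route: it never dyadically decomposes, and it does not rely on the second-derivative test. Instead it compares the sum to the integral $\int_0^N e^{2\pi i(f(x)-px)}\,dx$ via the Zygmund--van der Corput lemma (the number of relevant subintervals is controlled by $|f'(N)-f'(0)|=\theta|t|N^{\theta-1}\lesssim1$ under the hypothesis $|t|\leq N^{-(\theta-1)}$), and then estimates that integral directly using the $k$-th derivative van der Corput oscillatory-integral bound with $k=[\theta]$ or $[\theta]+1$, which produces the exact exponent $|t|^{-1/\theta}$ in one step with no balancing. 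In short: the paper gets $-1/\theta$ from a high-order derivative, you tried to get it from the second derivative and lost a power, and the repair requires the minimum-with-trivial-bound trick you omitted.
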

In order to  prove this proposition, we first need following lemmas.    
\begin{lemma}\label{gle} \
\begin{enumerate}
    \item \label{l1}  \cite[Proposition 2.6.7, p. 153]{grafakos2008classical} Let $f:(a,b) \to \mathbb{R}$ be a map, $k \in \mathbb{Z}^{+}$ and $|f^{(k)}(x)| \geq \rho >0$ for any $x \in (a,b)$ with $f'$ monotonic on $(a,b).$ Then 
		\begin{equation*}
			\left|\int_{a}^{b} e^{2 \pi i f(x)}dx \right| \leq c_{k} \rho^{-\frac{1}{k}},
		\end{equation*}
		where $c_{k}$ is independent of $a,b.$
        \item \label{l3} \cite[p. 198, Lemma 4.4]{zygmund2002trigonometric}  If $f:[a,b] \to \mathbb{R}$ is a map such that $f'$ (derivative of f) is monotone and $|f'| \leq \frac{1}{2}$ in $(a,b),$ then 
		\begin{equation*}
			\left|\int_{a}^{b} e^{2 \pi i f(x)}dx - \sum_{a \leq \xi \leq b} e^{2 \pi i f(\xi)} \right| \leq A,
		\end{equation*}
		where $A$ is absolute constant and $\xi \in \mathbb{Z}$.
\end{enumerate}
\end{lemma}

	\begin{lemma}\label{l4}
		Let $f:[a,b] \to \mathbb{R}$ be a map where $b-a >1$. Let $f'$ be strictly monotonic function on $(a,b)$ such that 
		\begin{equation*}
			\left|\int_{a}^{b} e^{2 \pi i (f(x)-px)}dx \right| \leq C_{\theta} \rho^{-\frac{1}{\theta}} \text{ for all } p \in \mathbb{Z},
		\end{equation*}
		for some $\rho >0 ,\theta \geq 2$ and $C_{\theta}$ is independent of $a,b,p.$ Then 
		\begin{equation*}
			\left| \sum_{a \leq \xi \leq b} e^{2 \pi i f(\xi)} \right| \leq [|f'(b)-f'(a)|+2][C_{\theta}\rho ^{-\frac{1}{\theta}}+A],
		\end{equation*} where $\xi \in \mathbb{Z}.$
		
	\end{lemma}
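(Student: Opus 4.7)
\textbf{Proof proposal for Lemma \ref{l4}.} The plan is to partition $[a,b]$ into sub-intervals on which $f'(x)-p$ stays in $[-1/2,1/2]$ for an integer $p$, apply Lemma \ref{l3} on each piece, and combine the resulting estimates. I will describe the increasing case for $f'$, the decreasing case being analogous by symmetry.

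First, since $f'$ is strictly monotonic and continuous on $(a,b)$, for every integer $p$ the set
$$J_p := \{x \in [a,b] : |f'(x)-p| \leq 1/2\}$$
is a (possibly empty or degenerate) closed sub-interval of $[a,b]$. Two consecutive $J_p$'s meet only at a single point where $f'(x) = p+1/2$, so the $J_p$ have pairwise disjoint interiors. The non-empty $J_p$ are exactly those with $[p-1/2,p+1/2]\cap[f'(a),f'(b)]\neq\emptyset$, so their number $M$ satisfies $M \leq |f'(b)-f'(a)|+2$. Every integer in $[a,b]$ lies in at least one $J_p$; if it lies in two, I assign it to just one, which only decreases the subsequent sum.

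Next, for each non-empty $J_p$ consider $g_p(x) := f(x)-px$. Its derivative $g_p'(x) = f'(x)-p$ is strictly monotonic and satisfies $|g_p'(x)| \leq 1/2$ throughout $J_p$, so Lemma \ref{l3} applies and gives
$$\left|\int_{J_p} e^{2\pi i(f(x)-px)}\,dx \;-\; \sum_{n\in J_p\cap\mathbb{Z}} e^{2\pi i(f(n)-pn)}\right| \leq A.$$
Since $p,n\in\mathbb{Z}$ implies $e^{-2\pi i pn}=1$, the discrete sum equals $\sum_{n\in J_p\cap\mathbb{Z}} e^{2\pi i f(n)}$. Invoking the hypothesis on $J_p$ yields $\bigl|\sum_{n\in J_p\cap\mathbb{Z}} e^{2\pi i f(n)}\bigr| \leq C_\theta\rho^{-1/\theta} + A$.

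Finally, summing over the at most $|f'(b)-f'(a)|+2$ values of $p$ and using the triangle inequality together with the disjoint-interior partition of the integers $[a,b]\cap\mathbb Z$ among the $J_p$, I obtain
$$\left|\sum_{a \leq n \leq b} e^{2\pi i f(n)}\right| \leq \bigl[|f'(b)-f'(a)|+2\bigr]\bigl[C_\theta\rho^{-1/\theta}+A\bigr],$$
which is the stated bound. The only subtle point is applying the hypothesis on the sub-intervals $J_p$ rather than on all of $[a,b]$: in the intended application this is harmless, because the bound $C_\theta\rho^{-1/\theta}$ ultimately comes from the van der Corput estimate of Lemma \ref{l1} (with a lower bound on $|f^{(\theta)}|$ valid on $[a,b]$ and hence on every sub-interval), so the same constant $C_\theta$ works uniformly for each $J_p$. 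This is the main technical point I expect to verify when writing out the argument in detail.
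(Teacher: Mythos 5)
Your proof is correct and takes essentially the same route as the paper's: both partition $[a,b]$ at the points where $f'$ crosses a half-integer, apply Lemma~\ref{l3} to $g_p(x)=f(x)-px$ on each piece, and count the pieces via $|f'(b)-f'(a)|+2$. The subtlety you flagged---invoking the integral hypothesis on the sub-intervals $J_p$ rather than on $[a,b]$ itself---is present in the paper's proof as well and is resolved exactly as you say: the constant $C_\theta$ supplied by Lemma~\ref{l6} (ultimately van~der~Corput, Lemma~\ref{l1}) is independent of the endpoints, so the same bound holds on every sub-interval.
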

	\begin{proof} Without loss of generality we assume that $f'$ is strictly increasing function. Let $\alpha_{p} \in (a,b)$ be a point (if any) such that $$f'(\alpha_{p})=p-\frac{1}{2}$$
		and  $$F_{p}(x)=e^{2 \pi i(f(x)-px)}\  \text{ for all } \ p \in \mathbb{Z}. $$
Since $f'$ is strictly monotonic function, we have  
\[ |f'(x)-p| \leq \frac{1}{2} \quad \text{for all} \  x\in (\alpha_{p},\alpha_{p+1}). \ \]
Let $\alpha_{r},\alpha_{r+1},\alpha_{r+2},...,\alpha_{r+m}$ be the points, if any such exists, belonging to the interval $a \leq x \leq b.$ Using hypothesis and Lemma \ref{gle} \eqref{l3}, we have \\

\begin{align*}
    \left| \sum_{\alpha_{p} \leq \xi \leq \alpha_{p+1}} e^{2 \pi i f(\xi)} \right| &=\left| \sum_{\alpha_{p} \leq \xi \leq \alpha_{p+1}} e^{2 \pi i (f(\xi)-p \xi)}\right|  \\
    &\leq \left| \int_{\alpha_{p}}^{\alpha_{p+1}}  e^{2 \pi i (f(x)-px)} dx  \right|+ \left|\sum_{\alpha_{p} \leq \xi \leq \alpha_{p+1}} e^{2 \pi i (f(\xi)-p \xi)}-\int_{\alpha_{p}}^{\alpha_{p+1}}  e^{2 \pi i (f(x)-px)} dx \right|\\
    & \leq C_{\theta} \rho ^{-\frac{1}{\theta}}+A.
\end{align*} 
Similarly,
		$$\left| \sum_{a \leq \xi \leq \alpha_{r}} e^{2 \pi i f(\xi)} \right| \leq C_{\theta} \rho ^{-\frac{1}{\theta}}+A, $$
		and 
		$$\left| \sum_{\alpha_{r+m} \leq \xi \leq b} e^{2 \pi i f(\xi)} \right| \leq C_{\theta} \rho ^{-\frac{1}{\theta}}+A .$$
		Thus 
		$$\left| \sum_{a \leq \xi \leq b} e^{2 \pi i f(\xi)} \right| \leq (m+2) (C_{\theta} \rho ^{-\frac{1}{\theta}}+A) .$$
		Note that  $m+2=f'(\alpha_{r+m})-f'(\alpha_{r})+2.$
		So,
		$$\left| \sum_{a \leq \xi \leq b} e^{2 \pi i f(\xi)} \right| \leq (f'(\alpha_{r+m})-f'(\alpha_{r})+2) (C_{\theta} \rho ^{-\frac{1}{\theta}}+A) .$$
		Since $f'$ is strictly increasing, we get
		$$\left| \sum_{a \leq \xi \leq b} e^{2 \pi i f(\xi)} \right| \leq (f'(b)-f'(a)+2) (C_{\theta} \rho ^{-\frac{1}{\theta}}+A) .$$ 
	\end{proof}

	\begin{lemma}\label{l6}
		Let $f:(0,b) \to \mathbb{R}$ such that $f(x)=x \cdot \xi +t x^{\theta}$ be a function for any nonintegers $\theta \geq 2, \xi \in \mathbb{R}.$ If $b>1,$ then 
		\begin{equation*}
			\left|\int_{0}^{b} e^{2 \pi i (f(x)-px)}dx \right| \leq C_{\theta} |t|^{-\frac{1}{\theta}}, \text{ for all } t \in \mathbb{R} \setminus \{0\}, p \in \mathbb{Z}.
		\end{equation*}
	\end{lemma}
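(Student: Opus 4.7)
The plan is to reduce the integral to a normalized oscillatory integral by rescaling and then apply van der Corput's lemma (Lemma \ref{l1}) after splitting off a unit-length piece near the origin. The phase $f(x) - px = (\xi - p)x + t x^{\theta}$ has nonlinear scale $|t|^{1/\theta}$, so the natural substitution is $y = |t|^{1/\theta} x$. Under this change of variable, the integral becomes
\[
\int_0^b e^{2\pi i (f(x)-px)}\, dx = |t|^{-1/\theta}\int_0^B e^{2\pi i \phi(y)}\, dy,
\]
where $B = |t|^{1/\theta} b$, $\phi(y) = \mu y + \epsilon y^{\theta}$, $\mu = (\xi - p)|t|^{-1/\theta}$, and $\epsilon = \pm 1$ is the sign of $t$. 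It therefore suffices to establish the uniform bound $\bigl|\int_0^B e^{2\pi i \phi(y)}\, dy\bigr| \leq C_{\theta}$ for all $\mu \in \mathbb{R}$ and $B > 0$.

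For this uniform bound, I would split the domain at $y = 1$. If $B \leq 1$, the trivial length bound gives at most $B \leq 1$. If $B > 1$, take the trivial bound $\bigl|\int_0^1 e^{2\pi i \phi(y)}\, dy\bigr| \leq 1$ on the initial unit interval, and apply Lemma \ref{l1} with $k = 2$ on $[1, B]$. Computing $\phi''(y) = \epsilon\, \theta(\theta-1) y^{\theta-2}$, one sees that $\phi''$ has constant sign on $(0,\infty)$, so $\phi'$ is strictly monotonic there, and $|\phi''(y)| \geq \theta(\theta-1)$ on $[1,\infty)$ because $\theta \geq 2$ forces $y^{\theta-2} \geq 1$. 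Hence $\bigl|\int_1^B e^{2\pi i \phi(y)}\, dy\bigr| \leq c_2\, (\theta(\theta-1))^{-1/2}$, and adding the contributions yields a constant depending only on $\theta$.

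The main subtlety is that for $\theta > 2$ one cannot apply van der Corput directly on all of $[0, B]$, since $\phi''$ degenerates at the origin; splitting off $[0, 1]$ sidesteps this at no cost, as the trivial bound already suffices there. Crucially, the uniformity in $p \in \mathbb{Z}$ is automatic because the linear coefficient $\mu$ does not enter the second-derivative estimate at all, which is precisely what will allow the summation over integer frequencies when this lemma feeds into the kernel bound of Proposition \ref{l8}.
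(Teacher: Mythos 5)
Your proof is correct, and it takes a genuinely different route from the paper's. The paper works directly with $F(x) = (\xi-p)x + tx^{\theta}$ without rescaling, splits the interval at $x = |t|^{-1/\theta}$, and applies Lemma \ref{l1} on \emph{both} pieces, but with two different orders of derivative: $k = [\theta]+1$ on $(0, |t|^{-1/\theta})$, where $x^{\theta - k}$ is decreasing and bounded below on that range, and $k = [\theta]$ on $(|t|^{-1/\theta}, b)$, where $x^{\theta - k}$ is increasing. In both cases the authors verify $|F^{(k)}| \gtrsim |t|^{k/\theta}$ so that van der Corput gives the $|t|^{-1/\theta}$ decay directly. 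Your argument first normalizes out the $|t|^{1/\theta}$ scale, which shifts the degeneracy to the fixed point $y=1$; you then dispense with the near-origin piece by the trivial length bound rather than a higher-order van der Corput, and on $[1,B]$ you only ever need the second derivative, $|\phi''| \geq \theta(\theta-1)$. What this buys is economy: only $k=2$ is used, and there is no need to track the sign of $\theta - [\theta] - k$ or compute the $[\theta]$-th and $([\theta]+1)$-th derivatives of $x^{\theta}$. The paper's decomposition is morally the same (the split point is the rescaled $y=1$), but it does strictly more work on the degenerate piece. Both arguments hinge on the same observations you highlight: the linear term drops out of $\phi''$, giving uniformity in $p$, and the constant sign of $\phi''$ on $(0,\infty)$ supplies the monotonicity of $\phi'$ that Lemma \ref{l1} requires.
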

	
	\begin{proof} Let $|t|^{-\frac{1}{\theta}} >0$ be any number  and  $F(x)=f(x)-px.$  The idea is to apply Lemma \ref{gle}\eqref{l1}. To this end, we consider two cases  $x \in (0,|t|^{-\frac{1}{\theta}})$ and $x \in (|t|^{-\frac{1}{\theta}},b).$
For $x \in (0,|t|^{-\frac{1}{\theta}}),$ take  $k=[\theta]+1,$ where $[\cdot] $ is the greatest integer function. Then we have 
\begin{align*}
 |F^{(k)}(x)| &=|t||C_{1\theta}||x|^{\theta-[\theta]-1}\\
 &\geq |t||C_{1\theta}|(|t|^{-\frac{1}{\theta}})^{\theta-[\theta]-1}=|C_{1\theta}||t|^{\frac{[\theta]+1}{\theta}}=|C_{1\theta}||t|^{\frac{k}{\theta}}.   
\end{align*} 
By Lemma \ref{gle}\eqref{l1},we get 
		\begin{equation*}
			\left|\int_{0}^{|t|^{-\frac{1}{\theta}}} e^{2 \pi i F(x)}dx \right| \leq C_{\theta}^{'} (|C_{1\theta}||t|^{\frac{k}{\theta}})^{-\frac{1}{k}}=C_{1\theta}^{''} |t|^{-\frac{1}{\theta}}.
		\end{equation*}
For $x \in (|t|^{-\frac{1}{\theta}},b),$ take $k=[\theta].$ Then we have 
\begin{align*}
    |F^{(k)}(x)| & =|t||C_{1\theta}||x|^{\theta-[\theta]}\\
    & \geq |t||C_{1\theta}|(|t|^{-\frac{1}{\theta}})^{\theta-[\theta]}=|C_{1\theta}||t|^{\frac{[\theta]}{\theta}}=|C_{1\theta}||t|^{\frac{k}{\theta}}
\end{align*}
By Lemma \ref{gle} \ref{l1},we get 
		\begin{equation*}
			\left|\int_{|t|^{-\frac{1}{\theta}}}^{b} e^{2 \pi i F(x)}dx \right| \leq C_{2\theta}^{'} (|C_{2\theta}||t|^{\frac{k}{\theta}})^{-\frac{1}{k}}=C_{2\theta}^{''} |t|^{-\frac{1}{\theta}}.
		\end{equation*}
Combining the above inequalities, we get 
		\begin{equation*}
			\left|\int_{0}^{b} e^{2 \pi i F(x)}dx \right| \leq C_{1\theta}^{''} |t|^{-\frac{1}{\theta}} + C_{2\theta}^{''} |t|^{-\frac{1}{\theta}}=C_{\theta} |t|^{-\frac{1}{\theta}}.
		\end{equation*}
		where $C_{\theta}=C_{1\theta}^{''}+C_{2\theta}^{''}.$
	\end{proof}
	\begin{proof}[\textbf{Proof of Proposition \ref{l8}}]		
		Using triangle inequality, we get
		\begin{equation*}
			\left| \sum_{\xi=-N}^{N} e^{2 \pi i (x \cdot \xi +t |\xi|^{\theta})} \right| \leq 
			\left| \sum_{\xi=-N}^{0} e^{2 \pi i (x \cdot \xi +t |\xi|^{\theta})} \right|+\left|\sum_{\xi=1}^{N} e^{2 \pi i (x \cdot \xi +t |\xi|^{\theta})} \right|
		\end{equation*}
By Lemmas \ref{l6} and \ref{l4}, we get 
		\begin{equation*}
			\begin{split}
				\left| \sum_{\xi=-N}^{0} e^{2 \pi i (x \cdot \xi +t |\xi|^{\theta})} \right| &\leq [|x-x-\theta tN^{(\theta-1)}|+2][C_{\theta}^{'}|t|^{-\frac{1}{\theta}}+A] \\
				=& [\theta|t|N^{(\theta-1)}+2][C_{\theta}^{'}|t|^{-\frac{1}{\theta}}+A]
			\end{split}
		\end{equation*}
		Similarly,
		\begin{equation*}
			\begin{split}
				\left| \sum_{\xi=1}^{N} e^{2 \pi i (x \cdot \xi +t |\xi|^{\theta})} \right| &\leq [|x+\theta tN^{(\theta-1)}-x- t \theta|+2][C_{\theta}^{''}|t|^{-\frac{1}{\theta}}+A] \\
				=& [\theta|tN^{(\theta-1)}-t|+2][C_{\theta}^{''}|t|^{-\frac{1}{\theta}}+A]
			\end{split}
		\end{equation*}
		If $(t,x) \in [-N^{-(\theta-1)},N^{-(\theta-1)}] \times \mathbb{T},$ then 
		\begin{equation*}
			\left| \sum_{\xi=-N}^{N} e^{2 \pi i (x \cdot \xi +t |\xi|^{\theta})} \right| \lesssim_{\theta} |t|^{-\frac{1}{\theta}}.
		\end{equation*}		
	\end{proof}
    
\subsection{Decoupling inequality on $\mathbb R^d$}\label{decoupling sec on R} In this  subsection we shall state and prove refine decoupling inequality  $\mathbb R^d$ (see Theorem \ref{decoupling inequality for surface on waveguided} below).   We shall see that this will play a crucial role in proving  decoupling inequality on waveguide manifold (Lemma \ref{l2 decoupling wave}).

The decoupling inequality was introduced by Wolff \cite{wolff2000} and was further developed by numerous mathematicians. In the breakthrough works \cite{bourgain2015, bourgain2017}, Bourgain and Demeter proved sharp decoupling inequalities for $C^2$ surfaces in $\mathbb{R}^d$ with nonzero Gaussian curvature. For recent work on decoupling inequalities for smooth hypersurfaces with vanishing Gaussian curvature, see Biswas-Gilula-Li-Schwend-Xi \cite{biswas2020}, Demeter \cite[Section 12.6]{demeter2020}, Yang \cite{yang2021}, Li-Yang \cite{li2022,li2021}, and Guth-Maldague-Oh \cite{guth2024}.

To state our results, we  briefly set notations. For $c_1, \ldots, c_{d+1}>0$, let rectangular box $$\mathcal{R}_{C}=\left[-c_1, c_1\right] \times \ldots \times\left[-c_{d+1}, c_{d+1}\right].$$
For $(x,y,t)=(x_1,\ldots,x_n,y_{n+1},\ldots,y_{d},t) \in \mathbb{R}^{d+1},$
we will utilize two weight functions that are associated with $\mathcal{R}_{C}:$
\begin{eqnarray}\label{w1}
    \omega_{\mathcal{R}_{C}}(x,y,t)=\left(1+\sum_{j=1}^{n}\left|x_j\right| / c_j+ \sum_{j=n+1}^{d}\left|y_j\right| / c_j + {|t|}/{c_{d+1}}\right)^{-12 d}
\end{eqnarray}
and 
\begin{eqnarray}\label{w2}
    \tilde{\omega}_{\mathcal{R}_{C}}(x,y,t)=\left(1+\sum_{j=1}^{n}\left|x_j\right| / c_j+ \sum_{j=n+1}^{d}\left|y_j\right| / c_j + {|t|}/{c_{d+1}}\right)^{-10 d}.
\end{eqnarray}
Here $\omega_{\mathcal{R}_{C}}$ is adapted to the cube $\mathcal{R}_{C},$ i.e $\omega_{\mathcal{R}_{C}}(x,y,t) \sim 1$ for all $(x,y,t) \in \mathcal{R}_{C}$ and for $(x,y,t) \notin \mathcal{R}_{C},$ $\omega_{\mathcal{R}_{C}}(x,y,t)$ decays rapidly as the distance from $(x,y,t)$ to $\mathcal{R}_{C}$ increases. If $B$ is some ball of radius $R$ centered at $c,$ then we apply a suitable affine transformation (see Biswas-Gilula-Li-Schwend-Xi \cite{biswas2020}).
We use Bourgain-Demeter's decoupling \cite[Theorem 1.1]{bourgain2015} in the weighted version \cite[Proposition 9.15]{demeter2020} the paraboloid \eqref{paraboloid}

We are now ready to state our main result.

\begin{theorem}\label{decoupling inequality for surface on waveguided}
     Let $d=n+m \geqslant 1$, $Q \subset \mathbb{R}^d,$  and $\theta>1.$ We define the extension operator
\begin{equation}\label{E_Q}
    E_Q g(x,y,t)=\int_Q g(\xi_{1},\xi_{2}) e^{2 \pi i \left(x\cdot \xi_{1}+y\cdot \xi_{2}+t(|\xi_{1}|^{\theta}+|\xi_{2}|^\theta) \right)} \, d\xi_{1} \, d\xi_{2}.
\end{equation}
 Let $2 \leqslant p \leqslant \frac{2(d+2)}{d}$ and $\operatorname{Part}_{\delta^{1 / 2}}\mathcal{Q}_{d}$ denote a partition of $\mathcal{Q}_{d}:=[-1,1]^{d}$ into cubes of side length $\delta^{1 / 2}<1.$  Then for all $\varepsilon>0,$ we have
\begin{equation}\label{dc0}
    \left\|E_{\mathcal{Q}_{d}} g\right\|_{L^p\left(\omega_{R_{\delta}}\right)} \lesssim_{\varepsilon} \delta^{-\varepsilon}\left(\sum_{\Delta \in \operatorname{Part}_{\delta^{1 / 2}}\mathcal{Q}_{d}}\left\|E_{\Delta} g\right\|_{L^p\left(\tilde{\omega}_{R_{\delta}}\right)}^2\right)^{\frac{1}{2}},
\end{equation}
where $R_{\delta}\subset \mathbb R^{d+1}$ is a rectangular box of size $\delta^{-1} \times \cdots \times \delta^{-1} \times \delta^{-\max \{1, \theta / 2\}},$  \( \omega_{R_{\delta}} \) is a weight adapted to $R_\delta \subset \mathbb R^{d+1},$ and $\|f\|^p_{L_{t,x,y}^{p}(\vartheta)}=\int_{\mathbb{R}^{n+m+1}}\abs{f}^{p}\vartheta \, dx \, dy \, dt.$
\end{theorem} 

To the best of authors knowledge Theorem \ref{decoupling inequality for surface on waveguided} is new in the sense that the phase function \[\phi(\xi_{1},\xi_{2})=|\xi_{1}|^\theta+|\xi_{2}|^\theta\] in \eqref{E_Q}, which is of importance in our applications,   has not been covered in the literature so far. Our method of proof is inspired from \cite{biswas2020, wang2025strichartz}, where the authors proved versions of the decoupling theorem. However, our choice of $\phi$ raises some difficulties.  In fact, the aforementioned  $\phi$ is not a $C^2$ function on  the sets $\{0\} \times [-1,1]^m$ and $[-1,1]^n \times \{0\}$; on the other hand, phase function $\abs{\xi}^{\theta}$ considered in \cite{wang2025strichartz} is  smooth function away from the origin. We overcame this difficulty by decomposing $\mathcal{Q}_{d}$ in an appropriate manner.
\subsubsection{Comments on the phase function}\label{cpf}
 An important tool in  the decoupling arguments is the behavior of the Hessian of the phase function $\phi,$ which is defined on $\mathcal{Q}_{n+m}$ for $\theta>1.$  The  degeneracy of the phase function $\phi$ is determined by the behavior of the eigenvalues of the Hessians of $\abs{\xi_{1}}^{\theta}$ and $\abs{\xi_{2}}^{\theta}$ at a given point $(\xi_{1},\xi_{2})$. Note that the convexity of the phase changes with $\theta$, particularly relative to the value $\theta=1$.   The key idea to prove Theorem \ref{decoupling inequality for surface on waveguided} is to invoke  Bourgain-Demeter's decoupling theorem. However,  for $1< \theta <2,$ we note that   $\phi$ is not $C^2$ function on $\mathcal{Q}_{n+m},$ specifically on the sets $\{0\} \times [-1,1]^m$ and $[-1,1]^n \times \{0\}$. So, the  Hessian is not well-defined; and we cannot apply  Bourgain-Demeter's decoupling theorem directly.  \\

 If $\theta \geq 2$ then the  $\phi$ is $C^2$ function on $\mathcal{Q}_{n+m}$ and the Hessian of $\phi$ is block-diagonal matrix. The first block is an $n \times n$ matrix corresponding to the Hessian of the function $\xi_{1} \to \abs{\xi_{1}}^{\theta}$; while the second block is an $m \times m$ matrix corresponding to the Hessian of $\xi_{2} \to \abs{\xi_{2}}^{\theta}$.
Let $\phi''(\xi_{1},\xi_{2})$ denote the Hessian of the function $\phi$ at $(\xi_{1},\xi_{2})$ and  $H_{\xi_{1}}\phi(\xi)$ (resp.\ $H_{\xi_{2}}\phi(\xi)$), denote the Hessian of $\phi$ at $\xi$ in the $\xi_{1}$–variables (resp.\ $\xi_{2}$–variables). Then
       \[
  \phi''(\xi)
  = \begin{pmatrix}
      H_{\xi_{1}}\phi(\xi) & 0\\[6pt]
      0            & H_{\xi_{2}}\phi(\xi)
    \end{pmatrix},
\]
  where $\xi=(\xi_{1},\xi_{2}).$ Thus, the degeneracy of the phase function $\phi$ is determined by the behavior of the eigenvalues of the Hessians $H_{\xi_{1}}\phi(\xi)$ and  $H_{\xi_{2}}\phi(\xi)$ at a given point $(\xi_{1},\xi_{2}).$ \\
    In particular if $\theta=2,$ then $ \phi''(\xi)=2\mathcal{I}_{n+m},$ where $\mathcal{I}_{n+m}$ is identity matrix of order $n+m.$ And so it is positive definite for all $\xi.$ In this case we can directly apply Bourgain-Demeter's decoupling theorem.\\

When  $\theta>2,$ we have: 
    \begin{enumerate}
        \item If $(\xi_1,\xi_{2})=(0, 0)$, then $\phi''(0,0)=\mathcal{O}_{n+m},$ where $\mathcal{O}_{n+m}$ is zero matrix of order $(n+m).$  
        \item If $\xi_{2}=0$ and $\xi_{1} \neq 0,$ then $$\phi''(\xi_{1},0)= \begin{pmatrix}
      H_{\xi_{1}}\phi(\xi) & 0\\[6pt]
      0            &\mathcal{O}_{m}
    \end{pmatrix},$$ 
    where $H_{\xi_{1}}\phi(\xi)=\theta \abs{\xi_{1}}^{\theta-2}(\mathcal{I}_{n}+(\theta-2) \frac{\xi_{1} {\xi_{1}}^{T}}{\abs{\xi_{1}}^2})$ and so $\phi$ is degenerate. In this case we cannot apply Bourgain-Demeter's decoupling theorem.
    \item If $\xi_{1}=0$ and $\xi_{2} \neq 0,$ then $$\phi''(0,\xi_{2})= \begin{pmatrix}
      \mathcal{O}_{n} & 0\\[6pt]
      0            & H_{\xi_{2}}\phi(\xi)
    \end{pmatrix},$$ 
    where $H_{\xi_{2}}\phi(\xi)=\theta \abs{\xi_{2}}^{\theta-2}(\mathcal{I}_{m}+(\theta-2) \frac{\xi_{2} {\xi_{2}}^{T}}{\abs{\xi_{2}}^2})$ and so $\phi$ is degenerate. In this case we cannot apply Bourgain-Demeter's decoupling theorem.
    \item If $\xi_{1} \neq 0$ and $\xi_{2} \neq 0,$ then $$ \phi''(\xi)
  = \begin{pmatrix}
      H_{\xi_{1}}\phi(\xi) & 0\\[6pt]
      0            & H_{\xi_{2}}\phi(\xi)
    \end{pmatrix},$$ 
    where $H_{\xi_{1}}\phi(\xi)=\theta \abs{\xi_{1}}^{\theta-2}(\mathcal{I}_{n}+(\theta-2) \frac{\xi_{1} {\xi_{1}}^{T}}{\abs{\xi_{1}}^2})$, $H_{\xi_{2}}\phi(\xi)=\theta \abs{\xi_{2}}^{\theta-2}(\mathcal{I}_{m}+(\theta-2) \frac{\xi_{2} {\xi_{2}}^{T}}{\abs{\xi_{2}}^2})$ and so $\phi$ is non-degenerate. In this case we can apply Bourgain-Demeter's decoupling theorem.
    \end{enumerate}

\subsubsection{Proof of Theorem \ref{decoupling inequality for surface on waveguided}}

\begin{remark}[proof strategy] The following steps are in order:
\begin{itemize}
    \item We decompose  $\mathcal{Q}_d$ dyadically into four parts (see Figure \ref{fig:3}). This reduce the analysis to deal the extension operator $E_{I}$ (see \eqref{E_Q}) on each parts $I's.$ Part $\mathrm{I}$ can be easily control by Minkowski and H\"older inequalities. The proof on  Parts $\mathrm{II}$ and $\mathrm{III}$ are similar. So we shall only  prove part $\mathrm{III}.$ For part $\mathrm{III},$ it enough to prove inequality \eqref{dc1}.
    \item  To prove inequality \eqref{dc1}, by covering the box $R_{\delta}$ of size $\delta^{-1} \times \cdots \times \delta^{-1} \times \delta^{-\max \{1, \theta / 2\}}$ with box \(R_{a,\delta}\) of size \(\delta^{-1} \times \cdots \times \delta^{-1} \times a^{2-\theta}\delta^{-1}\), it is enough to prove \eqref{dc2}.  
    \item To prove \eqref{dc2}, by scaling in the time variable of the extension operator $E_{Q_{d}},$ it is enough to prove \eqref{dc4}.
Then we used a piece of paraboloid to locally approximate the surface by using Taylor's expansion and then apply Bourgain-Demeter's decoupling theorem (See \cite[Theorem 1.1]{bourgain2015}).
\end{itemize}
\end{remark}

   \begin{theorem}[Bourgain-Demeter's decoupling theorem, see Theorem 1.1 in \cite{bourgain2015} and Proposition 9.15 in \cite{demeter2020}] \label{BD theorem}
       Let $S$ be a compact $C^2$ hypersurface in $\mathbb{R}^{d+1}$ with positive definite second fundamental form. If $\operatorname{supp}(\hat{f}) \subset \mathcal{N}_\delta$, then for $2 \leq p \leq \frac{2(d+2)}{d}$ and $\varepsilon>0$,
$$
\|f\|_{L^{p}(\omega_{\mathcal{R}_{C}})} \lesssim_{\varepsilon} \delta^{-\varepsilon}\left(\sum_{\Delta \in \operatorname{Part}_{\delta^{1 / 2} }\mathcal{N}_\delta}\left\|f_\Delta\right\|_{L^{p}(\omega_{\mathcal{R}_{C}})}^2\right)^{1 / 2},
$$
where $f_{\Delta}$ denote the Fourier restriction of $f$ to $\Delta$ and $\mathcal{N}_\delta$ is $\delta$ neighbourhood of the truncated paraboloid $\{(\xi, \abs{\xi}^2) \in \mathbb{R}^{d+1}: \xi=(\xi'_{1}, \cdots, \xi'_{d}) \in \mathbb{R}^{d}, \abs{\xi'_{i}} \leq 1/2 \}.$ 
   \end{theorem}

\begin{proof}[\textbf{Proof of Theorem \ref{decoupling inequality for surface on waveguided}}] 
\begin{figure}[h]
\centering
\begin{tikzpicture}[scale=0.5]
  \draw[thick] (-4,-4) rectangle (4,4);
\draw[thick] (-1,-1) rectangle (1,1);
\draw[] (1,-1) rectangle (2,1);
\draw[] (3,-1) rectangle (4,1);
\draw[] (-2,-1) rectangle (-1,1);
\draw[] (-4,-1) rectangle (-3,1);
\draw[] (-1,1) rectangle (1,2);
\draw[] (-1,3) rectangle (1,4);
\draw[] (-1,-2) rectangle (1,-1);
\draw[] (-1,-4) rectangle (1,-3);
\draw[dashed] (2.1,0.5) -- (2.9,0.5);
\draw[dashed] (-2.1,0.5) -- (-2.9,0.5);
\draw[dashed] (0.5,2.1) -- (0.5,2.9);
\draw[dashed] (0.5,-2.1) -- (0.5,-2.9);
  \draw[->] (-5.5,0) -- (5.5,0) node[right] {$\xi_{1}$};
  \draw[->] (0,-5.5) -- (0,5.5) node[above] {$\xi_{2}$};

  \node at (0.5,0.5) {$\mathrm{I}$};
\node at (1.5,0.5) {$\mathrm{II}$};
\node at (0.5,1.5) {$\mathrm{III}$};
\node at (2.5,2.5) {$\mathrm{IV}$};
  \fill (0,0) circle (2pt) node[below left] {$O$};
  \fill (4,0) circle (2pt) node[below right] {$1$};
\fill (-4,0) circle (2pt) node[below left] {$-1$};
\fill (0,4) circle (2pt) node[above left] {$1$};
\fill (0,-4) circle (2pt) node[below left] {$-1$};
\fill[red, opacity=0.3] (-1,-1) rectangle (1,1);
\fill[green, opacity=0.3] (3,-1) rectangle (4,1);
\fill[green, opacity=0.3] (-2,-1) rectangle (-1,1);
\fill[green, opacity=0.3] (-4,-1) rectangle (-3,1);
\fill[green, opacity=0.3]  (1,-1) rectangle (2,1);
\fill[green, opacity=0.3]  (2,-1) rectangle (3,1);
\fill[green, opacity=0.3] (-3,-1) rectangle (-2,1);
\fill[yellow, opacity=0.3] (-1,3) rectangle (1,4);
\fill[yellow, opacity=0.3] (-1,-2) rectangle (1,-1);
\fill[yellow, opacity=0.3] (-1,-4) rectangle (1,-3);
\fill[yellow, opacity=0.3] (-1,1) rectangle (1,2);
\fill[yellow, opacity=0.3] (-1,2) rectangle (1,3);
\fill[yellow, opacity=0.3] (-1,-3) rectangle (1,-2);
\end{tikzpicture}
\caption{Case $n=m=1$: red = $\mathrm{I}$, green = $\mathrm{II}$, yellow = $\mathrm{III}$, white = $\mathrm{IV}$.}
\label{fig:3}
\end{figure}
  Let us denote  
\[\mathrm{I}= \left\{(\xi_{1}, \xi_{2}) \in \mathcal{Q}_{n+m}:|\xi_{1}| \leq \delta^{\frac{1}{2}-\varepsilon},|\xi_{2}| \leq \delta^{\frac{1}{2}-\varepsilon}\right\},\]
\[\mathrm{II}= \bigcup_{k=1}^{K_1}\left\{(\xi_{1}, \xi_{2}) \in \mathcal{Q}_{n+m}:|\xi_{2}| \leq \delta^{\frac{1}{2}-\varepsilon}, 2^{k-1} \delta^{\frac{1}{2}-\varepsilon} \leq|\xi_{1}| \leq 2^k \delta^{\frac{1}{2}-\varepsilon}\right\},\]
\[ \mathrm{III}= \bigcup_{k=1}^{K_2}\left\{(\xi_{1}, \xi_{2}) \in \mathcal{Q}_{n+m}:|\xi_{1}| \leq \delta^{\frac{1}{2}-\varepsilon}, 2^{k-1} \delta^{\frac{1}{2}-\varepsilon} \leq|\xi_{2}| \leq 2^k \delta^{\frac{1}{2}-\varepsilon}\right\},\]
and 
\[\mathrm{IV}=\bigcup_{(k_{1},k_{2})=(1,1)}^{(K_1,K_{2})}\left\{(\xi_{1}, \xi_{2}) \in \mathcal{Q}_{n+m}:2^{k_{1}-1} \delta^{\frac{1}{2}-\varepsilon} \leq|\xi_{1}| \leq 2^{k_{1}} \delta^{\frac{1}{2}-\varepsilon}, 2^{k_{2}-1} \delta^{\frac{1}{2}-\varepsilon} \leq|\xi_{2}| \leq 2^{k_{2}} \delta^{\frac{1}{2}-\varepsilon}\right\}.\]
Let $K_{1} \approx \log(\delta^{-1})$ and $K_{2} \approx \log(\delta^{-1}).$ Then we can 
decompose the cube \(\mathcal{Q}_{d}\) dyadically as  
 \[\mathcal{Q}_{d}= \mathrm{I} \cup \mathrm{II} \cup \mathrm{III} \cup \mathrm{IV}.\]
By Minkowski and H\"older inequalities, we get 
\begin{align*}
    \left\|E_{\mathrm{I}} g\right\|_{L^p\left(\omega_{R_{\delta}}\right)}
    &\leq \sum_{\Delta \in \operatorname{Part}_{\delta^{1 / 2}}\mathrm{I}}\left\|E_{\Delta} g\right\|_{L^p\left(\tilde{\omega}_{R_{\delta}}\right)} \\
    &\lesssim_{\varepsilon} \delta^{-\varepsilon}\left(\sum_{\Delta \in \operatorname{Part}_{\delta^{1 / 2}}\mathrm{I}}\left\|E_{\Delta} g\right\|_{L^p\left(\tilde{\omega}_{R_{\delta}}\right)}^2\right)^{\frac{1}{2}}.
\end{align*}
For any $\delta^{\frac{1}{2}-\varepsilon} \leq a \leq \frac{1}{2},$ denote the annulus
\[A_{a}=\{\xi_{2} \in \mathcal{Q}_m: a \leq |\xi_{2}| \leq 2a\}.\]
Let $x \in \mathbb{R}^n$ be fixed. To prove \eqref{dc0} for Part $\mathrm{III}$, it is enough to prove that for any $\Delta_{\xi_{1}} \in \operatorname{Part}_{\delta^{\frac{1}{2}}}(\abs{\xi_{1}}\leq \delta^{\frac{1}{2}-\varepsilon}),$  we have  
\begin{align}\label{dc1}
&\norm{E_{\Delta_{\xi_{1}} \times A_a} g(x)}_{L^p_{t,y}(\omega_{R_\delta}(x))} \lesssim_{\varepsilon} \delta^{-\varepsilon}
\left( \sum_{\Delta_{\xi_{2}} \in \operatorname{Part}_{\delta^{\frac{1}{2}}}(A_a)} \norm{E_{\Delta_{\xi_{1}} \times \Delta_{\xi_{2}}} g(x)}_{L^p_{t,y}(\tilde{\omega}_{R_\delta}(x))}^2 \right)^{\frac{1}{2}}.
\end{align}
Let \( I_{k}= \left\{ \xi_{2} \in \mathcal{Q}_{m}: 2^{k-1} \delta^{\frac{1}{2}-\varepsilon} \leq|\xi_{2}| \leq 2^k \delta^{\frac{1}{2}-\varepsilon}\right\},\) for $k=1, \cdots,K_{2}.$
In fact for Part $\mathrm{III},$ by Minkowiski inequality
\begin{align*}
  \norm{E_{\mathrm{III}} g}_{L^p(\omega_{R_\delta})} & \leq \sum_{k=1}^{K_{2}} \sum_{\Delta_{\xi_{1}}} \norm{E_{\Delta_{\xi_{1}} \times I_{k}} g}_{L^p(\omega_{R_\delta})} \\
  &\lesssim_{\varepsilon} \delta^{-\varepsilon} \sum_{k=1}^{K_{2}} \sum_{\Delta_{\xi_{1}}}  \norm{ \left( \sum_{\Delta_{\xi_{2}} \in \operatorname{Part}_{\delta^{\frac{1}{2}}}(I_{k})} \norm{E_{\Delta_{\xi_{1}} \times \Delta_{\xi_{2}}} g(x)}_{L^p_{t,y}(\tilde{\omega}_{R_\delta}(x))}^2 \right)^{\frac{1}{2}}}_{L_{x}^p(\mathbb{R}^n)} \\
  &\lesssim_{\varepsilon} \delta^{-\varepsilon} \sum_{k=1}^{K_{2}} \sum_{\Delta_{\xi_{1}}}\left( \sum_{\Delta_{\xi_{2}} \in \operatorname{Part}_{\delta^{\frac{1}{2}}}(I_k)}  \norm{  E_{\Delta_{\xi_{1}} \times \Delta_{\xi_{2}}} g }_{L^p(\tilde{\omega}_{R_\delta})} ^2 \right)^{\frac{1}{2}} \\
  &\lesssim_{\varepsilon} \delta^{-\frac{3\varepsilon}{2}} (K_{2})^{\frac{1}{2}} \left( \sum_{k=1}^{K_{2}} \sum_{\Delta_{\xi_{1}}}\sum_{\Delta_{\xi_{2}} \in \operatorname{Part}_{\delta^{\frac{1}{2}}}(I_k)}  \norm{  E_{\Delta_{\xi_{1}} \times \Delta_{\xi_{2}}} g }_{L^p(\tilde{\omega}_{R_\delta})} ^2 \right)^{\frac{1}{2}} \\
  &\lesssim_{\varepsilon} \delta^{-{2\varepsilon}} \left( \sum_{\Delta_{\xi_{1}} \times \Delta_{\xi_{2} \in \operatorname{Part}_{\delta^{\frac{1}{2}}}(\mathrm{III}})}  \norm{  E_{\Delta_{\xi_{1}} \times \Delta_{\xi_{2}}} g }_{L^p(\tilde{\omega}_{R_\delta})} ^2 \right)^{\frac{1}{2}},
\end{align*}
where second inequality follows from equation \eqref{dc1}. The third inequality follows from the Minkowski integral inequality, while the fourth inequality results from H\"older's inequality. The fifth inequality is justified by the fact that $K_{2} \approx \log(\delta^{-1}),$ i.e $K_{2} \lesssim_{\varepsilon}\delta^{-\varepsilon}.$

Specifically, first we shall prove \eqref{dc2} implies \eqref{dc1}.  Our initial claim is that, for any \(\delta^{\frac{1}{2}-\varepsilon} \leq a \leq \frac{1}{2}\), we have 
\begin{align}\label{dc2}
\norm{E_{\Delta_{\xi_{1}} \times A_a} g(x)}_{L^p_{t,y}(\omega_{R_{a,\delta}}(x))} 
&\lesssim_{\varepsilon} \delta^{-\varepsilon}
\left( \sum_{\Delta_{\xi_{2}} \in \operatorname{Part}_{\delta^{\frac{1}{2}}}(A_a)} \norm{E_{\Delta_{\xi_{1}} \times \Delta_{\xi_{2}}} g(x)}_{L^p_{t,y}(\omega_{R_{a,\delta}}(x))}^2 \right)^{\frac{1}{2}},
\end{align}
where \(R_{a,\delta}\) is a rectangular box of size \(\delta^{-1} \times \cdots \times \delta^{-1} \times a^{2-\theta}\delta^{-1}.\) Since \(\delta^{-\max\{1, \theta/2\}} \geq a^{2-\theta}\delta^{-1}\), we can split \(R_\delta\) into copies of \(R_{a,\delta}\) and by \cite[Lemma 4]{wang2025strichartz},
\begin{equation}\label{dc3}
\omega_{R_\delta}(x,y,t) \lesssim_{n,m} \sum_j \omega_{R_{a,\delta}(j)}(x,y,t) \omega_{R_\delta}(j) \lesssim_{n,m} \tilde{\omega}_{R_{\delta}}(x,y,t),
\end{equation}
where each \(R_{a,\delta}(j)\) is a copy of \(R_{a,\delta}\) centered at \(j \in \delta^{-1}\mathbb{Z}^{n+m} \times a^{2-\theta}\delta^{-1}\mathbb{Z}.\) The inequality \eqref{dc3} is a weighted covering inequality that addresses the mismatch between the rectangles \( R_{a,\delta} \) and \( R_{\delta} \). Consequently, the expression \( \sum_j \omega_{R_{a,\delta}(j)}(x,y,t) \omega_{R_\delta}(j) \approx \omega_{R_\delta}(x,y,t) \) cannot hold. To resolve this issue, we utilize slightly different rectangular weight functions in \eqref{dc3}.
By  \eqref{dc3}, \eqref{dc2} and Minkowiski inequality for $p \geq 2,$ we have
\begin{align*}
 & \norm{E_{\Delta_{\xi_{1}} \times A_a} g(x)}_{L^p_{t,y}(\omega_{R_\delta}(x))}\\
 & \lesssim_{n,m}  \left( \sum_{j} \omega_{R_\delta}(j) \norm{E_{\Delta_{\xi_{1}} \times A_a} g(x)}_{L^p_{t,y}(\omega_{R_{a,\delta}(j)}(x))}^{p} \right)^{\frac{1}{p}} \\
  & \lesssim_{\varepsilon} \delta^{-\varepsilon} \left( \sum_{j} \omega_{R_\delta}(j) \left( \sum_{\Delta_{\xi_{2}} \in \operatorname{Part}_{\delta^{\frac{1}{2}}}(A_a)} \norm{E_{\Delta_{\xi_{1}} \times \Delta_{\xi_{2}}} g(x)}_{L^p_{t,y}(\omega_{R_{a,\delta}(j)}(x))}^2 \right)^{\frac{p}{2}} \right)^{\frac{1}{p}} \\
  &\lesssim_{\varepsilon} \delta^{-\varepsilon}
\left( \sum_{\Delta_{\xi_{2}} \in \operatorname{Part}_{\delta^{\frac{1}{2}}}(A_a)} \norm{E_{\Delta_{\xi_{1}} \times \Delta_{\xi_{2}}} g(x)}_{L^p_{t,y}(\tilde{\omega}_{R_\delta}(x))}^2 \right)^{\frac{1}{2}}.
\end{align*}
 This proves \eqref{dc2} implies \eqref{dc1}. 
Now we will prove \eqref{dc2}. Let $\phi(\xi_{2})=|\xi_{2}|^\theta$ and for
$\xi'_{2}\in A_a,$ the Taylor expansion gives
\[
  a^{2-\theta}\,\phi(\xi'_{2}+h)
  = a^{2-\theta}\Bigl(\phi(\xi'_{2}) + \phi'(\xi'_{2})\,h 
    + \tfrac12\,h^{T}\,\phi''(\xi'_{2})\,h\Bigr)
    + O\bigl(a^{-1}\lvert h\rvert^3\bigr),
\]
where 
$$\phi''(\xi'_{2})=\theta \abs{\xi'_{2}}^{\theta-2}(\mathcal{I}_{m}+(\theta-2) \frac{\xi'_{2}{\xi'_{2}}^{T}}{\abs{\xi'_{2}}^2}),$$
here $\mathcal{I}_{m}$ is identity matrix of order $m$ and ${\xi'_{2}}^{T}$ is transpose of column matrix $\xi'_{2}.$
Since $\theta |\xi'_2|^{\theta - 2}$ (with multiplicity $m - 1$) and $\theta(\theta - 1) |\xi'_2|^{\theta - 2}$ are the eigenvalues of the matrix $\phi''(\xi'_2)$, the matrix $\phi''(\xi'_2)$ is positive definite when $\theta > 1$.
So,  for $\theta>1,$ we have  $a^{2-\theta} \abs{h^{T}\,\phi''(\xi'_{2})\,h} \approx \abs{h}^2.$ 

When $\abs{h} \leq \delta^{\frac{1}{2}- \sigma}$ with $\sigma=\frac{\varepsilon}{3},$ the error term $O\bigl(a^{-1}\lvert h\rvert^3\bigr)=O(\delta),$ since $a \geq \delta^{\frac{1}{2}-\varepsilon}.$ Thus if $\abs{\xi_{2}-\xi'_{2}} \leq \delta^{\frac{1}{2}- \sigma},$ the surface $S_{a}(\xi_2)=(\xi_{2},a^{2-\theta} \abs{\xi_{2}}^{\theta})$ is in the $\delta$-neighborhood of the paraboloid
\begin{equation}\label{paraboloid}
    \tilde{S}_{a}(\xi_{2})=\left(\xi_{2},a^{2-\theta}\Bigl(\phi(\xi'_{2}) + \phi'(\xi'_{2})\,(\xi_{2}-\xi'_{2}) 
    + \tfrac12\,(\xi_{2}-\xi'_{2})^{T}\,\phi''(\xi'_{2})\,(\xi_{2}-\xi'_{2})\Bigr)\right).
\end{equation}
Now we rescale the time variable. Let 
\begin{equation*}
    E_{\Delta_{\xi_{1}} \times \Delta_{\xi_{2}}, S_{a}} g(x,y,t)= \int_{\Delta_{\xi_{2}}}\left( \int_{\Delta_{\xi_{1}}} g(\xi_{1},\xi_{2}) e^{2\pi i (x \cdot \xi_{1}+a^{2-\theta} t \abs{\xi_{1}}^{\theta})} \, d\xi_{1} \right) e^{2\pi i (y \cdot \xi_{2}+a^{2-\theta} t \abs{\xi_{2}}^{\theta})} \, d\xi_{2}.
\end{equation*}
Then
$$E_{\Delta_{\xi_{1}} \times \Delta_{\xi_{2}}} g(x,y,t)=E_{\Delta_{\xi_{1}} \times \Delta_{\xi_{2}}, S_{a}} g(x,y, a^{\theta-2}t)$$
and 
$$\norm{E_{\Delta_{\xi_{1}} \times \Delta_{\xi_{2}}} g(x)}_{L^{p}_{t,y}(R_{a,\delta})}=a^{\frac{2-\theta}{p}} \norm{E_{\Delta_{\xi_{1}} \times \Delta_{\xi_{2}}, S_{a}} g(x)}_{L^{p}_{t,y}(Q_{\delta})},$$
where $Q_{\delta}$ is a cube of side length $\delta^{-1}.$
In view of the scaling, to prove \eqref{dc2},  it suffices  to show 
\begin{align}
 \norm{E_{\Delta_{\xi_{1}} \times A_a, S_a} g(x)}_{L^p_{t,y}(\omega_{Q_{\delta}}(x))} 
&\lesssim_{\varepsilon} \delta^{-\varepsilon}
\left( \sum_{\Delta_{\xi_{2}} \in \operatorname{Part}_{\delta^{\frac{1}{2}}}(A_a)} \norm{E_{\Delta_{\xi_{1}} \times \Delta_{\xi_{2}}, S_a} g(x)}_{L^p_{t,y}(\omega_{Q_{\delta}}(x))}^2 \right)^{\frac{1}{2}}.
\end{align}
It is sufficient to demonstrate that the smallest constant \( K_{p}(\delta) \) that satisfies the following inequality fulfills the condition \( K_{p}(\delta) \lesssim_{\varepsilon} \delta^{-\varepsilon} ,\)
\begin{align}\label{dc4}
 \norm{E_{\Delta_{\xi_{1}} \times A_a, S_a} g(x)}_{L^p_{t,y}(\omega_{Q_{\delta}}(x))} 
&\lesssim  K_{p}(\delta)
\left( \sum_{\Delta_{\xi_{2}} \in \operatorname{Part}_{\delta^{\frac{1}{2}}}(A_a)} \norm{E_{\Delta_{\xi_{1}} \times \Delta_{\xi_{2}}, S_a} g(x)}_{L^p_{t,y}(\omega_{Q_{\delta}}(x))}^2 \right)^{\frac{1}{2}}.
\end{align}
Note that $$\sum_{j} \omega_{Q_{\delta^{1-2\sigma}}(j)}(x,y,t) \omega_{Q_{\delta}}(j) \approx \omega_{Q_{\delta}}(x,y,t),$$
where each $Q_{\delta^{1-2\sigma}}(j)$ is a copy of $Q_{\delta^{1-2\sigma}}$ centered at $j \in \delta^{-1+2\sigma} \mathbb{Z}^{n+m+1},$ and the implicit constants only depend on $m,n.$ By \eqref{dc4} and Minkowski inequality, we get 
\begin{align*}
    \norm{E_{\Delta_{\xi_{1}} \times A_a, S_a} g(x)}_{L^p_{t,y}(\omega_{Q_{\delta}}(x))} \leq C K_{p}(\delta^{1-2\sigma})\left( \sum_{\tau_{\xi_{2}} \in \operatorname{Part}_{\delta^{\frac{1}{2}-\sigma}}(A_a)} \norm{E_{\Delta_{\xi_{1}} \times \tau_{\xi_{2}}, S_a} g(x)}_{L^p_{t,y}(\omega_{Q_{\delta}}(x))}^2 \right)^{\frac{1}{2}}.
\end{align*}
Applying Theorem \ref{BD theorem}, for $2\leq p \leq \frac{2(m+2)}{m},$ we get 
\begin{equation*}
    \norm{E_{\Delta_{\xi_{1}} \times \tau_{\xi_{2}}, S_a} g(x)}_{L^p_{t,y}(\omega_{Q_{\delta}}(x))}  \leq D_{p}(\delta) \left( \sum_{\Delta_{\xi_{2}} \in \operatorname{Part}_{\delta^{\frac{1}{2}}}(A_a), \Delta_{\xi_{2}} \subset \tau_{\xi_{2}}} \norm{E_{\Delta_{\xi_{1}} \times \Delta_{\xi_{2}}, S_a} g(x)}_{L^p_{t,y}(\omega_{Q_{\delta}}(x))}^2 \right)^{\frac{1}{2}},
\end{equation*}
where the decoupling constant $D_{p}(\delta) \leq C_{\varepsilon}\delta^{-\frac{\varepsilon^{2}}{2}}$ for all $\varepsilon >0.$
Thus $$
K_p(\delta) \leqslant C D_p(\delta) K_p\left(\delta^{1-2 \sigma}\right) .
$$

Recall $\sigma=\varepsilon / 3$. We iterate to get
$$
\begin{aligned}
K_p(\delta) & \leqslant C^k D_p(\delta) D_p\left(\delta^{1-2 \sigma}\right) \ldots D_p\left(\delta^{(1-2 \sigma)^{k-1}}\right) K_p\left(\delta^{(1-2 \sigma)^k}\right) \\
& \leqslant C^k C_{\varepsilon}^k \delta^{-\frac{1}{2} \varepsilon^2\left(1+(1-2 \sigma)+\ldots+(1-2 \sigma)^{k-1}\right)} K_p\left(\delta^{(1-2 \sigma)^k}\right) \\
& =C^k C_{\varepsilon}^k \delta^{-\frac{3}{4} \varepsilon\left(1-(1-2 \sigma)^k\right)} K_p\left(\delta^{(1-2 \sigma)^k}\right)
\end{aligned}
$$
Recall that $\delta^{1 / 2-\varepsilon} \leqslant a \leqslant 1 / 2$. We choose $k$ such that $\delta^{(1-2 \sigma)^k} \approx a^2 \leqslant 1 / 4$, then $K_p\left(\delta^{(1-2 \sigma)^k}\right) \approx 1$ and $k \lesssim \log \log \left(\delta^{-1}\right)$. Thus (for detailed calculations, see \cite{biswas2020})
$$
K_p(\delta) \lesssim C^k C_{\varepsilon}^k \delta^{-\frac{3}{4} \varepsilon\left(1-(1-2 \sigma)^k\right)} \lesssim_{\varepsilon} \delta^{-\varepsilon}.
$$
Now we will prove for forth part. Since 
$$\resizebox{\textwidth}{!}{$\mathrm{IV}=\bigcup_{(k_{1},k_{2})=(1,1)}^{(K_1,K_{2})}\left\{(\xi_{1}, \xi_{2}) \in \mathcal{Q}_{n+m}:2^{k_{1}-1} \delta^{\frac{1}{2}-\varepsilon} \leq|\xi_{1}| \leq 2^{k_{1}} \delta^{\frac{1}{2}-\varepsilon}, 2^{k_{2}-1} \delta^{\frac{1}{2}-\varepsilon} \leq|\xi_{2}| \leq 2^{k_{2}} \delta^{\frac{1}{2}-\varepsilon}\right\}.$}$$
Again we will decompose the cube \(\mathcal{Q}_{d}\) dyadically into 
$$\resizebox{\textwidth}{!}{$\mathcal{Q}_{d}= \left\{\xi \in \mathcal{Q}_{d}:|\xi| \leq \delta^{\frac{1}{2}-\varepsilon}\right\} \cup \left(\bigcup_{k=1}^{K}\left\{\xi \in \mathcal{Q}_{d}: 2^{k-1} \delta^{\frac{1}{2}-\varepsilon} \leq|\xi | \leq 2^k \delta^{\frac{1}{2}-\varepsilon}\right\}\right).$}$$
Here $K \approx \log(\delta^{-1}).$ It is enough to prove that for any $\delta^{\frac{1}{2}-\varepsilon} \leq a \leq \frac{1}{2}$ and the set 
$$B_{a}=\{\xi \in \mathcal{Q}_{d}: a \leq |\xi| \leq 2a\} \cap \mathrm{IV},$$
we have  
\begin{equation}\label{dc7}
    \norm{E_{B_{a}} g}_{L^p_{t,x,y}(\omega_{R_\delta})} \lesssim_{\varepsilon} \delta^{-\varepsilon}
\left( \sum_{\Delta \in \operatorname{Part}_{\delta^{\frac{1}{2}}}(B_a)} \norm{E_{\Delta} g}_{L^p_{t,x,y}(\tilde{\omega}_{R_\delta})}^2 \right)^{\frac{1}{2}}.
\end{equation}
Let $\xi=(\xi_{1},\xi_{2})\in\mathbb{R}^{n+m}$ and $|\xi| = \sqrt{|\xi_{1}|^2 + |\xi_{2}|^2}.$
Let $\theta>1$ and define
\[
  S=\left\{\left(\xi_{1},\xi_{2},|\xi_{1}|^{\theta}+|\xi_{2}|^\theta\right) \in \mathbb{R}^{n} \times \mathbb{R}^m \times \mathbb{R}: (\xi_{1},\xi_{2}) \in\mathcal{Q}_{n+m}\right\}.
\]

Let $\phi(\xi)=\phi(\xi_{1},\xi_{2})=|\xi_{1}|^\theta+|\xi_{2}|^\theta.$
For $\xi'=(\xi'_{1},\xi'_{2}) \in B_a$, the Taylor expansion gives
\[
  a^{2-\theta}\,\phi(\xi'+h')
  = a^{2-\theta}\Bigl(\phi(\xi') + \phi'(\xi')\,h' 
    + \tfrac12\,{h'}^{T}\,\phi''(\xi')\,h'\Bigr)
    + O\bigl(a^{-1}\lvert h'\rvert^3\bigr),
\]
where
\[
  h' = \begin{pmatrix}h'_1\\[6pt]h'_2\end{pmatrix}
  \in\mathbb{R}^{n+m}, 
  \quad
  h'_1\in\mathbb{R}^n,\;
  h'_2\in\mathbb{R}^m,
\]
and
\[
  \phi''(\xi')
  = \begin{pmatrix}
      H_{\xi_{1}}\phi(\xi') & 0\\[6pt]
      0            & H_{\xi_{2}}\phi(\xi')
    \end{pmatrix},
\]
where $H_{\xi_{1}}\phi(\xi')$ (resp.\ $H_{\xi_{2}}\phi(\xi')$), the Hessian of $\phi$ at $\xi'$ in the $\xi_{1}$–variables (resp.\ $\xi_{2}$–variables)  with 
$$H_{\xi_{1}}\phi(\xi')=\theta \abs{\xi'_{1}}^{\theta-2}(\mathcal{I}_{n}+(\theta-2) \frac{\xi'_{1} {\xi'_{1}}^{T}}{\abs{\xi'_{1}}^2}),$$
and 
$$H_{\xi_{2}}\phi(\xi')=\theta \abs{\xi'_{2}}^{\theta-2}(\mathcal{I}_{m}+(\theta-2) \frac{\xi'_{2} {\xi'_{2}}^{T}}{\abs{\xi'_{2}}^2}).$$
Hence
\[
  {h'}^{T}\,\phi''(\xi')\,h'
  = {h'_1}^{T}\,H_x\phi(\xi')\,h'_1
  + {h'_2}^{T}\,H_y\phi(\xi')\,h'_2.
\]
If $\theta>1,$ then
\[
  a^{2-\theta}\bigl\lvert {h'}^T\phi''(\xi')\,h'\bigr\rvert
  \;\approx\;|h'_1|^2 + |h'_2|^2 \;=\; |h'|^2.
\]
And $a^{2-\theta} \abs{{h'}^{T}\,\phi''(\xi')\,h'} \approx \abs{h'}^2,$ whenever $\theta>1.$ When $\abs{h'} \leq \delta^{\frac{1}{2}- \sigma}$ with $\sigma=\frac{\varepsilon}{3},$ the error term $O\bigl(a^{-1}\lvert h'\rvert^3\bigr)=O(\delta),$ since $a \geq \delta^{\frac{1}{2}-\varepsilon}.$ Thus if $\abs{\xi-\xi'} \leq \delta^{\frac{1}{2}- \sigma},$ the surface $S_{a}=(\xi,a^{2-\theta} (\abs{\xi_{1}}^{\theta}+\abs{\xi_{2}}^{\theta}))$ is in the $\delta$-neighbourhood of the paraboloid
\begin{equation*}
    \tilde{S}_{a}(\xi)=\left(\xi,a^{2-\theta}\Bigl(\phi(\xi') + \phi'(\xi')\,(\xi-\xi') 
    + \tfrac12\,(\xi-\xi')^{T}\,\phi''(\xi')\,(\xi-\xi')\Bigr)\right).
\end{equation*}
Now the proof of the inequality \eqref{dc7} for \(2 \leq p \leq \frac{2(d+2)}{d},\) follows the same method as outlined in \cite[Lemma 4]{wang2025strichartz}.
\end{proof}
\subsection{Decoupling  inequality  on waveguide manifold}\label{decoupling sec on wave}
\begin{lemma}[Decoupling-type lemma]\label{l2 decoupling wave}
Suppose \( g, g_{l} \) are Schwartz functions on \(\mathbb{R}^n \times \mathbb{T}^m\) with \( g = P_{\leq N} g \) such that
\[
g(x, y) = \sum_{\substack{l \in \mathbb{Z}^m \\ |l| \leq N}} \int_{[-N, N]^n} \widehat{g}_{l}(\xi_{1}) e^{2\pi i (x \cdot \xi_{1} + y \cdot l)} \, d\xi_{1},
\]
where $\widehat{g}_{l}(\xi_{1})=\widehat{g}(\xi_{1},l),$ for all $(\xi_{1},l) \in \mathbb{R}^n \times \mathbb{Z}^m.$
Cover \([-N, N]^n\) by finitely overlapping cubes \( Q_k \) of side length \(\sim 1\). Let \(\{\phi_k\}\) be a partition of unity adapted to the \(\{Q_k\}\), and denote the component of $g$ adapted to $Q_k \times \left \{\nu \right\}$ by 
\[g_{\Theta_{\nu,k}} = e^{2\pi i y \cdot \nu} \mathcal{F}^{-1}_x (\hat{g}(\cdot, \nu) \phi_k), \quad (\nu \in \mathbb Z^m, \ |\nu|\leq N).\]
 Also \(2 \leq p \leq p^* = \frac{2(n+m+2)}{n+m} .\) Then, for any time interval $I$, when $1 < \theta < 2$, the interval $I$ has length $\sim N^{2-\theta}$, and for $\theta \ge 2$, the interval $I$ has length $\sim 1$.

\[
\|e^{i t {(-\Delta)}^{\frac{\theta}{2}}} g\|_{L^p(I \times \mathbb{R}^n \times \mathbb{T}^m )} \lesssim_\varepsilon N^\varepsilon \left( \sum_{\nu,k} \|e^{i t {(-\Delta)}^{\frac{\theta}{2}}} g_{\Theta_{\nu,k}} w_I \|^2_{L^p(\mathbb{R} \times \mathbb{R}^n \times  \mathbb{T}^m )} \right)^\frac{1}{2},
\]
where \( w_I \) is a bump function adapted to \( I \) and \( \theta >1.\)
\end{lemma}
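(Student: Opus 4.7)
The plan is to reduce, by a parabolic rescaling, to a single application of the new decoupling inequality Lemma~\ref{decoupling inequality for surface on waveguided}, which is tailored precisely to the surface $(\xi_1,\xi_2,|\xi_1|^\theta+|\xi_2|^\theta)$ and requires only $R\ge\delta^{-\max\{1,\theta/2\}}$. The extra derivative loss $N^{(2-\theta)/p}$ appearing in Lemma~\ref{old l2 decoupling} for $1<\theta<2$ originates from a prior strategy that linearizes $|\xi|^\theta$ to a paraboloid on time-scales of length $\sim N^\theta$ and then invokes Bourgain--Demeter, forcing one to sum $\sim N^{2-\theta}$ sub-intervals to cover $|I|\sim 1$ and losing a factor $N^{(2-\theta)/p}$ in the $\ell^2\hookrightarrow\ell^p$ embedding. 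Lemma~\ref{decoupling inequality for surface on waveguided} instead treats the true surface on a ball large enough to contain $B(0,N)\times[0,N^\theta]$ in one shot, so no such splitting is ever needed.

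Concretely, first I would perform the changes of variable $\xi_1=N\eta_1$, $\nu=N\eta_2\in\tfrac1N\mathbb Z^m\cap[-1,1]^m$ and $\tilde x=Nx$, $\tilde y=Ny$, $\tilde t=N^\theta t$. This turns $e^{it(-\Delta)^{\theta/2}}g$ into a function $\tilde F(\tilde x,\tilde y,\tilde t)$ which is $N$--periodic in $\tilde y$ and, apart from a discreteness in the $\eta_2$-variable, has the form of the extension operator $E_{[-1,1]^{n+m}}$ on the surface $(\eta,|\eta_1|^\theta+|\eta_2|^\theta)$; moreover
\[
\|e^{it(-\Delta)^{\theta/2}}g\|_{L^p(I\times\mathbb R^n\times\mathbb T^m)}^p
= N^{-(n+m+\theta)}\,\|\tilde F\|_{L^p(\mathbb R^n\times[0,N]^m\times N^\theta I)}^p.
\]
Second, using the $\tilde y$-periodicity and a Schwartz bump adapted to a ball $B_R\subset\mathbb R^{n+m+1}$ of radius $R=N^{\max\{2,\theta\}}$, I compare this norm to the $L^p(B_R)$ norm of a continuous extension $E_{[-1,1]^{n+m}}G$, where each $\delta_{\nu/N}$ in the discrete sum in $\eta_2$ is replaced by a bump of width $\sim 1/R$; by Heisenberg the smoothing is essentially invisible on the scale of $B_R$. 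Third, I apply Lemma~\ref{decoupling inequality for surface on waveguided} with $\delta=N^{-2}$: the partition cubes of side $\delta^{1/2}=N^{-1}$ in $[-1,1]^{n+m}$ match, after undoing the scaling, exactly the frequency cells $Q_k\times\{\nu\}$ and hence the pieces $g_{\Theta_{\nu,k}}$, and the decoupling constant $\delta^{-\varepsilon}=N^{2\varepsilon}$ becomes the stated $N^\varepsilon$ by relabeling $\varepsilon\mapsto\varepsilon/2$. Undoing the rescaling and absorbing the weight $\omega_{B_R}$ into the bump $w_I$ on the RHS produces the claim.

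The main obstacle will be the careful bookkeeping in the second step: one must verify that replacing the discrete sum over $\nu\in\mathbb Z^m\cap[-N,N]^m$ by a $1/R$-smoothed integral in $\eta_2$ produces the correct $\ell^2$-type sum on the right side, that the periodic lift $\mathbb T^m\rightsquigarrow[0,N]^m\subset\mathbb R^m$ combined with the cutoff to $B_R$ causes no spurious loss, and that the cascade of powers of $N$ coming from the rescaling, the periodization, and the smoothing cancel cleanly. The point where the improvement over Lemma~\ref{old l2 decoupling} becomes visible is precisely the regime $1<\theta<2$: because Lemma~\ref{decoupling inequality for surface on waveguided} accommodates the required radius $R=N^2>N^\theta$ in a single application, the $\ell^2\hookrightarrow\ell^p$ factor $N^{(2-\theta)/p}$ that sire2023 paid for slicing $I$ into $N^\theta$-pieces is never generated.
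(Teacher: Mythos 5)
Your proposal follows essentially the same route as the paper's proof: rescale to frequency support in $[-1,1]^{n+m}$, approximate the discrete extension operator by a continuous one, apply Lemma~\ref{decoupling inequality for surface on waveguided} with $\delta=N^{-2}$ so that the required radius $R\geq\delta^{-\max\{1,\theta/2\}}=N^{\max\{2,\theta\}}$ fits the rescaled spacetime box in a single application, and undo the rescaling; and you correctly identify that this one--shot radius condition is precisely what eliminates the $N^{(2-\theta)/p}$ loss of Lemma~\ref{old l2 decoupling} for $1<\theta<2$. The one step you gloss over but the paper handles explicitly is the preliminary localization of the $x$-variable (which ranges over all of $\mathbb{R}^n$) to finitely overlapping balls $B_l$ of radius $N$ followed by a Minkowski summation against the weights $\omega_{B_R}$; also, the paper's discrete-to-continuous comparison uses an $L^1$-normalized mollifier of width $\delta<1/N$ together with Fatou and Lebesgue differentiation in the limit $\delta\to 0$, which is cleaner than the fixed $1/R$-width smoothing you suggest and avoids the ``essentially invisible by Heisenberg'' hand-wave.
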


We note that the decoupling inequality on waveguide manifold was first proved by  Sire-Yu-Yue-Zhao in  \cite[Lemma 4.2]{sire2023}, where the phase function  $\phi(\xi_1, \xi_2)=|\xi_1|^{\theta}+|\xi_2|^2$  or more generally nondegenerate type phase function  away from the origin.
And so their proof  could apply Bourgain-Demeter's decoupling theorem and employed the strategy used by  Schippa  in \cite{schippa2020}.
It is worth noting that in Lemma \ref{l2 decoupling wave}  the phase function  exhibits degeneracy on $(\{0\} \times [-1,1]^m) \cup ([-1,1]^n \times\{0\})$ and so we cannot apply  Bourgain-Demeter's decoupling theorem directly. See Subsection \ref{cpf}. To overcome this difficulties, we  have  had  established Theorem \ref{decoupling inequality for surface on waveguided}.

\begin{remark}[proof strategy] The following steps are in order:
\begin{itemize}
    \item First, we will reduce the problem to establishing an $L^p$-norm estimate for  
    \[ u(t, x, y) = e^{it(-\Delta)^{\frac{\theta}{2}}}g(x, y) \]  
    over \( I \times B_l \times \mathbb{T}^m \), by proving a key inequality \eqref{Bl}, through decomposing \( g \) into pieces \( g_{\Theta_{\nu,k}} \), where \( \Theta_{\nu,k} \) is a box of \( (n+m) \)-dimensions in a tiling of \( [-1,1]^{n+m} \).

    \item By rescaling \( u \) to adjust the frequency support and defining an extension operator \( Ef \), we adjust the norm via a change of variables and the periodicity in the \( y \)-variable, which gives a relation between the \( L^p \)-norms of \( u \) and \( Ef \).

    \item Then, we define an auxiliary operator \( \widetilde{E} \) with a smoothed function \( f^\delta \) to bound the \( L^p \)-norm of \( Ef \). Using the Lebesgue differentiation theorem and Fatou’s lemma, we obtain the inequality \eqref{discrete to continuous}.

    \item Next, we tile the domain with boxes \( \Theta_{\nu,k} \) and apply a decoupling inequality for the hypersurface i.e., Theorem \ref{decoupling inequality for surface on waveguided}, to bound the \( L^p \)-norm of \( \widetilde{E}f \). Then, specializing to \( f^\delta \) and taking a limit, we finally rescale and derive a uniform pointwise estimate for \( u^\delta \) to complete the \( L^p \)-norm estimate.
\end{itemize}    
\end{remark}
  \begin{proof}[\textbf{Proof of Lemma \ref{l2 decoupling wave}}]
    Let \( B_l \subset \mathbb{R}^n \) be a fixed ball of radius \( N \), and let \( w_l \) be a smooth weight adapted to \(I \times B_l  \). To prove Lemma \ref{l2 decoupling wave}, it will suffice to show that
\begin{equation}\label{Bl}
    \|e^{it{(-\Delta)}^{\frac{\theta}{2}}} g\|_{L^p( I \times B_l \times \mathbb{T}^m)} \lesssim_\varepsilon N^{\varepsilon} \left( \sum_{\nu,k} \|e^{it{(-\Delta)}^{\frac{\theta}{2}}} g_{\Theta_{\nu,k}} \|_{L^p(w_l)}^2 \right)^{\frac{1}{2}}.
\end{equation}

Then to prove the full estimate on \( \mathbb{R}^n \times \mathbb{T}^m \), we can choose a finitely overlapping collection of balls \( B_l \) of radius \( N \) that cover \( \mathbb{R}^n \), and then apply \eqref{Bl} in each \( B_l \) and use Minkowski's inequality to sum:
\begin{align*}
\|e^{it{(-\Delta)}^{\frac{\theta}{2}}} g\|_{L^p( I \times \mathbb{R}^n \times \mathbb{T}^m)}^p &\leq \sum_l \|e^{it{(-\Delta)}^{\frac{\theta}{2}}} g\|_{L^p( I \times B_l \times \mathbb{T}^m)}^p \\
&\lesssim_\varepsilon N^{\varepsilon} \sum_l \left( \sum_{\nu,k} \|e^{it{(-\Delta)}^{\frac{\theta}{2}}} g_{\Theta_{\nu,k}} \|_{L^p(w_l)}^2 \right)^{\frac{p}{2}} \\
&\lesssim_\varepsilon N^{\varepsilon} \left( \sum_{\nu,k} \|e^{it{(-\Delta)}^{\frac{\theta}{2}}} g_{\Theta_{\nu,k}} \|_{L^p(w)}^2 \right)^{\frac{p}{2}},
\end{align*}
where the \( w_l \) are weights adapted to \(I \times B_l  \) and \( w = \sum_l w_l \). Now if \( \vartheta \) is a smooth weight on \( \mathbb{R} \) that rapidly decays outside \(I\) it is easy to check that \( w \leq C \vartheta \), so Lemma \ref{l2 decoupling wave} will follow.  
Denote  \[ u(t, x, y) = e^{it{(-\Delta)}^{\frac{\theta}{2}}} g(x, y).\] We may rescale  \( u_0=u(t=0,x,y)\) to have frequency support in \([-1, 1]^{n+m}\). 
In fact, we have 
\begin{equation}\label{u function}
\resizebox{0.9\textwidth}{!}{$
 u(N^{-\theta}t, N^{-1}x, N^{-1}y) = N^n \int_{[-1,1]^{n}} \sum_{\nu \in N^{-1}\mathbb{Z}^m \cap [-1,1]^{m}} \hat{g}(N\xi, N\nu) e^{2 \pi i  (x\cdot \xi + y\cdot \nu + t(|\xi|^{\theta} + |\nu|^{\theta}))} d\xi.   
$}
\end{equation}
Denote
 \[ f(\xi, \nu) = \hat{g}(N\xi, N\nu)  \quad \text{and} \quad  \Lambda_N = N^{-1}\mathbb{Z}^m \cap [-1, 1]^m.\]  Then  \( f \) is supported on \( [-1,1]^{n} \times \Lambda_N \). Since  $P_{\leq N}g=g,$ \( f \) can be viewed as a function on \( n -\)dimensional cubes of size \(\sim 1\) that are \(\sim N^{-1}\)-separated in \( [-1,1]^{n+m} \). 
Let \( Ef \) denote the extension operator
\[
Ef(t,x,y) = \sum_{\nu \in \Lambda_N} \int_{[-1,1]^{n}} f(\xi, \nu) e^{2 \pi i  (x\cdot \xi + y\cdot \nu + t(|\xi|^{\theta} + |\nu|^{\theta}))} d\xi.
\]
After applying a change of variables on the spatial side and using periodicity in the \( y \)-variable in \eqref{u function}, we get
\begin{align*}
\|u\|_{L^p(I \times B_N \times \mathbb{T}^m )} &= N^n N^{-\frac{n+m+\theta}{p}} \|Ef\|_{L^p( N^{\theta}I \times B_{N^{2}} \times N\mathbb{T}^m)} \\
&= N^{n(1-\frac{1}{p})-\frac{1}{p}(\theta+2m)} \|Ef\|_{L^p( N^{\theta}I \times B_{N^{2}} \times [0,N^{2}]^{m} )}.
\end{align*}
We may introduce the continuous extension \(\widetilde{E}\)  of the operator $E$. Specifically,  the operator \(\widetilde{E}\) on \(C^\infty([-1, 1]^{n+m})\) is defined by
\[
\widetilde{E}f(t, x, y) = \int_{[-1,1]^{n}} \int_{[-1,1]^{m}} f(\xi_1, \xi_2) e^{2 \pi i (x\cdot \xi_1 + y\cdot \xi_2 + t(|\xi_1|^{\theta} + |\xi_2|^{\theta}))} d\xi_1 d\xi_2.
\]
We shall see that    $\widetilde{E}$ gives the transition from the waveguide to  usual extension operator associated to the fractional dispersion in  $\mathbb R^{n+m+1}$.
 Given a function \(f\) on \([-1, 1]^n \times \Lambda_N\), let
\[
f^{\delta}(\xi_1, \xi_2) = \sum_{\nu \in \Lambda_N} c_m \delta^{-m} \mathbf{1}_{\{|\xi_2 - \nu| \leq \delta\}} f(\xi_1, \nu), \quad \delta < \frac{1}{N},
\]
where \(c_m\) is a dimensional constant chosen for normalization. Then by Fatou lemma, we have
$$ \int_{N^{\theta}I} \int_{B_{N^2}} \int_{N^{2}\mathbb{T}^m}\liminf_{\delta \to 0} \abs{\widetilde{E}f^{\delta}}^{p} \, dx \, dy \, dt \leq \liminf_{\delta \to 0} \int_{N^{\theta}I} \int_{B_{N^2}} \int_{N^{2}\mathbb{T}^m} \abs{\widetilde{E}f^{\delta}}^{p} \, dx \, dy \, dt ,$$
and by Lebesgue differentiation theorem, we get
\[ \lim_{\delta \to 0} \widetilde{E}f^{\delta} (t,x, y)= Ef (t,x, y).\]
Thus we have
\begin{equation}\label{discrete to continuous}
    \|Ef\|_{L^p(N^{\theta}I \times B_{N^2} \times N^{2}\mathbb{T}^m )} \leq \liminf_{\delta \to 0} \|\widetilde{E}f^{\delta}\|_{L^p(N^{\theta}I \times B_{N^2} \times [0, N^{2}]^m )},
\end{equation}
where  \(N^{2}\mathbb{T}^m=[0, N^{2}]^m\). We will begin by estimating \(\widetilde{E}f\) for arbitrary \(f\) on \([-1,1]^{n+m}\) before specializing to \(f^{\delta}\) and passing to the limit later in the argument.\\
Choose a finitely overlapping collection of \((n+m)\)-dimensional boxes \(\Theta_{\nu,k} = I_k \times I_\nu \subset [-1,1]^{n} \times [-1,1]^{m}\) with the following properties:
\begin{itemize}
    \item[(i)] Each \(\Theta_{\nu,k}\) has side lengths \(\sim N^{-1}\).
    \item[(ii)] \(I_\nu\) is centered at \(\nu \in \Lambda_N\).
    \item[(iii)] \(\nu\) varies over an \(\sim N^{-1}\)-separated subset of \(\Lambda_N\) such that the cubes \(I_\nu\) cover \([-1, 1]^m\).
\end{itemize}

Such a collection of \(\Theta_{\nu,k}\) yields a finitely overlapping tiling of \([-1,1]^{n+m}\) by boxes of side length \(\sim N^{-1}.\) Let \(\{\varphi_{\nu,k}\}\) be a smooth partition of unity subordinate to this cover, and let \(f_{\nu,k} = f \cdot \varphi_{\nu,k}\). By covering the set $N^{\theta}I \times B_{N^2} \times [0, N^{2}]^m$ with finitely many cubes $R_{N}$ of size $N^{\max{\{2,\theta\}}} \times {N^2} \times \cdots \times {N^2},$ and applying Minkowski's inequality along with the decoupling inequality for the hypersurface, by Theorem \ref{decoupling inequality for surface on waveguided} for $2 \leq p \leq \frac{2(d+2)}{d},$ we get
\[
\|\widetilde{E}f\|_{L^p(N^{\theta}I \times B_{N^2} \times [0, N^{2}]^m )} \lesssim_\varepsilon N^{\varepsilon} \left( \sum_{\nu,k} \|\widetilde{E}f_{\nu,k}\|_{L^p(w_{R_{N}})}^2 \right)^{1/2},
\]
where \(w_{R_{N}}\) is a bump function adapted to \(N^{\theta}I \times B_{N^2} \times [0, N^{2}]^m \), with the \(f_{\nu,k}\) supported on finitely overlapping boxes of side lengths \(\sim 1/N\). Specializing to \(f^{\delta}\) and taking a limit as in \eqref{discrete to continuous}, we get
\[
\|Ef\|_{L^{p}(N^{\theta}I \times B_{N^2} \times N^{2}\mathbb{T}^m )} \lesssim_\varepsilon \liminf_{\delta \to 0} N^{\varepsilon} \left( \sum_{\nu,k} \|\widetilde{E}f_{\nu,k}^{\delta}\|_{L^{p}(w_{R_{N}})}^2 \right)^{1/2}.
\]
Then as a consequence of the scaling \eqref{Bl}, we get
\[
\|u\|_{L^{p}(I \times B_N \times \mathbb{T}^m )} \lesssim_\varepsilon \liminf_{\delta \to 0} N^{n(1-\frac{1}{p})-\frac{1}{p}(\theta+2m) + \varepsilon} \left( \sum_{\nu,k} \|\widetilde{E}f_{\nu,k}^{\delta}\|_{L^{p}(w_{R_{N}})}^2 \right)^{1/2}.
\]
where as above \( f(\xi, \nu) = \hat{g}(N \xi, N\nu) \) for \( \nu \in \Lambda_N \). Now by rescaling as before and writing out the definition of \( f^\delta \) we get
\[
N^{n(1-\frac{1}{p})-\frac{1}{p}(\theta+2m)} \|\widetilde{E}f_{\nu,k}^\delta \|_{L^{p}(w_{R_{N}})} \approx N^{-\frac{m}{p}} \| u_\delta \|_{L^{p}(\vartheta)},
\]
where
\[
u_\delta = \sum_{\substack{l \in \mathbb{Z}^m \\ |l-\nu| \lesssim 1}} \int_{\mathbb{R}^n} \int_{\mathbb{R}^m} c(N\delta)^{-m} \mathbf{1}_{\{|\xi_2-l| \leq N\delta\}} f_{\nu,k}(N^{-1}\xi_1, N^{-1}l)e^{2\pi i(x\cdot \xi_1 + y\cdot \xi_2 + t(|\xi_1|^{\theta} + |\xi_2|^{\theta}))} d\xi_2 \, d\xi_1
\]
and \( \vartheta \) is a suitable bump function adapted to \(I \times B_N \times [0, N]^m \). The desired result now follows from the pointwise estimate
\[
|u_\delta| \leq \sum_{\substack{I \in \mathbb{Z}^m \\ |l-\nu| \lesssim 1}} \left| \int_{\mathbb{R}^n} f_{\nu,k}(N^{-1}\xi_1, N^{-1}l)e^{2\pi i(x\cdot \xi_1 + t|\xi_1|^{\theta})} d\xi_1 \right|,
\]
which is uniform in \( \delta \).
\end{proof}

\section{Proof of main results}
\subsection{Proof of Theorem~\ref{ose} and of Theorem~\ref{Strichartz} for the case $\theta = 2$}

	\begin{proposition}\label{oseP}
		Let $(\frac{1}{q},\frac{1}{p}) \in [A,B]$ in Figure \ref{fig:2} and $\theta>2.$ Then for any $N > 1,$ any $\lambda \in \ell^{\frac{2q}{q+1}}(\mathbb Z), $ and any ONS $(a_{j})_{j}$ in $\ell^{2}(\mathbb Z),$ 
		\begin{equation}\label{PR1}
			\left\| \sum_{j} \lambda_{j} |\mathcal{E}_{N}a_{j} |^{2} \right\|_{L_{t}^{p} (I_N, L_{x}^{q}(\mathbb{T}))} \leq C_{\theta} \| \lambda \|_{\ell^{\alpha'}},
		\end{equation}
		where $I_{N}=[\frac{-1}{2N^{\theta-1}},\frac{1}{2N^{\theta-1}}],$ $\mathcal{E}_{N}$ is defined in \eqref{FEP} by taking $\eta(\xi)=\mathbf{1}_{S_{d,1}}(\xi)$ and $\alpha'=\frac{2 q}{q+1}.$
	\end{proposition}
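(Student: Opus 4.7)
My plan is to dualize via Lemma \ref{PL1} and then derive the required Schatten bound by complex interpolation between the two endpoints of the segment $[A,B]$. Writing $\alpha=\tfrac{2q}{q-1}$ and $\cH=L^2(I_N\times\mathbb T)$, Lemma \ref{PL1} reduces \eqref{PR1} to the bilinear Schatten estimate
\begin{equation}\label{dualprop}
\|W_1\mathcal{E}_N\mathcal{E}_N^* W_2\|_{\Sp^{\alpha}(\cH)}\lesssim_{\theta}\|W_1\|_{L^{2p'}_t L^{2q'}_x}\|W_2\|_{L^{2p'}_t L^{2q'}_x}
\end{equation}
on $I_N\times\mathbb T$ (the case $W_1=W_2=W$ gives the assertion back). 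I will establish \eqref{dualprop} at the two endpoints $A$ and $B$ and then interpolate using Lemma \ref{s-interpolation}, applied to the trivially analytic constant family $T_z\equiv\mathcal{E}_N\mathcal{E}_N^*$ on the strip $0\leq\Re z\leq 1$.

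At $B=(1,0)$, where $(p,q)=(\infty,1)$ and $\alpha=\infty$, Plancherel gives $\|\mathcal{E}_N a\|_{L^\infty_t L^2_x}\lesssim\|a\|_{\ell^2}$, hence $\|W_j\mathcal{E}_N\|_{\ell^2\to\cH}\leq\|W_j\|_{L^2_t L^\infty_x}$, yielding
\[
\|W_1\mathcal{E}_N\mathcal{E}_N^* W_2\|_{\Sp^\infty(\cH)}\leq\|W_1\|_{L^2 L^\infty}\|W_2\|_{L^2 L^\infty}.
\]
At $A=(0,1/2)$, where $(p,q)=(2,\infty)$ and $\alpha=2$, the Hilbert--Schmidt identity (Lemma \ref{PL6}) combined with the kernel formula \eqref{P1} gives
\[
\|W_1\mathcal{E}_N\mathcal{E}_N^* W_2\|_{\Sp^2(\cH)}^2=\iint_{(I_N\times\mathbb T)^2}|W_1(t,x)|^2|K_N(t-t',x-x')|^2|W_2(t',x')|^2\,dt\,dx\,dt'\,dx'.
\]
Proposition \ref{l8} (with a routine adaptation to handle the smooth cutoff $\eta(\xi/N)^2$) supplies the short-time dispersive bound $|K_N(s,y)|\lesssim_{\theta}|s|^{-1/\theta}$ on $[-N^{-(\theta-1)},N^{-(\theta-1)}]\times\mathbb T$, which covers the range of $t-t'$ for $t,t'\in I_N$. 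Inserting this, integrating out $x,x'$, and applying Hardy--Littlewood--Sobolev (Lemma \ref{PL3}) with $\lambda=2/\theta$ produces
\[
\|W_1\mathcal{E}_N\mathcal{E}_N^* W_2\|_{\Sp^2(\cH)}\lesssim_{\theta}\|W_1\|_{L^{2\theta/(\theta-1)}_t L^2_x}\|W_2\|_{L^{2\theta/(\theta-1)}_t L^2_x}.
\]
Because $\theta>2$ forces $\tfrac{2\theta}{\theta-1}<4$, H\"older on the short interval $I_N$ (of length $\sim N^{-(\theta-1)}\leq 1$) produces a factor $|I_N|^{(\theta-2)/(4\theta)}\leq 1$ and upgrades the right-hand side to $\|W_1\|_{L^4_t L^2_x}\|W_2\|_{L^4_t L^2_x}$, the desired $A$-endpoint bound.

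Finally I apply Lemma \ref{s-interpolation} to $T_z\equiv\mathcal{E}_N\mathcal{E}_N^*$ with the two bounds above at $\Re z=0$ (giving $B$) and $\Re z=1$ (giving $A$). Parameterizing $[A,B]$ by $\tau=1-1/q\in[0,1]$, a direct check yields $\tfrac1{r_\tau}=\tfrac\tau2=\tfrac{q-1}{2q}=\tfrac1{\alpha}$, $\tfrac1{p_\tau}=\tfrac\tau2=\tfrac1{2q'}$, and $\tfrac1{q_\tau}=\tfrac{1-\tau}{2}+\tfrac{\tau}{4}=\tfrac{q+1}{4q}=\tfrac1{2p'}$, so the interpolated statement is precisely \eqref{dualprop}. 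Lemma \ref{PL1} then converts this back to \eqref{PR1}, completing the proof. The principal technical input is Proposition \ref{l8}, which supplies the sharp short-time dispersive estimate on the torus; once it is in hand, the main obstacle is the bookkeeping at the Hilbert--Schmidt endpoint, where the hypothesis $\theta\geq2$ is essential to ensure $\tfrac{2\theta}{\theta-1}\leq 4$ so that the H\"older absorption on the short interval $I_N$ runs in the favorable direction.
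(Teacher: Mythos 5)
Your proof is valid for what it sets out to show, but it establishes a weaker statement than the paper's own proof of this proposition. The statement as printed reads $[A,B]$, the Schr\"odinger-admissible line $\tfrac{2}{p}+\tfrac{1}{q}=1$, and your argument faithfully lands on that line. However, the paper's proof works with the constraint $\tfrac{2}{\beta}+\tfrac{2}{\theta\alpha}=1$ on the dual exponents $\alpha=2q'$, $\beta=2p'$, which is precisely the $\theta$-admissible line $\tfrac{\theta}{p}+\tfrac{1}{q}=1$, i.e.\ $[C,B]$ of Figure~\ref{fig:2}; and it is this $[C,B]$ version that the proof of Theorem~\ref{ose} invokes (cover $I$ by intervals of length $N^{-(\theta-1)}$ and sum). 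The ``$[A,B]$'' in the statement is therefore a typo for ``$[C,B]$''. The culprit in your write-up is the final H\"older step at the $\Sp^{2}$ endpoint: the HLS computation already gives the sharp weight space $L_t^{2\theta/(\theta-1)}L_x^{2}$, and upgrading this to $L_t^{4}L_x^{2}$ by H\"older on $I_N$ deliberately discards the gain that moves the endpoint from $A=(0,\tfrac12)$ down to $C=(0,\tfrac1\theta)$. If you simply stop after HLS and interpolate $(\Sp^{\infty},\,L_t^{2}L_x^{\infty})$ directly against $(\Sp^{2},\,L_t^{2\theta/(\theta-1)}L_x^{2})$, the exponent arithmetic already gives the full $\theta$-admissible line: with $\tau=1-\tfrac1q$ one finds $\tfrac{1}{r_\tau}=\tfrac{\tau}{2}=\tfrac{1}{2q'}$ for the Schatten and space exponents, and for the time exponent
\[
\frac{1-\tau}{2}+\frac{\tau(\theta-1)}{2\theta}=\frac{1}{2}\Bigl(1-\frac{\tau}{\theta}\Bigr)=\frac{1}{2p'}
\quad\Longleftrightarrow\quad \frac{\theta}{p}+\frac1q=1.
\]

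As for the method: your route is genuinely different from, and considerably simpler than, the paper's. You interpolate a constant (trivially analytic) operator family between the two endpoints you actually prove. The paper instead introduces the twisted family $T^{z}_{N,\epsilon}$ with kernel $t^{z}K_{N,\epsilon}$, proves an $\Sp^{\infty}$ bound at $\operatorname{Re}(z)=-1$ via a Hilbert-transform argument (with $L^{\infty}_tL^{\infty}_x$ weights on the twisted operator), proves a one-parameter family of $\Sp^{2}$ bounds at $\operatorname{Re}(z)\in(\tfrac1\theta-\tfrac12,\tfrac1\theta]$ via HLS, Stein-interpolates to cover $\alpha\in[2,\tfrac{2(\theta+1)}{\theta}]$, and then fills in $\alpha\in(\tfrac{2(\theta+1)}{\theta},\infty)$ by a separate Riesz--Thorin argument through the dual formulation (Lemma~\ref{PL5}). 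Both routes reach the same segment; the twist by $t^{z}$ buys nothing extra here, because both Schatten endpoints $\alpha=2$ and $\alpha=\infty$ are accessible directly for the untwisted operator, so a single constant-family interpolation suffices. One small cosmetic point: the ``routine adaptation'' you mention for the cutoff $\eta(\xi/N)^{2}$ is vacuous on the torus --- by \eqref{heta} $\eta$ is the sharp indicator of $[-1,1]$, so $K_N(t,x)=\sum_{|n|\leq N}e^{2\pi i(xn+t|n|^{\theta})}$ is literally the sum treated in Proposition~\ref{l8}.
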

\begin{remark}[proof strategy]
    We will not give direct proof of Proposition \ref{PR1}, instead we will prove the dual of the problem.
\begin{itemize}
    \item First, we will dualize Proposition \ref{PR1}, using Lemma \ref{PL1}.
    \item  Then we will prove the dualized inequality for \(\alpha = \infty\).
    \item  To get the inequality between \(\alpha = \frac{2(\theta + 1)}{\theta}\) and \(\alpha = \infty\), we will use Lemma \ref{PL5}.
    \item Then we will prove the dualized inequality for \(\frac{\theta}{2(\theta + 1)} \leq \frac{1}{\alpha} \leq \frac{1}{2}\) using Lemma \ref{s-interpolation} for \(\Sp^2\)-norm and \(\Sp^\infty\)-norm.
\end{itemize}
\end{remark}
	\begin{proof}[Proof of Proposition \ref{oseP}]  For any $x \in \M=\mathbb{T},$ we define $$\mathbf{1}_{I_{N}}(t,x)=\begin{cases}
       1 & if \quad  t \in  I_{N} \\
      0  & if \quad t \notin  I_{N} 
  \end{cases}.$$
Put $\cH= L^2(I, L^2(\mathbb T)).$	By duality principle Lemma \ref{PL1}, to prove  the desired estimate \eqref{PR1} for all $(\frac{1}{q},\frac{1}{p}) \in [A,B],$ it suffices to show that 
		\begin{equation}\label{PR2}
			\|W_{1}\mathbf{1}_{I_{N}}\mathcal{E}_{N} \mathcal{E}_{N}^{*}[\mathbf{1}_{I_{N}}W_{2}]\|_{\Sp^{2q'} } \leq C_{\theta} \|W_{1} \|_{L_{t}^{2p'}L_{x}^{2q'}}
			\|W_{2} \|_{L_{t}^{2p'}L_{x}^{2q'}}.
		\end{equation}
        Put $\alpha =2q'$ and $\beta=2p'.$  We may rewrite \eqref{PR2} as follows:
		\begin{equation}\label{PR3}
			\|W_{1}\mathbf{1}_{I_{N}}\mathcal{E}_{N} \mathcal{E}_{N}^{*}[\mathbf{1}_{I_{N}}W_{2}]\|_{\Sp^\alpha } \leq C_{\theta} \|W_{1} \|_{L_{t}^{\beta}L_{x}^{\alpha}}
			\|W_{2} \|_{L_{t}^{\beta}L_{x}^{\alpha}},
		\end{equation} 
        for all $\alpha,\beta \geq 1$ such that $\frac{2}{\beta}+\frac{2}{\theta} \cdot \frac{1}{\alpha}=1$ and $0 \leq \frac{1}{\alpha} \leq \frac{1}{2},$ i.e. $2\leq \alpha \leq \infty.$
In view of interpolation theorems in order to prove  \eqref{PR3} for $\alpha \in [2, \infty],$ it is sufficient to prove $\alpha \in [2,  \frac{2(\theta +1)}{\theta}].$ In fact, once this is done,  we may invoke Lemma \ref{PL5}  (as \eqref{PR3} and \eqref{PR1} are equivalent) to get the range of $\alpha$ in    $ [ \frac{2(\theta +1)}{\theta}, \infty ].$ On the other end, we invoke Stein interpolation to get the range of $\alpha$ in $ [2,  \frac{2(\theta +1)}{\theta}].$

We shall first prove \eqref{PR3} for $\alpha = \infty.$ In fact,  in this case, we have 
		\begin{align*}
			\|W_{1}\mathbf{1}_{I_{N}}\mathcal{E}_{N} \mathcal{E}_{N}^{*}[\mathbf{1}_{I_{N}}W_{2}]\|_{\Sp^{\infty}} &=\|W_{1}\mathbf{1}_{I_{N}}\mathcal{E}_{N} \mathcal{E}_{N}^{*}[\mathbf{1}_{I_{N}}W_{2}]\|_{ \cH \to \cH } \\ &=\|W_{1}\mathbf{1}_{I_{N}}\mathcal{E}_{N} (\overline{[\mathbf{1}_{I_{N}}W_{2}]}\mathcal{E}_{N})^{*}\|_{\cH\to \cH} \\
			&\leq \|W_{1}\mathbf{1}_{I_{N}}\mathcal{E}_{N}  \|_{\cH \to \cH} \|\overline{[\mathbf{1}_{I_{N}}W_{2}]}\mathcal{E}_{N}\|_{\cH\to \cH}.
		\end{align*}
Now, let  $\|f \|_{L^{2}(\hM)} \leq 1.$ In view \eqref{d1}, we have 
		\begin{align*}
			\|W_{1}\mathbf{1}_{I_{N}}\mathcal{E}_{N} f\|_{\cH } ^{2}&=\int_{I} \int_{\M} |W_{1}(t,x) \mathbf{1}_{I_{N}}(t,x) \mathcal{E}_{N}f(t,x) |^{2} dx dt \\
			&\leq \int_{I_N} \|W_{1}(t)\|_{L_{x}^{\infty}(\M)}^{2} \int_{\M} |\mathcal{E}_{N}f(t,x) |^{2} dx dt \\
			&\leq \int_{I} \|W_{1}(t)\|_{L_{x}^{\infty}(\M)}^{2} \|\mathcal{E}_{N}f(t) \|_{L_{x}^{2}(\M)}^{2}  dt\\
            & \leq  \|W_{1} \|_{L^{2}(I,L^{\infty}(\M))}
		\end{align*}
	Similarly, $$\|\overline{[\mathbf{1}_{I_{N}} W_{2}]}\mathbf{1}_{I_{N}}\mathcal{E}_{N} \|_{\cH\to \cH} \leq \|W_{2} \|_{L^{2}(I,L^{\infty}(\M))}$$
    Combining the above inequalities, we have 
		$$\|W_{1}\mathbf{1}_{I_{N}}\mathcal{E}_{N} \mathcal{E}_{N}^{*}[\mathbf{1}_{I_{N}}W_{2}]\|_{\Sp^{\infty}} \leq \|W_{1} \|_{L^{2}(I,L^{\infty}(\M))} \|W_{2} \|_{L^{2}(I,L^{\infty}(\M))}.$$
	This proves \eqref{PR3} for  $\alpha=\infty.$ Now we shall prove  \eqref{PR3} for $\alpha=2.$  To this end, for $\epsilon > 0,$ we define a map $T_{N,\epsilon}:\cH  \to \cH $ by
		$$F \mapsto T_{N,\epsilon}F(t,x)=\int_{I \times \M} K_{N,\epsilon} (t - t', x - x') F( t', x') \, dx' dt',$$
		where  
        \begin{align*}
        K_{N,\epsilon}(t,x)& =\mathbf{1}_{\epsilon < |t| <N^{-(\theta-1)}} (t,x) K_{N}(t,x)\\
        &=\mathbf{1}_{\epsilon < |t| <N^{-(\theta-1)}} (t,x)\sum_{\xi=-N}^{N} e^{2 \pi i (x \cdot \xi+ t|\xi|^{\theta})}.    
        \end{align*}
Further, for $z \in \mathbb{C},$ with $\operatorname{Re}(z) \in [-1,\frac{1}{\theta}],$ we define
\[T_{N,\epsilon}^{z}=K_{N,\epsilon}^{z}*\]
where 
		$K_{N,\epsilon}^{z}(t,x)=t^{z}K_{N,\epsilon}(t,x).$
    Since $\Sp^2-$norm is the Hilbert-Schmidt norm, we  first find the kernel of the operator $ W_1 \mathbf{1}_{I_{N}} T^z_{N,\epsilon} (\mathbf{1}_{I_{N}} W_2)$. In fact, for   $f \in \cH $, we have 
 \[
\begin{aligned}
& W_1 \mathbf{1}_{I_{N}} T^z_{N,\epsilon} (\mathbf{1}_{I_{N}} W_2) f(t,x) \\
&= W_1(t,x)\mathbf{1}_{I_{N}}(t,x) K^z_{N,\epsilon} * (\mathbf{1}_{I_{N}} W_2 f)(t,x) \\
&= W_1(t,x) \mathbf{1}_{I_{N}}(t,x) \int_{I_N \times \M} K^z_{N,\epsilon} (t-t', x-x') W_2(t', x') f(t',x') \, dx' \, dt' \\
&= \int_{ I_N \times \M} W_1(t,x) \mathbf{1}_{I_{N}}(t,x)  K^z_{N,\epsilon} (t-t', x-x') W_2(t', x') f(t', x') \, dx' \, dt' \\
&= \int_{I_N \times \M} K_1(t,x, t',x') f(t', x') \, dx' \, dt',
\end{aligned}
\]
where kernel  $ K_1(t,x,t',x') = W_1(t,x) \mathbf{1}_{I_{N}}(t,x) K^z_{N,\epsilon} (t-t',x-x') W_2(x',t').$ By Proposition \ref{l8}, we have 
  \begin{flalign}\label{ds1}
			&\|W_{1}\mathbf{1}_{I_{N}}T_{N,\epsilon}^{z}[\mathbf{1}_{I_{N}}W_{2}]\|_{\Sp^{2}}^{2}  =\left\|K_1\right\|_{\cH \times \cH}^{2} \nonumber \\
            &=\int_{ I_{N} \times \M} \int_{ I_{N} \times \M} |W_{1}(t,x)K_{N,\epsilon}^{z}(t-t',x-x')W_{2}(t', x')|^{2} dx dt dx' dt' \nonumber \\
            &  \lesssim_{\theta} \int_{I_{N} \times \M} \int_{ I_{N} \times \M} |W_{1}(t,x)|^{2}|t-t'|^{2\operatorname{Re}(z)-\frac{2}{\theta}}|W_{2}(t', x')|^{2} dx dt dx' dt' \nonumber \\
            & = \int_{I} \int_{I} \frac{\|W_{1}(t)\|_{L_{x}^{2}(\M)}^{2}\|W_{2}(t')\|_{L_{x}^{2}(\M)}^{2}}{|t-t'|^{\frac{2}{\theta}-2\operatorname{Re}(z)}} dt dt'.
            \end{flalign} 
In order to invoke 1D Hardy–Littlewood–Sobolev inequality that  we imposes  the following the condition
 \[2\operatorname{Re}(z)-\frac{2}{\theta} \in (-1,0]\quad \text{and} \quad \frac{2}{\widetilde{u}}+\bigg(\frac{2}{\theta}-2\operatorname{Re}(z)\bigg)=2.\] 
 Taking  $2\widetilde{u}=u,$ we get 
\begin{eqnarray}\label{c1}
    \frac{1}{u}=\frac{1}{2}+\frac{1}{2}(\operatorname{Re}(z)-\frac{1}{\theta}) \in \left(\frac{1}{4},\frac{1}{2}\right] .
\end{eqnarray}
 Now, for the above ranges of $u$ and $\operatorname{Re}(z),$ it follows that 
\begin{flalign}\label{PR4}
\|W_{1}\mathbf{1}_{I_{N}}T_{N,\epsilon}^{z}[\mathbf{1}_{I_{N}}W_{2}]\|_{\Sp^{2}}^{2} & \lesssim_{\theta} \left\| \|W_{1}\|_{L_{x}^{2}(\M)}^{2} \right\|_{L_{t}^{\widetilde{u}}(I)} \left\| \|W_{2}\|_{L_{x}^{2}(\M)}^{2} \right\|_{L_{t}^{\widetilde{u}}(I)} \nonumber\\
            & = C'_{\theta} \|W_{1} \|^2_{L_{t}^{u}(I,L_{x}^{2}(\M))} \|W_{2} \|^2_{L_{t}^{u}(I,L_{x}^{2}(\M))}.
            \end{flalign} 
Next we claim that for $\operatorname{Re}(z)=-1, \quad T_{N,\epsilon}^{z}: \cH \to \cH,$ holds with some constant depending only on $\theta \text{ and } \text{Im}(z)$ exponentially. In fact, for each $t \in I,$ by Plancherel's theorem, we have 
		\begin{flalign*}
			&\|T_{N,\epsilon}^{z}F(\cdot,t)\|_{L_{x}^{2}(\M)}^{2}=\sum_{m \in \mathbb{Z}}|\widehat{T_{N,\epsilon}^{z}F}(t, m)|^{2} \\
			&=\sum_{m \in \mathbb{Z}}|\widehat{K_{N,\epsilon}^{z}*F}(t, m)|^{2} \\
			&=\sum_{m=-N}^{N}\left|\int_{\epsilon< |t'|<N^{-(\theta-1)}} t'^{-1+i \text{Im}(z)} e^{-2 \pi i (t-t')|m|^{\theta}}\widehat{F(t-t', \cdot)}(m) dt' \right|^{2}\\
			&=\sum_{m=-N}^{N}\left|\int_{\epsilon< |t'|<N^{-(\theta-1)}} t'^{-1+i \text{Im}(z)} G_{m}(t-t') dt' \right|^{2} \\
			&=\sum_{m=-N}^{N}|H_{N,\epsilon}^{z}G_{m}(t)|^{2},
		\end{flalign*}
		where $G_{m}(t)=e^{-2 \pi i t|m|^{\theta}}\widehat{F(t, \cdot)}(m)$ and define 
    \[H_{N,\epsilon}^{z}G_{m}(t)=\int_{\epsilon< |t'|<N^{-(\theta-1)}} t'^{-1+i \text{Im}(z)} G_{m}(t-t') dt'.\]
In fact, we may  rewrite $H_{N,\epsilon}^{z}$ as a Hilbert transform of $h$  up to $i\operatorname{Im}(z):$		
        \begin{equation*}\label{Hilbert transform}
			 H_{N,\epsilon}^{z}G_{m}(t)=\int_{\mathbb{R}}\frac{h(t-t')}{t'^{1-i \text{Im}(z)}} dt',
		\end{equation*}
		where
		\begin{equation*}
			h\left( t\right) =\begin{cases}  G_{m}(t) & if \quad\epsilon  <|t|  <N^{-\left( \theta-1\right) },\\
				0 & otherwise
                \end{cases}.
		\end{equation*}
So it follows (see e.g. \cite{vega1992restriction}) that  $H_{N,\epsilon}^{z}G_{m}$ is a bounded operator on  $L^{2}(\mathbb{R})$  with the operator norm depends only on $\text{Im}(z)$ exponentially. As a consequence, we get 
\begin{equation*}
		 \|T_{N,\epsilon}^{z}F\|_{\cH}^{2}=\sum_{m=-N}^{N} \|H_{N,\epsilon}^{z}G_{m} \|_{L_{t}^{2}(I_{N})}^2.
		\end{equation*}
    Hence, $T_{N,\epsilon}^{z}:\cH \to \cH$ is bounded operator with operator norm depends on $\text{Im}(z)$ exponentially: 
    \begin{eqnarray}\label{eb}
        \|T_{N,\epsilon}^{z}\|_{\cH \to \cH} \leq C (\text{Im}(z)).
    \end{eqnarray}
 		Let $f \in \cH,$ and $\|f\|_{\cH} \leq 1,$ then
		\begin{align*}
			\|W_{1}\mathbf{1}_{I_{N}}T_{N,\epsilon}^{z}[\mathbf{1}_{I_{N}}W_{2}]f\|_{\cH} ^{2} &=\int_{I_{N}} \int_{\mathbb{T}} |W_{1}(t,x) T_{N,\epsilon}^{z}([\mathbf{1}_{I_{N}}W_{2}]f)(t,x)|^{2} dt dx \\
			&\leq \|W_{1}\|_{L_{t}^{\infty}L_{x}^{\infty}(I,\M)}\int_{I_{N}} \int_{\mathbb{T}} |T_{N,\epsilon}^{z}([\mathbf{1}_{I_{N}}W_{2}]f)(t,x)|^{2} dt dx\\
			&=\|W_{1}\|_{L_{t}^{\infty}L_{x}^{\infty}(I,\M)}^{2} \|T_{N,\epsilon}^{z}([\mathbf{1}_{I_{N}}W_{2}]f)\|_{L_{t,x}^{2}(I_{N},\M)}^{2}\\
			&\leq \|W_{1}\|_{L_{t}^{\infty}L_{x}^{\infty}(I,\M)}^{2} \|T_{N,\epsilon}^{z}\|_{\cH \to \cH}^{2} \|[\mathbf{1}_{I_{N}}W_{2}]f\|_{L_{t,x}^{2}(I_{N},\M)}^{2} \\
			& \leq C(\operatorname{Im}(z)) \|W_{1}\|_{L_{t}^{\infty}L_{x}^{\infty}(I,\M)}^{2} \|W_{2}\|_{L_{t}^{\infty}L_{x}^{\infty}(I,\M)}^{2} \|f\|_{\cH} \\
			& \leq C(\operatorname{Im}(z))\|W_{1}\|_{L_{t}^{\infty}L_{x}^{\infty}(I,\M)}^{2} \|W_{2}\|_{L_{t}^{\infty}L_{x}^{\infty}(I,\M)}^{2}.
		\end{align*}       
Thus, for $\operatorname{Re}(z)=-1,$ we have  
		\begin{align}\label{PR6}
			\|W_{1}\mathbf{1}_{I_{N}}T_{N,\epsilon}^{z}[\mathbf{1}_{I_{N}}W_{2}]\|_{\Sp^{\infty}} &=\|W_{1}\mathbf{1}_{I_{N}}T_{N,\epsilon}^{z}[\mathbf{1}_{I_{N}}W_{2}]\|_{\cH \to \cH} \nonumber \\
            & \leq C(\operatorname{Im}(z))\|W_{1}\|_{L_{t}^{\infty}L_{x}^{\infty}(I,\M)} \|W_{2}\|_{L_{t}^{\infty}L_{x}^{\infty}(I,\M)}
		\end{align}
We may choose $ \tau=\operatorname{Re}(z)/(1+ \operatorname{Re}(z)) \in [0,1]$ in Lemma \ref{s-interpolation},  and interpolate between \eqref{PR4} and \eqref{PR6},  to obtain
 \begin{flalign*}
\|W_{1}\mathbf{1}_{I_{N}}T_{N,\epsilon}^{0}[\mathbf{1}_{I_{N}}W_{2}]\|_{\Sp^{2(\operatorname{Re}(z)+1)}}  \lesssim_{\theta}  \|W_{1} \|_{L_{t}^{u(\operatorname{Re}(z)+1)}L_{x}^{2(\operatorname{Re}(z)+1)}(I,\M)} \|W_{2} \|_{L_{t}^{u(\operatorname{Re}(z)+1)}L_{x}^{2(\operatorname{Re}(z)+1)}(I,\M)},
            \end{flalign*} 
 where the constant depends on $\theta$ and independent of $\epsilon.$
Recalling $T^{0}_{N, \epsilon}=T_{N, \epsilon}$ and taking $\epsilon \to 0,$ we get
 \begin{flalign*}
\|W_{1}\mathbf{1}_{I_{N}}T_{N}[\mathbf{1}_{I_{N}}W_{2}]\|_{\Sp^{2(\operatorname{Re}(z)+1)}}  \lesssim_{\theta}  \|W_{1} \|_{L_{t}^{u(\operatorname{Re}(z)+1)}L_{x}^{2(\operatorname{Re}(z)+1)}(I,\M)} \|W_{2} \|_{L_{t}^{u(\operatorname{Re}(z)+1)}L_{x}^{2(\operatorname{Re}(z)+1)}(I,\M)}.
            \end{flalign*}
Put  $\beta=u(\operatorname{Re}(z)+1)$ and  $\alpha=2(\operatorname{Re}(z)+1)$. Note that condition \eqref{c1} is compatible with the following condition 
\begin{eqnarray}\label{c2}
\frac{2}{\beta}+\frac{2}{ \theta \alpha}=1 \quad \text{ and } \quad \frac{\theta}{2 (\theta+1)}\leq \frac{1}{\alpha} \leq \frac{1}{2}    
\end{eqnarray}
for all  $\theta>2.$ Thus the desired inequality  \eqref{PR3} holds as long as $\alpha$ and $\beta$ satisfies \eqref{c2}.
\end{proof}
\begin{proof}[\textbf{Proof of Theorem \ref{ose}}]
Let  $I'_{N}$  be an arbitrary interval whose length is $N^{-(\theta-1)}.$
 Denote $c(I'_{N})$ by  the center of the interval $I'_{N}$. We wish to  shift this interval $I'_N$ to the interval   $I_{N}=[\frac{-1}{2N^{\theta-1}},\frac{1}{2N^{\theta-1}}]$ with centre origin. Thus, by the change of variables, we get 
\begin{equation}\label{ts1}
			\left\|\sum_{j} \lambda_{j}|\mathcal{E}_{N}a_{j}|^{2}\right\|_{L^{p}_{t}L^{q}_{x}( I'_{N} \times \M)}=\left\|\sum_{j} \lambda_{j}|\mathcal{E}_{N}b_{j}|^{2}\right\|_{L^{p}_{t}L^{q}_{x}( I_{N} \times \M)},
		\end{equation}
		where $b_{j}(n)=a_{j}(n)e^{-2 \pi ic(I'_{N}) |n|^{\theta} }.$
Note that if $(a_{j})_{j}$ is orthonormal then $(b_{j})_{j}$ is also orthonormal in $\ell^{2}$. In view of the above identity,  Proposition \ref{oseP} is applicable to any time interval whose length is $1/N^{\theta -1}$.
 Since $I$  is an interval of finite length, we may cover it by taking finitely many intervals of length $1/N,$ say $I\subset \bigcup_{i=1}^{{[N^{\theta-1}]+1}}I_{i}.$ 
        Now taking  Remark \ref{usrev}, \eqref{ts1} and Proposition \ref{oseP} into account, we obtain
\begin{eqnarray*}
   \left\|\sum_{j} \lambda_{j}|\mathcal{E}_{N}a_{j}|^{2}\right\|_{L^{p}_{t}L^{q}_{x}(I \times \M)}^{p}  & \lesssim & \sum_{i=1}^{[N^{\theta-1}]+1}\left\|\sum_{j} \lambda_{j}|\mathcal{E}_{N}a_{j}|^{2}\right\|_{L^{p}_{t}L^{q}_{x}( I_{i} \times \M)}^{p}\\
   & \lesssim & N^{{\theta-1}} \|\lambda\|_{\ell^{\alpha'}}^p.
\end{eqnarray*}
 Thus, we proved that  if $\alpha' \leq  2q/q+1$ then \eqref{IN12} holds true. 
 \end{proof}
 \begin{remark}
     For any other time interval, we can partition it into subintervals of unit length and apply the same argument as in the proof of Theorem \ref{ose}. Hence, it suffices to prove the OSE on the interval $[0,1].$
 \end{remark}
 \begin{proof}[\textbf{Proof of Theorem~\ref{Strichartz} for the case $\theta = 2$}]Here $\M=\mathbb{R}^n \times \mathbb{T}^m.$ Let $I=[0,1]$ and $\cH=L^{2}(I,L^{2}(\M).$ We start by establishing the claim over a shorter time interval \(I_{N}\). After obtaining this localized version, the general case can be derived by summing over a suitable partition of the full time interval. By Remark~\ref{usrev}, it suffices to establish the following estimate
		\begin{equation}\label{theta=2eq}
			\left\| \sum_{j} \lambda_{j} |\mathcal{E}_{N}a_{j} |^{2} \right\|_{L_{t}^{p} (I_N, L_{z}^{q}(\mathbb{R}^n \times \mathbb{T}^m))} \lesssim \| \lambda \|_{\ell^{\alpha'}},
		\end{equation}
		where $I_{N}=[\frac{-1}{2N},\frac{1}{2N}],\alpha'=\frac{2 q}{q+1}$ and $\mathcal{E}_{N}$ is defined in \eqref{FEP} by taking $\eta(\xi)=\mathbf{1}_{S_{d,1}}(\xi)$ i.e.
		\[
		\mathcal{E}_{N} a_j(z) =  \int_{\xi_1 \in [-N,N]^{n}} \sum_{\xi_2 \in [-N,N]^m} e^{2 \pi i z \cdot \xi} e^{2\pi i t (\lvert \xi_1 \rvert^2 + \lvert \xi_2 \rvert^2)}  a_j(\xi) \, d\xi_1.
		\]
We now proceed to prove \eqref{stri} in the same manner as Proposition~\ref{oseP}, and therefore we shall only present the main steps of the argument. It is enough to prove the following inequality:
		\begin{equation}\label{daul for theta=2}
			\|W_{1}\mathbf{1}_{I_{N}}\mathcal{E}_{N} \mathcal{E}_{N}^{*}[\mathbf{1}_{I_{N}}W_{2}]\|_{\Sp^\alpha } \leq C_{\theta} \|W_{1} \|_{L_{t}^{\beta}L_{z}^{\alpha}}
			\|W_{2} \|_{L_{t}^{\beta}L_{z}^{\alpha}},
		\end{equation} 
        for all $\alpha,\beta \geq 1$ such that $\frac{2}{\beta}+ \frac{d}{\alpha}=1$ and $0 \leq \frac{1}{\alpha} < \frac{1}{d+1}.$ Since \eqref{daul for theta=2} holds trivially for $\alpha=\infty$ by Plancherel’s theorem. So we prove \eqref{daul for theta=2} on $\frac{1}{d+2} \leq \alpha <\frac{1}{d+1}.$
        For $\epsilon > 0,$ we define a map $T_{N,\epsilon}:\cH  \to \cH $ by
        $$F \mapsto T_{N,\epsilon}F(t,z)=\int_{I \times \M} K_{N,\epsilon} (t - t', z - z') F( t', z') \, dz' dt',$$
		where  
        \begin{align*}
        K_{N,\epsilon}(t,x,y)& =\mathbf{1}_{\epsilon < |t| <N^{-1}} (t,x,y) K_{N}(t,x,y)\\
        &=\mathbf{1}_{\epsilon < |t| <N^{-1}} (t,x,y) \int_{\xi_1 \in [-N,N]^{n}} e^{2 \pi i(x \cdot \xi_{1}+t \lvert \xi_{1} \rvert^{2})} \, d\xi_{1} \sum_{\xi_2 \in [-N,N]^{m}} e^{2 \pi i(y \cdot \xi_{2}+t \lvert \xi_{2} \rvert^{2})}.
        \end{align*}
Now we observe that if $x=(x'_{1}, \cdots, x'_{n}),\xi_{1}=(\xi'_{1}, \cdots, \xi'_{n}) \in \mathbb{R}^n,$ then by Lemma \ref{gle} \eqref{l1}, we have
\begin{align*}
   \abs{\int_{\xi_1 \in [-N,N]^{n}} e^{2 \pi i(x \cdot \xi_{1}+t \lvert \xi_{1} \rvert^{2})} \, d\xi_{1}}&=\abs{\int_{\xi'_1 \in [-N,N]} e^{2 \pi i(x'_{1} \cdot \xi'_{1}+t \lvert \xi'_{1} \rvert^{2})} \, d\xi'_{1}}^n \\
   &\lesssim \abs{t}^{-\frac{n}{2}}.
\end{align*}
Similarly, if $y=(y'_{1}, \cdots, y'_{m}) \in \mathbb{T}^m,\xi_{2}=(\xi'_{n+1}, \cdots, \xi'_{n+m}) \in \mathbb{Z}^m,$ then by Proposition \ref{l8}, we have
\begin{align*}
   \abs{\sum_{\xi_2 \in [-N,N]^{m}} e^{2 \pi i(y \cdot \xi_{2}+t \lvert \xi_{2} \rvert^{2})}}=\abs{\sum_{\xi'_{n+1} \in [-N,N]} e^{2 \pi i(y'_{1} \cdot \xi'_{n+1}+t \lvert \xi'_{n+1} \rvert^{2})}}^m \lesssim \abs{t}^{-\frac{m}{2}},
   \end{align*}
   if $t \in I_{N}.$
   Using these inequalities, for $t \in I_{N}$ and $z \in \M,$ we get
   \begin{equation}\label{kernel for theta=2 waveguide}
       \abs{K_{N}(t,z)} \lesssim \abs{t}^{-\frac{d}{2}}.
   \end{equation}
Further, for $z_{1} \in \mathbb{C},$ with $\operatorname{Re}(z_{1}) \in [-1,\frac{d}{2}],$ we define
\[T_{N,\epsilon}^{z_{1}}=K_{N,\epsilon}^{z_{1}}*,\]
where $K_{N,\epsilon}^{z_{1}}(t,z)=t^{z_{1}}K_{N,\epsilon}(t,z).$
Form \eqref{kernel for theta=2 waveguide}, we have for $(t,z) \in I_{N} \times \M,$
$$\abs{K_{N,\epsilon}^{z_{1}}(t,z)} \lesssim \abs{t}^{\operatorname{Re}(z_{1})-\frac{d}{2}}.$$ 
If we repeat the calculation given in Proposition~\ref{oseP} for $\Sp^{2}$, then we obtain
\begin{equation}\label{c2 theta=2}
    \|W_{1}\mathbf{1}_{I_{N}}T_{N,\epsilon}^{z_{1}}[\mathbf{1}_{I_{N}}W_{2}]\|_{\Sp^{2}} \lesssim \|W_{1} \|_{L_{t}^{u}(I,L_{z}^{2}(\M))} \|W_{2} \|_{L_{t}^{u}(I,L_{z}^{2}(\M))}
\end{equation}
provided 
\begin{equation}\label{condition theta=2}
    \frac{1}{u}=\frac{1}{2}+\frac{1}{2}(\operatorname{Re}(z_{1})-\frac{d}{2}), \operatorname{Re}(z_{1}) \in \left(\frac{d-1}{2},\frac{d}{2} \right].
\end{equation}
In the case $\operatorname{Re}(z_{1})=-1$, repeating the argument of Proposition~\ref{oseP} and using the boundedness of the Hilbert transform, we deduce
\begin{equation}\label{c infty theta=2}
    \|W_{1}\mathbf{1}_{I_{N}}T_{N,\epsilon}^{z_{1}}[\mathbf{1}_{I_{N}}W_{2}]\|_{\Sp^{\infty}} \leq C(\operatorname{Im}(z_{1})) \|W_{1} \|_{L_{t}^{\infty}(I,L_{z}^{\infty}(\M))} \|W_{2} \|_{L_{t}^{\infty}(I,L_{z}^{\infty}(\M))}.
\end{equation}
Applying complex interpolation between \eqref{c2 theta=2} and \eqref{c infty theta=2}, we get \eqref{theta=2eq} holds as long as 
$$\frac{2}{\beta}+ \frac{d}{\alpha}=1, \quad \frac{1}{d+2} \leq \alpha <\frac{1}{d+1}.$$
Applying the same argument as in the proof of Theorem~\ref{ose}, we obtain a covering 
\([0,1] = \bigcup_{i=1}^{N} I_{i}\), where \(\{I_{i}\}_{i=1}^{N}\) is a collection of disjoint intervals of length \(N^{-1}\), and consequently we get

\begin{eqnarray*}
   \left\|\sum_{j} \lambda_{j}|\mathcal{E}_{N}a_{j}|^{2}\right\|_{L^{p}_{t}L^{q}_{z}(I \times \M)}^{p}  & = & \sum_{i=1}^{N}\left\|\sum_{j} \lambda_{j}|\mathcal{E}_{N}a_{j}|^{2}\right\|_{L^{p}_{t}L^{q}_{z}( I_{i} \times \M)}^{p}\\
   & \lesssim & N \|\lambda\|_{\ell^{\alpha'}}^p.
\end{eqnarray*}

\end{proof}

\subsection{Application of decoupling}
\begin{proof}[\textbf{Proof of Theorem \ref{diagonal or nondiagonal for single stri}}] 
To prove \eqref{arbitrary loss for wave guide for q}, let $I$ be a subset of $\mathbb{R}$ with length $\sim 1.$
   Let $f$ be a function with $f = P_{\le N} f$ and $q= \frac{2(d + 2)}{d}.$ 
\begin{itemize}
    \item Case:  $\theta \geq 2:$
\end{itemize}   
In this case  by Lemma \ref{l2 decoupling wave}, we have
\begin{equation}\label{decomposed}
    \Big\| e^{it(-\Delta)^{\frac{\theta}{2}}} f \Big\|_{L_t^q L_z^q (I \times \M)} 
\lesssim_{\varepsilon} N^{\varepsilon} \Big( \sum_{\nu, k} \Big\| e^{it(-\Delta)^{\frac{\theta}{2}}} f_{\Theta_{\nu, k}} w_{I}\Big\|_{L_t^q L_z^q(\mathbb{R} \times \M)}^2 \Big)^{1/2}
\end{equation}
with $f_{\Theta_{\nu, k}}=e^{2 \pi i y \cdot \nu} \mathcal{F}_{x}^{-1}(\widehat{f}(\cdot,\nu)\phi_{k}).$ By applying H\"older’s inequality in time:

\[
\Big\| e^{it(-\Delta)^{\frac{\theta}{2}}} f_{\Theta_{\nu, k}} w_{I} \Big\|_{L_t^q L_z^q(\mathbb{R} \times \M)} 
\lesssim \Big\| e^{it(-\Delta)^{\frac{\theta}{2}}} f_{\Theta_{\nu, k}}\Big\|_{L_t^p L_x^q(\mathbb{R} \times \mathbb{R}^{n})},
\]
with $p \geq \frac{4q}{n(q - 2)},$ $\frac{\theta}{p}+\frac{n}{q}=\frac{n}{2}$
i.e. $p$ is the time exponent for the Strichartz estimate on $\mathbb{R}^n.$
Applying Lemma \ref{fractional stri on Rd}, namely the Strichartz estimate for the fractional Schrödinger operator (see \cite{guo2014improved,Dinh,cho2011,cho2015}), we obtain:

\[
\Big\| e^{it(-\Delta)^{\frac{\theta}{2}}} f_{\Theta_{\nu, k}} \Big\|_{L_t^p L_x^q(\mathbb{R} \times \mathbb{R}^{n})}
\lesssim \Big\| f_{\Theta_{\nu, k}} \Big\|_{L_{x}^2(\mathbb{R}^n)}.
\]
Substituting this into \eqref{decomposed} and applying Plancherel’s theorem, we have:
\begin{align}\label{q=2(d+2)}
    \Big\| e^{it(-\Delta)^{\frac{\theta}{2}}} f \Big\|_{L_t^q L_z^q (I \times \M)} &\lesssim_{\varepsilon} N^{\varepsilon} \Big( \sum_{\nu, k} \Big\| f_{\Theta_{\nu, k}} \Big\|_{L_{x}^2(\mathbb{R}^n)}^2 \Big)^{1/2} \nonumber \\
    &= N^{\varepsilon} \Big( \sum_{\nu, k} \Big\| \widehat{f}(\cdot, \nu) \phi_k \Big\|_{L^{2}_{\xi_{1}}(\mathbb{R}^n)}^{2} \Big)^{1/2}, \nonumber \\
    &\lesssim N^{\varepsilon} \Big( \sum_{\nu} \Big\| \widehat{f}(\cdot, \nu) \Big\|_{L^{2}_{\xi_{1}}(\mathbb{R}^n)}^{2} \Big)^{1/2} \nonumber \\
    &= N^{\varepsilon} \Big\| f \Big\|_{L_{z}^2(\M)}.
\end{align}
Now, when \(q = 2,\) then by Plancharel's theorem, we have
\begin{equation}\label{q=2}
    \Big\| e^{it(-\Delta)^{\frac{\theta}{2}}}  f \Big\|_{L_t^2 L_z^2 (I \times \M)} \lesssim \Big\| f \Big\|_{L_{z}^2(\M)}.
\end{equation}
And when \(q = \infty,\) then by Bernstein’s inequality, we have
\begin{equation}\label{q=infty}
    \Big\| e^{it(-\Delta)^{\frac{\theta}{2}}}  f \Big\|_{L_t^\infty L_z^\infty (I \times \M)} \lesssim N^{d/2} \Big\| f \Big\|_{L_{z}^2(\M)}.
\end{equation}
Interpolating \eqref{q=2(d+2)} and \eqref{q=2}, we get for $2 \leq q \leq \frac{2(d+2)}{d},$
then
\[
\Big\| e^{it(-\Delta)^{\frac{\theta}{2}}}  f \Big\|_{L_t^q L_z^q (I \times \M)} \lesssim_{\varepsilon} N^{\varepsilon} \Big\| f \Big\|_{L^2(\M)}.
\]
Interpolating \eqref{q=2(d+2)} and \eqref{q=infty}, we get for $\frac{2(d+2)}{d} \leq q \leq \infty ,$
then
\[
\Big\| e^{it(-\Delta)^{\frac{\theta}{2}}} f \Big\|_{L_t^q L_z^q (I \times \M)} \lesssim_{\varepsilon} N^{ \frac{d}{2}-\frac{d+2}{q}+ \varepsilon} \Big\| f \Big\|_{L^2(\M)}.
\]
To prove \eqref{arbitrary loss for wave guide for pq} for $\theta \geq 2,$ we first apply Hölder’s inequality in the time variable to \eqref{arbitrary loss for wave guide for q}. Then, by interpolating with the trivial endpoint \((p, q) = (\infty, 2)\), we arrive at the desired result. 
\begin{itemize}
    \item Case: $1<\theta < 2$:
\end{itemize}
Let $f$ be a function where the spatial variable is radial in $\mathbb{R}^n$ with $f = P_{\le N} f.$ When $q = \frac{2(d + 2)}{d}$, then by Lemma \ref{l2 decoupling wave}, we have
\begin{align*}
    \Big\| e^{it(-\Delta)^{\frac{\theta}{2}}} f \Big\|_{L_t^q L_z^q (I \times \M)} &\leq \Big\| e^{it(-\Delta)^{\frac{\theta}{2}}} f \Big\|_{L_t^q L_z^q (N^{2-\theta}I \times \M)} \\
&\lesssim_{\varepsilon} N^{\varepsilon} \Big( \sum_{\nu, k} \Big\| e^{it(-\Delta)^{\frac{\theta}{2}}} f_{\Theta_{\nu, k}} w_{N^{2-\theta}I}\Big\|_{L_t^q L_z^q(\mathbb{R} \times \M)}^2 \Big)^{1/2},
\end{align*}
with $f_{\Theta_{\nu, k}}=e^{2 \pi i y \cdot \nu} \mathcal{F}_{x}^{-1}(\widehat{f}(\cdot,\nu)\phi_{k})$ and $w_{N^{2-\theta}I}$ is adapted in $N^{2-\theta}I.$
By scaling the time variable \( t \) to \( N^{2 - \theta} t \), we obtain:
\begin{align}\label{decomposed 1< theta <2}
    \Big\| e^{it(-\Delta)^{\frac{\theta}{2}}} f \Big\|_{L_t^q L_z^q (I \times \M)} \lesssim_{\varepsilon} N^{\varepsilon} N^{\frac{2-\theta}{q}} \Big( \sum_{\nu, k} \Big\| e^{iN^{2-\theta}t(-\Delta)^{\frac{\theta}{2}}} f_{\Theta_{\nu, k}} w_{I}\Big\|_{L_t^q L_z^q(\mathbb{R} \times \M)}^2 \Big)^{1/2},
\end{align}
$w_{I}$ is adapted in $I.$
By applying H\"older’s inequality in time:

\[
\Big\| e^{iN^{2-\theta}t(-\Delta)^{\frac{\theta}{2}}} f_{\Theta_{\nu, k}} w_{I} \Big\|_{L_t^q L_z^q(\mathbb{R} \times \M)} 
\lesssim \Big\| e^{iN^{2-\theta}t(-\Delta)^{\frac{\theta}{2}}} f_{\Theta_{\nu, k}}\Big\|_{L_t^{\infty} L_x^q(\mathbb{R} \times \mathbb{R}^{n})},
\]
Since $q>2,$ applying Strichartz estimate for the fractional Schrödinger operator (see \cite[Theorem 1.5(b)]{guo2014improved}), we obtain:

\[
\Big\| e^{iN^{2-\theta}t(-\Delta)^{\frac{\theta}{2}}} f_{\Theta_{\nu, k}} \Big\|_{L_t^{\infty} L_x^q(\mathbb{R} \times \mathbb{R}^{n})}
\lesssim \Big\| f_{\Theta_{\nu, k}} \Big\|_{L_{x}^2(\mathbb{R}^n)}.
\]
Substituting this into \eqref{decomposed 1< theta <2} and applying Plancherel’s theorem, we have:
\begin{align}\label{q=2(d+2) 1< theta <2}
    \Big\| e^{it(-\Delta)^{\frac{\theta}{2}}} f \Big\|_{L_t^q L_z^q (I \times \M)} \lesssim_{\varepsilon} N^{\varepsilon}  N^{\frac{2-\theta}{q}} \Big\| f \Big\|_{L_{z}^2(\M)}.
\end{align}
The remaining part of the proof can be completed in the same way as in the case $\theta \geq 2.$
\end{proof} 
\subsection{Proof of Theorem \ref{Strichartz} for the case $\theta \neq 2$ and Corollary \ref{arbitrary loss ons}}
\begin{remark}[proof strategy]
    \begin{itemize}
    \item First, we will prove \eqref{extension stri} for the case \( (p, q) = (\infty, 1) \) by utilizing the Minkowski inequality and the Plancherel theorem.
    
    \item Next, we will apply the Littlewood-Paley decomposition for Euclidean space $\mathbb{R}^n$ and torus $\mathbb{T}^m.$ When \(\theta > 1\), we will partition the time interval into disjoint segments. However, when \(0 < \theta < 1\), we will not partition the time interval due to Lemma \ref{dispersive on R}.
    
    \item Then we will dualize the problem, using Lemma \ref{PL1}. To get the inequality between \(\alpha = d+2\) and \(\alpha = \infty\), we will use Lemma \ref{PL5}.
    
    \item Then we will prove the dualized inequality for \(\frac{1}{d+2} \leq \frac{1}{\alpha} < \frac{1}{d+1}\) using Lemma \ref{s-interpolation} for \(\Sp^2\)-norm and \(\Sp^\infty\)-norm.
   \item As previously discussed in Remark \ref{cpf}, the phase function \(\phi(\xi) = |\xi_1|^\theta + |\xi_2|^\theta\) exhibits behavior that imposes various restrictions on the parameter \(\theta\). Since the kernel in this case can be expressed as a product of kernels on the Euclidean and toroidal components separately, we can apply the Littlewood–Paley decomposition independently to each component. This approach enables us to utilize Lemma~\ref{dispersive on R}. 
\end{itemize}
\end{remark}
    \begin{proof}[\textbf{Proof of Theorem \ref{Strichartz} for the case $\theta \neq 2$}]
        It is enough to prove for the time intverval $[0,1].$ And by Remark \ref{usrev}, it suffices to establish the following estimate        
\begin{equation}\label{extension stri}
		\left\| \sum_j \lambda_j |\mathcal{E}_{N}a_j|^2 \right\|_{L_t^p([0,1],L_z^q(\mathbb{R}^n \times \mathbb{T}^m))} \lesssim N^{\sigma} \| \lambda \|_{\ell^{\alpha'}},
	\end{equation}
   holds for all orthonormal system $(a_{j})_{j}$ in $\ell_{\xi_{2}}^{2}L^{2}_{\xi_{1}}$ if $\alpha' \leq \frac{2q}{q+1}$ and $\mathcal{E}_{N}$ is defined in \eqref{FEP} by taking $\eta(\xi)=\eta^{d}(\xi).$ 
        We denote $I=[0,1]$ throughout the proof.
        At \((q, p) = (1, \infty)\), we have \(\alpha' = 1\). By  Minkowski inequality  and \eqref{d1}, we have 
\[
\left\| \sum_j \lambda_j |\mathcal{E}_N a_j|^2 \right\|_{L_t^\infty L_z^{1}}
\leq \|\lambda\|_{\ell^1} \sup_j \left\| \mathcal{E}_N a_j \right\|_{L_t^\infty L_z^{2}}^{2} 
\lesssim \|\lambda\|_{\ell^1}.
\]
Let \(\psi\) be a \(C^\infty\) radial function on \(\mathbb{R}^n\) with
\[
\text{supp } \psi \subset \{\xi_{1} \in \mathbb{R}^n: |\xi_{1}| \leq 2\}, \quad \psi(\xi_{1}) = 1 \quad \text{if} \quad |\xi_{1}| \leq 1.
\]
Let \(k_{1} \in \mathbb{N}\) and
\[
\psi_{k_{1}}(\xi_{1}) = \psi(2^{-k_{1}}\xi_{1}) - \psi(2^{-k_{1}+1}\xi_{1}), \quad \xi_{1} \in \mathbb{R}^n.
\]
Then we have
\[
\text{supp } \psi_{k_{1}} \subset \{\xi_{1} \in \mathbb{R}^n: 2^{k_{1}-1} \leq |\xi_{1}| \leq 2^{k_{1}+1}\},
\]
and, with \(\psi_0 = \psi\),
\[
\sum_{k_{1}=0}^\infty \psi_{k_{1}}(\xi_{1}) = 1 \quad \text{if} \quad \xi_{1} \in \mathbb{R}^n.
\]
Then family $ \{\psi_{k_{1}}\}_{k_{1}\geq 0} \subset   C_0^\infty(\mathbb{R}^n)$  forms a Littlewood-Paley decomposition  for the Euclidean space $\mathbb{R}^n$ and each   \( \psi_{k_{1}}\) is supported in \(\{ |s| \approx 2^{k_{1}} \}.\)
We further restrict  \eqref{FEP} by plugging  $\psi_{k_{2}}$ into it, specifically, we introduce  modified Fourier extension  operator 
\begin{eqnarray}\label{extension operator k1}
    \mathcal{E}_{N,k_{1}} a_j(z) = \sum_{\xi_2} \int_{\xi_1} e^{2 \pi i z \cdot \xi} e^{2\pi i t (\lvert \xi_1 \rvert^\theta + \lvert \xi_2 \rvert^\theta)} \eta^{d}\left( \frac{\xi}{N} \right) \psi_{k_{1}}(\xi_1) a_j(\xi) \, d\xi_1, \quad z\in \mathbb R^n \times \mathbb T^m.
\end{eqnarray}
The modified  restriction operator $\mathcal{E}_{N,k_{1}}^*$  can be defined similarly (see \eqref{FRO}). 
Exploiting similar calculations as in  \eqref{P1}, we obtain
\[\mathcal{E}_{N,k_{1}} \circ \mathcal{E}_{N,k_{1}}^* =K_{N,k_{1}} \ast ,\] where
\begin{align*}
     K_{N,k_{1}}(t,x,y)& = \int_{\xi_{1}} e^{2 \pi i(x \cdot \xi_{1}+t \lvert \xi_{1} \rvert^{\theta})} \left(\eta_{1}^{n}(\frac{\xi_{1}}{N})\right)^{2} \left(\psi_{k_{1}}(\xi_{1})\right)^{2} d\xi_{1} \cdot \sum_{\xi_{2}} e^{2 \pi i(y \cdot \xi_{2}+t \lvert \xi_{2} \rvert^{\theta})} \left(\eta_{1}^{m}(\frac{\xi_{2}}{N})\right)^{2} .\\
\end{align*}
To controll the kernel $K_{N,k_{1}}$ for \(\theta > 1\), we may split the interval \(I\) into \(\approx 2^{(\theta - 1) k_{1}}\) short intervals \(\{I_{k_{1},n'}\}_{n'}\) of length \(2^{(1 - \theta) k_{1}}\), i.e. 
\[I \subset \cup_{n'=1}^{2^{(\theta - 1) k_{1}}}I_{k_{1},n'}, \quad  |I_{k_{1}, n'}|\approx 2^{(1 - \theta) k_{1}}.\]
By Minkowski inequality, we have  
\begin{align}\label{strichartz proof for I}
  \left\| \sum_j \lambda_j |\mathcal{E}_N a_j|^2 \right\|_{L_t^p (I, L_z^q(\M))}&\leq \left(\sum_{k_{1} \geq 0} \left\| \sum_j \lambda_j |\mathcal{E}_{N,k_{1}} a_j|^2 \right\|_{L_t^p(I, L_z^q(\M))}^{\frac{1}{2}}\right)^{2} \nonumber \\
     &\leq \left(\sum_{2^{k_{1}} \lesssim N} \left( \sum_{n'=1}^{2^{(\theta-1)k_{1}}} \left\| \sum_j \lambda_j |\mathcal{E}_{N,k_{1}} a_j|^2 \right\|_{L_t^p(I_{k_{1},n'}, L_z^q(\M))}^{p}\right)^{\frac{1}{2p}}\right)^{2} \nonumber \\
    &\leq \resizebox{0.58\textwidth}{!}{$   \left(\sum_{2^{k_{1}} \lesssim N} 2^{(\theta-1)\frac{k_{1}}{2p}} \max\limits_{n'} \left\| \sum_j \lambda_j |\mathcal{E}_{N,k_{1}} a_j|^2 \right\|_{L_t^p(I_{k_{1},n'}, L_z^q(\M))}^{\frac{1}{2}}\right)^{2}.
$}
\end{align}
Now we will prove the following inequality:
\begin{equation}\label{ons for small time k1}
   \left\| \sum_j \lambda_j |\mathcal{E}_{N,k_{1}} a_j|^2 \right\|_{L_t^p L_z^q(I_{k_{1},n'}\times \M)} \lesssim \begin{cases}
   N^{\frac{1}{p}}2^{\frac{(2-\theta)k_{1}}{p}} \| \lambda \|_{\ell^{\alpha'}}    & for \quad k_{1} \geq 0,\theta < 2 \\
N^{\frac{1}{p}}2^{\frac{(2-\theta)k_{1}}{p}} \| \lambda \|_{\ell^{\alpha'}}    & for \quad k_{1}>0,\theta>2 \\
 
  N^{(1+\frac{(\theta-2)n}{m+n})\frac{1}{p}} \| \lambda \|_{\ell^{\alpha'}}  & for \quad k_{1}=0,\theta>2
\end{cases}.
\end{equation}
In order to prove \eqref{ons for small time k1}, again we use Littlewood-Paley decomposition. Let  family $ \{\phi_{k_{2}}\}_{k_{2}\geq 0} \subset   C_0^\infty(\mathbb{R}^{m})$  forms a Littlewood-Paley decomposition  for the torus,  i.e. 
 \(\sum_{k \geq 0} \phi_k(s) = 1\)  and each   \( \phi_k\) is supported in \(\{ |s| \approx 2^k \}.\) We further restrict  \eqref{extension operator k1} by plugging  $\phi_{k_{2}}$ into it, so modified Fourier extension  operator 
\[
\mathcal{E}_{N,k_{1},k_{2}} a_j(z) = \sum_{\xi_2} \int_{\xi_1} e^{2 \pi i z \cdot \xi} e^{2\pi i t (\lvert \xi_1 \rvert^\theta + \lvert \xi_2 \rvert^\theta)} \eta^{d}\left( \frac{\xi}{N} \right) \phi_{k_{2}}(\xi_2) \psi_{k_{1}}(\xi_{1}) a_j(\xi) \, d\xi_1, \quad z\in \mathbb R^n \times \mathbb T^m.
\]
The modified  restriction operator $\mathcal{E}_{N,k_{1},k_{2}}^*$  can be defined similarly (see \eqref{FRO}). 
Exploiting similar calculations as in  \eqref{P1}, we obtain
\[\mathcal{E}_{N,k_{1},k_{2}} \circ \mathcal{E}_{N,k_{1},k_{2}}^* =K_{N,k_{1},k_{2}} \ast ,\] where
\begin{align*}
     K_{N,k_{1},k_{2}}(t,x,y)& = \resizebox{0.79\textwidth}{!}{$ \int_{\xi_{1}} e^{2 \pi i(x \cdot \xi_{1}+t \lvert \xi_{1} \rvert^{\theta})} \left(\eta_{1}^{n}(\frac{\xi_{1}}{N})\right)^{2} \left(\psi_{k_{1}}(\xi_{1})\right)^{2} d\xi_{1} \cdot \sum_{\xi_{2}} e^{2 \pi i(y \cdot \xi_{2}+t \lvert \xi_{2} \rvert^{\theta})} \left(\eta_{1}^{m}(\frac{\xi_{2}}{N})\right)^{2} \left(\phi_{k_{2}}(\xi_{2})\right)^{2}    $}\\
     &:=J_{k_{1}}(t,x)\cdot J_{k_{2}}(t,y).
\end{align*}
For all $k_{1}\geq 1,k_{2}>0,$ taking  \(\widehat{f}(\xi_1) = \eta_1^n \left( \frac{\xi_1}{N} \right)\) and \(\varphi(|\xi_1|) = \eta_1^n \left( \frac{\xi_1}{hN} \right) \left( \psi_{k_{1}}\left( \frac{\xi_1}{h} \right) \right)^2\)  with
\(h = 2^{-(k_{1}+k_{2})}\) in Lemma \ref{dispersive on R},  we obtain 
\begin{align*}
    \|J_{k_{1}}\|_{L^{\infty}(\mathbb R^n)}  & \lesssim 2^{n (k_{1}+k_{2})} (1 + |t| 2^{(k_{1}+k_{2})\theta})^{-n/2} \, \|f\|_{L^1(\mathbb{R}^n)} \lesssim  2^{(2 - \theta)\frac{n (k_{1}+k_{2})}{2}} \, |t|^{-n/2},
\end{align*}
 for every \(t \in \left[ -2^{(1 - \theta){(k_{1}}+k_{2})} t_1, 2^{(1 - \theta){(k_{1}}+k_{2})} t_1 \right]\)  with some $t_1>0.$
  To estimate $J_{k_{2}},$  choose $f$ on $\mathbb T^m$ such that \(\widehat{f}(\xi_2) = \eta_1^m\left( \frac{\xi_2}{N} \right)\). By Poisson summation formula, we get 
   \[\|f\|_{L^1(\mathbb{T}^m)} \leq \left\| \left( \eta_1^m \left( \frac{\cdot}{N} \right) \right)^\vee \right\|_{L^1(\mathbb{R}^m)} 
 = c_2<\infty, \]
 where $c_2$ is some constant   independent of $N$.
Taking  this $f$, and \(\varphi(|\xi_2|) = \eta_1^m\left( \frac{\xi_2}{hN} \right)\left( \phi_k\left( \frac{\xi_2}{h} \right) \right)^2\)
with
\(h = 2^{-(k_{1}+k_{2}})\) in Lemma \ref{dispersive on R},  we obtain 
\begin{align*}
    \|J_{k_{2}}\|_{L^{\infty}(\mathbb T^m)}  & \lesssim 2^{m (k_{1}+k_{2})} (1 + |t| 2^{(k_{1}+k_{2})\theta})^{-m/2} \, \|f\|_{L^1(\mathbb T^m)} \lesssim  2^{(2 - \theta)\frac{m (k_{1}+k_{2})}{2}} \, |t|^{-m/2}
\end{align*}
for every \(t \in \left[ -2^{(1 - \theta){(k_{1}+k_{2}})} t_2, 2^{(1 - \theta){(k_{1}+k_{2}})} t_2 \right]\)  with some $t_2>0.$ \\
Combining the above  inequalities, for $\abs{t} \lesssim 2^{(1-\theta)(k_{1}+k_{2})}$ and $(x,y) \in \mathbb{R}^n \times \mathbb{T}^m(= \M), d=\dim(\M)=m+n$, we  have 
\begin{eqnarray}\label{kernel for waveguide}
    \abs{K_{N,k_{1},k_{2}}(t,x,y)} \lesssim 2^{(2-\theta)\frac{d(k_{1}+k_{2})}{2}} |t|^{-d/2}, \quad \text{ for all } k_{1}>0,k_{2} \geq 0.
\end{eqnarray}
For $k_{1}=0,$ since $\psi_{0}=\psi,$ which is nice compactly supported function in $\text{supp } \psi \subset \{\xi_{1} \in \mathbb{R}^n: |\xi_{1}| \leq 2\}$, so we cannot apply Lemma \ref{dispersive on R} on $J_0,$ but we have
$$\|J_{0}\|_{L^{\infty}(\mathbb R^n)}  \lesssim 1,$$
then 
\begin{eqnarray*}
    \abs{K_{N,0,k_{2}}(t,x,y)} &\lesssim 2^{(2 - \theta)\frac{m k_{2}}{2}} \, |t|^{-m/2}. 
\end{eqnarray*}
 Since $\abs{t} \leq 1,$ if $\theta < 2$ then we take 
\begin{eqnarray}\label{kernel for waveguide 2}
    \abs{K_{N,0,k_{2}}(t,x,y)} \lesssim 2^{(2-\theta)\frac{d k_{2}}{2}} |t|^{-d/2},
\end{eqnarray}
for other $\theta,$ we take 
\begin{eqnarray}\label{kernel for waveguide 3}
    \abs{K_{N,0,k_{2}}(t,x,y)} &\lesssim 2^{(2 - \theta)\frac{m k_{2}}{2}} \, |t|^{-d/2}. 
\end{eqnarray}
In order to take the advantage of dispersive estimate \eqref{kernel for waveguide} for each $k_{2},$ we need to make the time interval $I_{k_{1},n'}$ sufficiently small.  To this end,  for \(\theta > 1\), we may split the interval \(I_{k_{1},n'}\) into \(\approx 2^{(\theta - 1) k_{2}}\) short intervals \(\{I_{k_{1},k_{2},n',n''}\}_{n''}\) of length \(2^{(1 - \theta)(k_{1}+ k_{2})}\), i.e. 
\[I_{k_{1},n'} \subset \cup_{n''=1}^{2^{(\theta - 1) k_{2}}}I_{k_{1},k_{2},n',n''}, \quad  |I_{k_{1},k_{2},n',n''}|\approx 2^{(1 - \theta) (k_{1}+k_{2})}.\]
Let us denote $L^p_{t}L_{z}^{q}=L_t^{p}{(I_{k_{1},k_{2},n',n''},L^{q}_{z}}(\M)),$ and by Minkowski inequality, we have  
\begin{align}\label{strichartz proof}
    \left\| \sum_j \lambda_j |\mathcal{E}_{N,k_{1}} a_j|^2 \right\|_{L_t^p (I_{k_{1},n'}, L_z^q(\M))}&\leq \left(\sum_{k_{2} \geq 0} \left\| \sum_j \lambda_j |\mathcal{E}_{N,k_{1},k_{2}} a_j|^2 \right\|_{L_t^p(I_{k_{1},n'}, L_z^q(\M))}^{\frac{1}{2}}\right)^{2} \nonumber \\
     &\leq \left(\sum_{2^{k_{2}} \lesssim N} \left( \sum_{n''=1}^{2^{(\theta-1)k_{2}}} \left\| \sum_j \lambda_j |\mathcal{E}_{N,k_{1},k_{2}} a_j|^2 \right\|_{L_t^p L_z^q}^{p}\right)^{\frac{1}{2p}}\right)^{2} \nonumber \\
    &\leq \left(\sum_{2^{k_{2}} \lesssim N} 2^{(\theta-1)\frac{k_{2}}{2p}} \max\limits_{n''} \left\| \sum_j \lambda_j |\mathcal{E}_{N,k_{1},k_{2}} a_j|^2 \right\|_{L_t^pL_z^q}^{\frac{1}{2}}\right)^{2}.
\end{align}
Now we will prove the following inequality:
\begin{eqnarray}\label{ons for small time}
\left\| \sum_j \lambda_j |\mathcal{E}_{N,k_{1},k_{2}} a_j|^2 \right\|_{L_t^p L_z^q} \lesssim
\begin{cases}
 2^{\frac{(2-\theta)(k_{1}+k_{2})}{p}} \| \lambda \|_{\ell^{\alpha'}}    & for \quad k_{1},k_{2} \geq 0, \theta < 2 \\
 2^{\frac{(2-\theta)(k_{1}+k_{2})}{p}} \| \lambda \|_{\ell^{\alpha'}} & for \quad k_{1} \geq 1,k_{2} \geq 0,\theta > 2 \\
  2^{\frac{(2-\theta)mk_{2}}{p(m+n)}} \| \lambda \|_{\ell^{\alpha'}}  & for \quad k_{1}=0, \theta >2
\end{cases}
\end{eqnarray}
The key ideas to prove \eqref{ons for small time} are similar to the proof of Proposition \ref{oseP}. Put  $\cH= L^2(I_{k_{1},k_{2},n',n''}, L^2(\M))$,  $\alpha=2q' \geq 1$ and $\beta=2p' \geq 1$.  By duality principle Lemma \ref{PL1}, to prove  the desired estimate \eqref{ons for small time} for all $(\frac{1}{q},\frac{1}{p}) \in (A,B]$ (see Figure \ref{fig:enter-label}), it suffices to show that 
		\begin{equation}\label{dual onse for short time}
        \|W_{1}\mathcal{E}_{N,k_{1},k_{2}} \mathcal{E}_{N,k_{1},k_{2}}^{*}W_{2}\|_{\Sp^\alpha } \lesssim
        \begin{cases}
            2^{\frac{(2-\theta)(k_{1}+k_{2})}{p}} \|W_{1} \|_{L_{t}^{\beta}L_{z}^{\alpha}}
			\|W_{2} \|_{L_{t}^{\beta}L_{z}^{\alpha}} & for \quad k_{1},k_{2} \geq 0, \theta < 2 \\
            2^{\frac{(2-\theta)(k_{1}+k_{2})}{p}} \|W_{1} \|_{L_{t}^{\beta}L_{z}^{\alpha}}
			\|W_{2} \|_{L_{t}^{\beta}L_{z}^{\alpha}} &  for \quad k_{1} \geq 1,k_{2} \geq 0,\theta > 2  \\
            2^{\frac{(2-\theta)mk_{2}}{p(m+n)}} \|W_{1} \|_{L_{t}^{\beta}L_{z}^{\alpha}}
			\|W_{2} \|_{L_{t}^{\beta}L_{z}^{\alpha}} & for \quad \quad k_{1}=0, \theta >2
        \end{cases}.
		\end{equation}
By the hypothesis, we have
\[\frac{2}{\beta}+\frac{d}{\alpha}=1 \quad \text{and} \quad d+1< \alpha \leq \infty.\]
 In order to prove \eqref{dual onse for short time} for the given range, it is sufficient to prove it for \(\frac{1}{d+2} \leq \frac{1}{\alpha} < \frac{1}{d+1}\).  As the  remaining range   will follow from Lemma \ref{PL5} by interpolation between \(\alpha = d+2\) and \(\alpha = \infty.\) Now we shall briefly sketch the proof for the range   \(\frac{1}{d+2} \leq \frac{1}{\alpha} < \frac{1}{d+1}\).

 For any small $\varepsilon >0,$ we define an analytic family of operators $T^{ z'}_{N,k_{1},k_{2},\varepsilon}:\cH \to \cH  $ on the strip $\{z' \in \mathbb{C}: -1 \leq \operatorname{Re}(z')\leq \frac{d}{2}\}$ as follows:
$$T^{ z'}_{N,k_{1},k_{2},\varepsilon}:=K^{ z'}_{N,k_{1},k_{2},\varepsilon} \ast $$
where $K^{z'}_{N,k_{1},k_{2},\varepsilon}(t,z)=\mathbf{1}_{\varepsilon<\lvert t \rvert <2^{(1-\theta)(k_{1}+k_{2})}}  t ^{z'}K_{N,k_{1},k_{2}}(t,z)$. 
 By \eqref{kernel for waveguide},\eqref{kernel for waveguide 2} and \eqref{kernel for waveguide 3}, for $\abs{t-s} \lesssim 2^{(1-\theta)(k_{1}+k_{2})},$ we have 
\begin{equation*}
\lvert K^{\epsilon}_{z',N,k_{1},k_{2}}(t-s,x-x',y-y') \rvert \lesssim
\begin{cases}
    \lvert t-s \rvert^{-\frac{d-2\operatorname{Re}(z')}{2}} 2^{(2-\theta)\frac{d (k_{1}+k_{2})}{2}} & for \quad k_{1},k_{2} \geq 0, \theta < 2,  \\
    \lvert t-s \rvert^{-\frac{d-2\operatorname{Re}(z')}{2}} 2^{(2-\theta)\frac{d(k_{1}+k_{2})}{2}} & for \quad k_{1} \geq 1,k_{2} \geq 0,\theta > 2 \\
     \lvert t-s \rvert^{-\frac{d-2\operatorname{Re}(z')}{2}} \, 2^{(2 - \theta)\frac{m k_{2}}{2}}  & for \quad k_{1}=0, \theta >2
\end{cases}.
\end{equation*}
In view of this  and following a similar steps as in \eqref{ds1} and \eqref{PR4}, we obtain
\begin{equation}\label{C^2}
\|W_1T^{z'}_{N,k_{1},k_{2},\varepsilon} W_2\|_{\Sp^{2}} \lesssim
\begin{cases}
    \|W_1\|_{L_t^{u}L^{2}_{z}}\|W_2\|_{L_t^{u}L^{2}_{z}} 2^{(2-\theta)\frac{d (k_{1}+k_{2})}{2}} & for \quad k_{1},k_{2} \geq 0, \theta < 2,  \\
    \|W_1\|_{L_t^{u}L^{2}_{z}}\|W_2\|_{L_t^{u}L^{2}_{z}} 2^{(2-\theta)\frac{d (k_{1}+k_{2})}{2}} & for \quad k_{1} \geq 1,k_{2} \geq 0,\theta > 2
    \\
    \|W_1\|_{L_t^{u}L^{2}_{z}}\|W_2\|_{L_t^{u}L^{2}_{z}} 2^{(2 - \theta)\frac{m k_{2}}{2}} & for \quad k_{1}=0, \theta >2
\end{cases} 
\end{equation}
where 
\begin{eqnarray}\label{c2wave}
    2\operatorname{Re}(z')-d \in (-1,0]\quad \text{and} \quad \frac{1}{u}=\frac{1}{2}+\frac{1}{2}(d-\operatorname{Re}(z')) \in \left(\frac{1}{4},\frac{1}{2}\right].
\end{eqnarray}
Next, for  $\operatorname{Re}(z')=-1,$  performing similar  calculations as in \eqref{eb} and   \eqref{PR6}, we have   
		\begin{align}\label{C^infty}
			\|W_1T_{N,k_{1},k_{2},\epsilon}^{z'}W_2\|_{\Sp^{\infty}} &=\|W_{1}T_{N,k_{1},k_{2},\epsilon}^{z'}W_{2}\|_{\cH \to \cH} \nonumber \\
			& \leq C(\operatorname{Im}(z'))\|W_{1}\|_{L_{t}^{\infty}L_{z}^{\infty}} \|W_{2}\|_{L_{t}^{\infty}L_{z}^{\infty}}.
		\end{align}
We may choose $ \tau=\operatorname{Re}(z')/(1+ \operatorname{Re}(z')) \in [0,1]$ in Lemma \ref{s-interpolation},  and interpolate between \eqref{C^2} and \eqref{C^infty},  to obtain for $k_{1},k_{2} \geq 0, \theta < 2$ and $\quad k_{1} \geq 1,k_{2} \geq 0,\theta > 2,$
		\begin{flalign*}
			& \|W_{1}T_{N,k_{1},k_{2},\epsilon}^{0}W_{2}\|_{\Sp^{2(\operatorname{Re}(z')+1)}} \\
            &\lesssim 2^{\frac{1}{p}(2-\theta)(k_{1}+k_{2})} \|W_{1} \|_{L_{t}^{u(\operatorname{Re}(z')+1)}L_{z}^{2(\operatorname{Re}(z')+1)}} \|W_{2} \|_{L_{t}^{u(\operatorname{Re}(z')+1)}L_{z}^{2(\operatorname{Re}(z')+1)}},
		\end{flalign*} 
        and for $ k_{1}=0, \theta >2,$ we get 
        \begin{flalign*}
	&
    \|W_{1}T_{N,k_{1},k_{2},\epsilon}^{0}W_{2}\|_{\Sp^{2(\operatorname{Re}(z')+1)}} \\
            &\lesssim 2^{\frac{m}{pd}(2-\theta)k_{2}} \|W_{1} \|_{L_{t}^{u(\operatorname{Re}(z')+1)}L_{z}^{2(\operatorname{Re}(z')+1)}} \|W_{2} \|_{L_{t}^{u(\operatorname{Re}(z')+1)}L_{z}^{2(\operatorname{Re}(z')+1)}},
		\end{flalign*} 
		where the constants are independent of $\epsilon.$
		Recalling $T^{0}_{N,k_{1},k_{2}, \epsilon}=T_{N,k_{1},k_{2}, \epsilon}$ and taking $\epsilon \to 0,$ for $k_{1},k_{2} \geq 0, \theta < 2$ and $ k_{1} \geq 1,k_{2} \geq 0,\theta > 2,$ we get
		\begin{flalign*}
			& \|W_{1}T_{N,k_{1},k_{2}}W_{2}\|_{\Sp^{2(\operatorname{Re}(z')+1)}} \\ & \lesssim 2^{\frac{1}{p}(2-\theta)(k_{1}+k_{2})} \|W_{1} \|_{L_{t}^{u(\operatorname{Re}(z')+1)}L_{z}^{2(\operatorname{Re}(z')+1)}} \|W_{2} \|_{L_{t}^{u(\operatorname{Re}(z')+1)}L_{z}^{2(\operatorname{Re}(z')+1)}},
		\end{flalign*}
         and for $ k_{1}=0, \theta >2,$ we get 
\begin{flalign*}
			& \|W_{1}T_{N,k_{1},k_{2}}W_{2}\|_{\Sp^{2(\operatorname{Re}(z')+1)}} \\ & \lesssim 2^{\frac{1}{p}(2-\theta)\frac{mk_{2}}{m+n}} \|W_{1} \|_{L_{t}^{u(\operatorname{Re}(z')+1)}L_{z}^{2(\operatorname{Re}(z')+1)}} \|W_{2} \|_{L_{t}^{u(\operatorname{Re}(z')+1)}L_{z}^{2(\operatorname{Re}(z')+1)}},
		\end{flalign*}

		Put  $\beta=u(\operatorname{Re}(z')+1)$ and  $\alpha=2(\operatorname{Re}(z')+1)$. Note that condition \eqref{c2wave}  is compatible with the following condition 
		\begin{eqnarray}\label{c3}
			\frac{2}{\beta}+\frac{d}{ \alpha}=1 \quad \text{ and } \quad \frac{1}{d+2} \leq \frac{1}{\alpha} < \frac{1}{d+1},
		\end{eqnarray}
		for all $\theta >1.$ Thus the desired inequality  \eqref{dual onse for short time} holds as long as $\alpha$ and $\beta$ satisfies \eqref{c3}.
        Thus for all $(\frac{1}{q},\frac{1}{p}) \in (A,B]$ inequality \eqref{dual onse for short time} implies \eqref{ons for small time}. 
Using the inequality \eqref{ons for small time} in \eqref{strichartz proof}, we get 
\begin{eqnarray*}
\left\| \sum_j \lambda_j |\mathcal{E}_{N,k_{1}} a_j|^2 \right\|_{L_t^p L_z^q(I_{k_{1},n'}\times \M)} \lesssim
\begin{cases}
 N^{\frac{1}{p}}2^{\frac{(2-\theta)k_{1}}{p}} \| \lambda \|_{\ell^{\alpha'}}    & for \quad k_{1} \geq 0, \theta < 2 \\
 N^{\frac{1}{p}} 2^{\frac{(2-\theta)k_{1}}{p}} \| \lambda \|_{\ell^{\alpha'}} & for \quad k_{1} \geq 1 ,\theta > 2 \\
 N^{\frac{1}{p}(1+\frac{n(\theta-2)}{m+n})}  \| \lambda \|_{\ell^{\alpha'}}  & for \quad k_{1}=0, \theta >2
\end{cases}.
\end{eqnarray*}
Using this inequality in \eqref{strichartz proof for I}, we get
\begin{equation*}
\left\| \sum_j \lambda_j |\mathcal{E}_N a_j|^2 \right\|_{L_t^p (I, L_z^q(\M))}\lesssim \begin{cases}
    N^{\frac{2}{p}} \| \lambda \|_{\ell^{\alpha'}} & for \quad 1 <\theta < 2\\
    \left(N^{(1+\frac{n(\theta-2)}{m+n})\frac{1}{2p}}+N^{\frac{1}{p}} \right)^{2} \| \lambda \|_{\ell^{\alpha'}} &   \theta >2
\end{cases}.
\end{equation*}
When \(1 < \theta \leq 3+\tfrac{m}{n}\), we have 
\[
\left(1+\frac{n(\theta-2)}{m+n}\right)\frac{1}{2p} \leq \frac{1}{p}.
\]
On the other hand, if \(\theta > 3+\tfrac{m}{n}\), then 
\[
\left(1+\frac{n(\theta-2)}{m+n}\right)\frac{1}{2p} > \frac{1}{p}.
\]
Therefore, we obtain the desired inequality \eqref{extension stri}.
\\
When $\theta \in (0,1)$ we can get the inequality by directly adding up the estimate from \eqref{ons for small time} over all $k_{1}$ and $k_{2}$ because there is no need to divide the interval $I.$ \\
    \end{proof}

\begin{proof}[\textbf{Proof of Corollary \ref{arbitrary loss ons}}] Let $I$ be a subset of $\mathbb{R}$ with length $\sim 1.$ When $\theta > 2, 1 \leq q \leq \frac{d+2}{d} $ and $\frac{2}{p}+\frac{d}{q}=d.$ Then 
    by using Theorem \ref{diagonal or nondiagonal for single stri} and Minkowski inequality, we get 
    \begin{equation*}
        \Big\|\sum_{j}\lambda_{j}|e^{it(-\Delta)^{\frac{\theta}{2}}}f_{j}|^{2}\Big\|_{L^{p}_{t}L^{q}_{x}(I \times \M)} \lesssim_{\varepsilon} N^{\varepsilon}\|\lambda\|_{\ell^{1}},
    \end{equation*}
    and by Theorem \ref{Strichartz}, for $$\sigma_{1}=\begin{cases}
(1+\frac{n(\theta-2)}{m+n})\frac{1}{p} & if \quad \theta>3+\frac{m}{n} \\
    \frac{2}{p} & if \quad 2< \theta \leq 3+\frac{m}{n}
\end{cases},$$
we have
    \begin{equation*}
        \left\| \sum_j \lambda_j |e^{it(-\Delta)^\frac{\theta}{2}} P_{\leq N}f_j|^2 \right\|_{L_t^p L_z^q (I \times \M)}\lesssim N^{\sigma_{1}} \| \lambda \|_{\ell^{\alpha'}},
    \end{equation*}
    where $\alpha' \leq \frac{2q}{q+1}.$ 
    By interpolating these two above inequalities, we get the desired result.
\end{proof}
\section{Proof of Theorem  \ref{TIN7}} \ 
\begin{remark}[proof strategy]
\begin{itemize}
    \item First, we eliminate the factor $N^{\sigma}$ by applying the vector-valued version of the Littlewood–Paley theorem together with the triangle inequality, which yields \eqref{WL2}.
    \item Next, using \eqref{WL2}, we establish Proposition \ref{inhomogeneous estimate}, which provides control over the inhomogeneous component of the Duhamel formula for \eqref{HEInP}.
    \item After that, we construct a complete metric space and define a mapping such that any fixed point of this map corresponds to a solution of \eqref{HEInP}. We then show that this mapping is a contraction on the chosen space, making use of \eqref{WL2} and \eqref{WL4}. Finally, we invoke the Banach fixed-point theorem to conclude the proof.
    \end{itemize}
\end{remark}

Throughout this section we shall assume that $p, q, s$ and $\sigma$ be as in  Theorem \ref{TIN7}. We note that the conclusion of Theorems \ref{ose} and \ref{Strichartz} (by the same proof) also holds true by replacing  $P_{\leq N}$ by $P_N$ for any $N \in 2^\mathbb{N}.$  As a consequence, the following estimates 
\begin{eqnarray}\label{WL1}
    \bigg\| \sum_{j}\lambda_j |e^{it(-\Delta)^{\frac{\theta}{2}}}P_{N} f_j|^2 \bigg\|_{L^{p}_{t}L^q_z(I \times \M)} \leq C_{\abs{I},\theta, \sigma} N^{\sigma} \|\lambda\|_{\ell^{\alpha'}}.
\end{eqnarray}
holds true 
for any $N \in 2^\mathbb{N}.$  Next we claim that  for any $\epsilon>0$ any ONS $(f_j)_j$ in $L^2(\M)$,  the following estimate
	\begin{equation}\label{WL2}
    \bigg\| \sum_{j}\lambda_j |e^{it(-\Delta)^{\frac{\theta}{2}}}\langle D \rangle^{-\frac\sigma2 -\varepsilon} f_j|^2 \bigg\|_{L^{p}_{t}L^q_z(I \times \M)} \leq C_{\abs{I},\theta, \sigma, \varepsilon} \|\lambda\|_{\ell^{\alpha'}},
	\end{equation} 
holds true. In fact, by the vector-valued version of the Littlewood-Paley theorem (e.g.  \cite[Lemma 1]{sabin2016littlewood}), triangle inequality, Remark \ref{vvr}, and \eqref{WL1}, we have 
 \begin{flalign*}
     &\bigg\| \sum_{j}\lambda_j |e^{it( - \Delta )^{\frac{\theta}{2}}}\langle D \rangle^{-\frac\sigma2 -\varepsilon} f_j|^2 \bigg\|_{L^{p}_{t}L^q_z(I \times \M)} \\ &\lesssim \bigg\| \sum_{j}\lambda_j |\langle D \rangle^{-\frac\sigma2 -\varepsilon}e^{it( - \Delta )^{\frac{\theta}{2}}} P_1 f_j|^2 \bigg\|_{L^{p}_{t}L^q_z(I \times \M)}+\sum_{N \in 2^{\mathbb{N}} \setminus \{1\}} \bigg\| \sum_{j}\lambda_j |\langle D \rangle^{-\frac\sigma2 -\varepsilon}e^{it( - \Delta )^{\frac{\theta}{2}}} P_{N} f_j|^2 \bigg\|_{L^{p}_{t}L^q_z(I \times \M)} \\ 
     &\lesssim_{\abs{I},\theta, \sigma, \varepsilon} \|\lambda\|_{\ell^{\alpha'}}+\sum_{N \in 2^{\mathbb{N}} \setminus \{1\}} N^{-(\sigma+\varepsilon)} \bigg\| \sum_{j}\lambda_j |e^{it( - \Delta )^{\frac{\theta}{2}}} P_{N} f_j|^2 \bigg\|_{L^{p}_{t}L^q_z(I \times \M)} \lesssim_{\abs{I},\theta, \sigma, \varepsilon} \|\lambda\|_{\ell^{\alpha'}}.
 \end{flalign*}
This proves \eqref{WL2}.

\begin{remark}\label{vvr}
     We recall the vector-valued extension of Bernstein's inequality: for any $\rho \in \mathbb{R},$ $1< q < \infty,$ and any $(g_{j})_{j} \in L^{q}(\ell^2)$ (not necessarily orthonormal), 
 $$\bigg\|\left(\sum_{j}|P_{N}\langle D\rangle^{\rho}g_{j}|^2\right)^{\frac{1}{2}}\bigg\|_{L^{q}(\M)} \lesssim N^{\rho} \bigg\|\left(\sum_{j}|P_{N}g_{j}|^2\right)^{\frac{1}{2}}\bigg\|_{L^{q}(\M)}.$$
For $\M = \mathbb{R}^d$ or $\mathbb{T}^d$, this follows from Hörmander–Mikhlin’s Fourier multiplier theorem, see \cite[Propositions 1 and 4]{zimmermann1989vector} and \cite{nakamura2020orthonormal}. \\

By taking $Tg= P_{N} \langle D \rangle^\rho g$ and $g_{1}=g$ and $g_{j}=0, j>1$ using these values in $L^q$ multiplier on $\mathbb{R}^d,$ so $$\|Tg\|_{L^q(\mathbb{R}^d)} \lesssim N^\rho \|g\|_{L^q(\mathbb{R}^d)}$$ 
Thus $\|T\| \lesssim N^\rho$ on $\mathbb{R}^d.$ From Lemma \ref{Lp multiplier}, we get
$$\|T\| \lesssim N^\rho$$ on $\mathbb{R}^n \times \mathbb{T}^m.$ 
Now applying Proposition \ref{l2 valued extension}, we get the result.

On the other hand, case $\M = \mathbb{R}^n \times \mathbb{T}^m$ follows from Lemma \ref{Lp multiplier} together with Proposition \ref{l2 valued extension}. 
\end{remark}
  
\begin{proposition}\label{inhomogeneous estimate} \ 
		\begin{enumerate}
			\item \label{ss1} If \eqref{WL2} holds, then for all $V \in L_t^{p'} L_z^{q'} (I \times \M) \text{ and } s=\frac{\sigma}{2}+\varepsilon,$
			\begin{equation}\label{WL3}
				\left\| \int_{I} e^{i t ( - \Delta )^{\frac{\theta}{2}}} \langle D \rangle^{-s} V(t,z) \langle D \rangle^{-s} e^{-i t ( - \Delta )^{\frac{\theta}{2}}} dt \right\|_{\Sp^{\alpha} (L_z^{2}(\M))} \leq C_{\abs{I},\theta, \sigma, \varepsilon} \| V \|_{L_t^{p'} L_z^{q'} (I \times \M)}.
			\end{equation}
			\item \label{ss2} (Inhomogeneous estimate) Let \( R(t') : L^{2}(\M) \to L^{2}(\M) \) be self-adjoint for each \( t^{'} \in I \) and define
			\begin{equation*}
				\gamma(t) = \int_{0}^{t} e^{-i (t - t') ( - \Delta )^{\frac{\theta}{2}}} R(t') e^{i (t - t') ( - \Delta )^{\frac{\theta}{2}}} dt' \text{ for all } t \in I.
			\end{equation*}
			If \eqref{WL2} holds,  then we have
			\begin{equation}\label{WL4}
				\left\|\rho_{\langle D \rangle^{-s} \gamma{(t)} \langle D \rangle^{-s}} \right\|_{L_t^{p} L_{z}^{q} (I \times \M)}\leq C_{\abs{I},\theta, \sigma, \varepsilon} \left\| \int_{I} e^{i s ( - \Delta )^{\frac{\theta}{2}}} |R(s)| e^{-i s ( - \Delta )^{\frac{\theta}{2}}} ds \right\|_{\Sp^{\alpha'}(L_{z}^{2}(\M))}.
			\end{equation}
		\end{enumerate}
	\end{proposition} 
	    \begin{proof}  The method of proof follows similar arguments as in  \cite[Section 2.3]{frank2014strichartz} and \cite[Proposition 11]{nealbez2021}. For the convenience  of the reader, we briefly outline it here.\\
        \eqref{ss1}
            Let \( \gamma_{0} = \sum_{j} \lambda_j \lvert f_j \rangle \langle f_j \rvert \), \( (f_j)_{j} \) is the ONS in \( L_z^{2} (\M) \) such that
			$
			\| \gamma_{0} \|_{\Sp^{\alpha'} (L_z^{2}(\M))} = \| \lambda \|_{\ell^{\alpha'}} \leq 1.
			$
          We recall  property of density function $ \rho_{\gamma}$, namely $\int V(z) \rho_{\gamma}(z)dz=\operatorname{Tr}(V\gamma).$
            By the cyclic property of trace and H\"older's inequality, we have
			\begin{flalign*}
				&\left| \operatorname{Tr}_{L^{2}_{z}} \left( \gamma_{0} \int_{I} e^{i t ( - \Delta )^{\frac{\theta}{2}}} \langle D \rangle^{-s} V(t,z) \langle D \rangle^{-s} e^{-i t ( - \Delta )^{\frac{\theta}{2}}} dt \right) \right| \\
				&=\left| \int_{I} \operatorname{Tr}_{L^{2}_{z}} \left( V e^{-i t ( - \Delta )^{\frac{\theta}{2}}} \langle D \rangle^{-s} \gamma_{0} \langle D \rangle^{-s} e^{i t ( - \Delta )^{\frac{\theta}{2}}} \right) dt \right| \\
				&= \left| \int_{I} \int_{\M} V(z) \rho_{e^{-i t ( - \Delta )^{\frac{\theta}{2}}} \langle D \rangle^{-s} \gamma_{0} \langle D \rangle^{-s} e^{i t ( - \Delta )^{\frac{\theta}{2}}}}(z) dz \, dt \right| \\
				&\leq \| V \|_{L_t^{p'} L_z^{q'} (I \times \M)} \left\| \rho_{e^{-i t ( - \Delta )^{\frac{\theta}{2}}} \langle D \rangle^{-s} \gamma_{0} \langle D \rangle^{-s} e^{i t ( - \Delta )^{\frac{\theta}{2}}}}\right\|_{L_t^{p} L_{z}^{q} (I \times \M)} \\
				&= \| V \|_{L_t^{p'} L_z^{q'} (I \times \M)}\left\|\sum_{j} \lambda_j \left| e^{i t ( - \Delta )^{\frac{\theta}{2}}} \langle D \rangle^{-s} f_j \right|^2\right\|_{L_t^{p} L_{z}^{q}(I \times \M)},
			\end{flalign*}
where density function  $\rho_{e^{-i t ( - \Delta )^{\frac{\theta}{2}}} \langle D \rangle^{-s} \gamma_{0} \langle D \rangle^{-s} e^{i t ( - \Delta )^{\frac{\theta}{2}}}}=\sum_{j} \lambda_j \left| e^{i t ( - \Delta )^{\frac{\theta}{2}}} \langle D \rangle^{-s} f_j \right|^2.$ 
Now by invoking the duality argument and \eqref{WL2}, we get  \eqref{WL2}.\\

 \noindent 
 \eqref{ss2} We recall that  $\abs{\operatorname{Tr}(AB)} \leq \operatorname{Tr}(|A||B|)$ for self-adjoint operators $A,B.$ Denote   \[A(t)=e^{i t ( - \Delta )^{\frac{\theta}{2}}} \langle D \rangle^{-s} |V(t,z)| \langle D \rangle^{-s} e^{-i t ( - \Delta )^{\frac{\theta}{2}}}, \quad  B(t')= e^{i t' ( - \Delta )^{\frac{\theta}{2}}} |R(t')| e^{-i t' ( - \Delta )^{\frac{\theta}{2}}}.\]
  Let  \(\| V \|_{L_t^{p'} L_z^{q'}(I \times \M)} \leq 1\).  By \eqref{Holder for Schatten}, we have 
            \begin{align*}
                &\left| \int_{I} \int_{\M} V(t,z) \rho_{\langle D \rangle^{-s} \gamma{(t)} \langle D \rangle^{-s}}(z) \, dz \, dt \right| \\
				&= \left| \int_{I} \operatorname{Tr}_{L_z^2} \left( V(t,z) \langle D \rangle^{-s} \gamma{(t)} \langle D \rangle^{-s} \right) \, dt \right| \\
				&=\left| \int_{I} \int_{0}^{t} \operatorname{Tr}_{L^{2}_{z}} \left( e^{i t ( - \Delta )^{\frac{\theta}{2}}} \langle D \rangle^{-s} V(t,z) \langle D \rangle^{-s} e^{-i t ( - \Delta )^{\frac{\theta}{2}}} e^{i t' ( - \Delta )^{\frac{\theta}{2}}} R(t') e^{-i t' ( - \Delta )^{\frac{\theta}{2}}} \right) dt dt' \right| \\
				&\leq  \int_{I} \int_{0}^{t} \operatorname{Tr}_{L^{2}_{z}} \left( A(t)B(t') \right) dt dt'  \\
				&\leq \operatorname{Tr}_{L^{2}_{z}} \left( \left( \int_{I} A(t) \, dt \right) \left(\int_{I} B(t') dt' \right) \right) \\
				&\leq \left \| \int_{I} A(t) \, dt \right\|_{\Sp^{\alpha}(L^{2}_{z}(\M))} \left \| \int_{I} B(t') \, dt' \right\|_{\Sp^{\alpha'}(L^{2}_{z}(\M))}\\
				&\leq C_{\abs{I},\theta, \sigma, \varepsilon} \| V \|_{L_t^{p'} L_z^{q'} (I \times \M)} \left \| \int_{I} B(t') \, dt' \right\|_{\Sp^{\alpha'}(L^{2}_{z}(\M))}.
            \end{align*}
Invoking duality argument, we get \eqref{WL2}.
	\end{proof}
    \begin{proof}[\textbf{Proof of Theorem \ref{TIN7} \eqref{TIN71}}] Denote $\Sp^{\alpha',s}=\Sp^{\alpha',s}(L^2_z(\M)).$
    Let \(R > 0\) such that 
		$\| \gamma_{0} \|_{\Sp^{\alpha, s} } < R,$ and 
		$T = T(R)>0$ to be chosen later. Set 
		\[
		X_T = \left\{ (\gamma, \rho) \in C ([0, T], \Sp^{\alpha', s} ) \times L_{t}^{p} L_{z}^{q} ([0, T] \times \M) : \| (\gamma, \rho) \|_{X_T} \leq C^* R \right\},
		\]
		where 
		\[
		\| (\gamma, \rho) \|_{X_T} = \| \gamma \|_{C ([0, T], \Sp^{\alpha', s} )} + \| \rho \|_{L_{t}^{p} L_{z}^{q} ([0, T] \times \M)}.
		\]
Here \(C^* > 0\) is to  chosen later independent of \(R\). We define
		\[
		\Phi(\gamma, \rho) = \left( \Phi_1(\gamma, \rho), \rho \left[ \Phi_1(\gamma, \rho) \right] \right),
		\]
\[ \Phi_1(\gamma, \rho)(t) = S(t) + \int_{0}^{t} e^{-i(t-t')( - \Delta )^{\frac{\theta}{2}}} \left[ w \ast \rho(t'), \gamma(t') \right] e^{i(t-t')( - \Delta )^{\frac{\theta}{2}}} \, dt' \]
where   $S(t)= e^{-it( - \Delta )^{\frac{\theta}{2}}} \gamma_{0}  e^{it( - \Delta )^{\frac{\theta}{2}}}$ and    \(\rho[\gamma] = \rho _{\gamma}\) (the density function of $\gamma$). The solution of \eqref{HEInP} is fixed point of \(\Phi.\) i.e.
		\( \Phi(\gamma, \rho _{\gamma}) = (\gamma, \rho _{\gamma}).\)
        Fix any \(t \in [0, T]\) and 
        \begin{align*}
           &\| \Phi_1(\gamma, \rho)(t) \|_{\Sp^{\alpha', s}} \\ 
           &\leq \| S(t) \|_{\Sp^{\alpha', s}}+ \int_{0}^{T} \| e^{-i(t-t')( - \Delta )^{\frac{\theta}{2}}} \left[ w \ast \rho(t'), \gamma(t') \right] e^{i(t-t')( - \Delta )^{\frac{\theta}{2}}} \|_{\Sp^{\alpha', s}} \, dt'.
        \end{align*}
Since $(f_{j})_{j}$ is ONS in $L^{2}_{z}(\M),$ then $(e^{-it(-\Delta)^{\frac{\theta}{2}}}f_{j})_{j}$ is as well for each $t$ and so
		\[
		\| e^{-it( - \Delta )^{\frac{\theta}{2}}} \gamma_{0} e^{it( - \Delta )^{\frac{\theta}{2}}} \|_{\Sp^{\alpha', s}} = \| \gamma_{0} \|_{\Sp^{\alpha', s}} < R,
		\]
		For the second term, again we use the inequality, we get
		\begin{align*}
			&\| e^{-i(t-t')( - \Delta )^{\frac{\theta}{2}}} \left[ w * \rho(t'), \gamma(t') \right] e^{i(t-t')( - \Delta )^{\frac{\theta}{2}}} \|_{\Sp^{\alpha', s}}\\
			&\leq \big\{ \left\| \langle D \rangle^s w * \rho(t') \langle D \rangle^{-s} \right\|_{\Sp^{\infty} }
			+
			\left\| \langle D \rangle^{-s} w \ast \rho(t') \langle D \rangle^{s} \right\|_{\Sp^{\infty} } \big\}\| \gamma(t') \|_{\Sp^{\alpha', s}}.
		\end{align*}
	The estimate we employ to evaluate the above nonlinear term is the following (see in \cite[Corollary on p. 205]{HansII}), where the inequality was proved for the \(\mathbb{R}^d\) case, but the same proof is applicable for the \(\M= \mathbb{T}^d\) or \(\mathbb{R}^n \times \mathbb{T}^m\) case:
		\[
		\| f \cdot g \|_{H^{r}(\M)} \leq C_{s, \delta} \| f \|_{B_{\infty, \infty}^{|r|+\delta}(\M)} \| g \|_{H^{r}(\M)},
		\]
		where \(r \in \mathbb{R}\) and \(\delta > 0\) are arbitrary.
		From this estimate and Young's inequality
		\begin{align*}
			\left\| \langle D \rangle^s w \ast \rho(t') \langle D \rangle^{-s} \right\|_{\Sp^{\infty} } 
			&\leq C'_{s, \delta} \| w \ast \rho(t') \|_{B^{s+\delta}_{\infty, \infty}(\M)}\\
			&\leq C'_{s, \delta} \| w \|_{B^{s+\delta}_{q', \infty}(\M)} \| \rho(t') \|_{L^q_z(\M)}.
		\end{align*}
		Similarly,
		\[
		\| \langle D \rangle^{-s} w \ast \rho(t') \langle D \rangle^s \|_{\Sp^{\infty}} \leq C'_{-s, \delta} \| w \|_{B^{s+\delta}_{q', \infty}} \| \rho(t') \|_{L^q_z(\M)}.
		\]
        In total from \((\gamma, \rho) \in X_T\), we estimate
		$$\resizebox{\textwidth}{!}{$\int_0^T \| e^{-i(t-t')(-\Delta)} D^{\theta} [ w \ast \rho(t'), \gamma(t')] e^{i(t-t')( - \Delta )^{\frac{\theta}{2}}} \|_{\Sp^{\alpha', s}} \, dt'\\  \leq  C_{s, \delta} \| w \|_{B^{s+\delta}_{q', \infty}(\M)} T^{1 / p'} (C^*R)^2.$}$$
		where \(C_{s, \delta} = C'_{s, \delta} + C'_{-s, \delta}.\)
        Thus we have
  \begin{eqnarray}\label{WL5}
      \| \Phi_{1} (\gamma, \rho) \|_{C \left([0, T] ;\Sp^{\alpha', s} \right)} \leq R+c_{s,\delta}T^{\frac{1}{p'}} \| w \|_{B^{s+\delta}_{q', \infty}(\M)} (C^*R)^2.
  \end{eqnarray}
Similarly, by using Proposition \ref{inhomogeneous estimate}, we get
\begin{align*}
    &T^{-\frac{1}{p}} C^{-1}_{\theta, \sigma, \epsilon} \|\rho[ \Phi_{1} (\gamma, \rho) ] \|_{L_t^{p} L_{z}^{q} ([0, T] \times \M)} \\
    &\leq \| \langle D \rangle^s \gamma_{0} \langle D \rangle^s \|_{\Sp^{\alpha'} }+\left\| \int_0^T e^{i t' ( - \Delta )^{\frac{\theta}{2}}} \langle D \rangle^s \left| [ w \ast \rho(t') , \gamma(t')] \right| \langle D \rangle^s e^{-i t' ( - \Delta )^{\frac{\theta}{2}}} \, dt' \right \|_{\Sp^{\alpha'} }
\end{align*}
		For the first term,
		\[
		\| \langle D \rangle^s \gamma_{0} \langle D \rangle^s \|_{\Sp^{\alpha'} } = \| \gamma_{0} \|_{\Sp^{\alpha', s} } \leq R.
		\]
		For the second term, we may employ the same argument as for
		\[
		\| \Phi_{1} (\gamma, \rho) \|_{C \left([0, T] ;\Sp^{\alpha', s} \right)},
		\]
		and we get
		\begin{eqnarray}\label{WL6}
		    \| \rho [ \Phi_1 (\gamma, \rho) ] \|_{L_t^{p} L_{z}^{q} ([0, T] \times \M)} \leq C_{\theta, \sigma, \epsilon} T^{\frac{1}{p}} \left\{R + C_{s, \delta} T^{\frac{1}{p'}} \| w \|_{B^{s+\delta}_{q', \infty}(\M)} (C^* R)^2 \right\}.
		\end{eqnarray}
		These two estimates that 
		\[
		\|\Phi(\gamma, \rho) \|_{X_{T}} \leq (1+C_{\theta, \sigma, \epsilon} T^{1/p}) \left\{R + C_{s, \delta} T^{\frac{1}{p'}} \| w \|_{B^{s+\delta}_{q', \infty}(\M)} (C^* R)^2 \right\}
		\]
		Thus $\Phi :X_{T} \to X_{T}$ if we take $C^*$ large and $T$ (Specifically, $T$ relies on $\| w \|_{B^{s+\delta}_{q', \infty}(\M)}$, not $\| w \|_{B^{s}_{q', \infty}(\M)}$, but this distinction is inconsequential since $s=\sigma+\varepsilon$ and both $\varepsilon$ and $\delta$ can be arbitrarily small) small enough. Similarly we can prove $\Phi$ is a contraction on $X_{T}$ and has a fixed point. Uniqueness can be proved with similar estimate.
    \end{proof}
\begin{proof}[\textbf{Proof of Theorem \ref{TIN7} \eqref{TIN72}}]
    Fix any $T > 0.$ From the estimates \eqref{WL5} and \eqref{WL6}, which we have already proven. These two estimates gives us that 
  \begin{eqnarray*}
      \|\Phi(\gamma, \rho) \|_{X_{T}} \leq (1+C_{\theta, \sigma, \epsilon} T^{1/p}) \left\{\| \gamma_{0} \|_{\Sp^{\alpha', s}} + C_{s, \delta} T^{\frac{1}{p'}} \| w \|_{B^{s+\delta}_{q', \infty}(\M)} \| (\gamma, \rho) \|_{X_T}^2 \right\}.
  \end{eqnarray*}
  With this in mind, we choose $R_{T}=R(\| w \|_{B^{s+\delta}_{q', \infty}(\M)})$ small enough (precisely $R_{T}$ depends on $\| w \|_{B^{s+\delta}_{q', \infty}(\M)},$ not $\| w \|_{B^{s}_{q', \infty}(\M)},$ but this is harmless since $s=\sigma+\varepsilon$ and $\varepsilon,\delta$ are arbitrarily small) so that we can find $M>0$ such that for any $y \in [0,M],$ it holds that $$(1+C_{\theta, \sigma, \epsilon} T^{1/p}) \left\{\| \gamma_{0} \|_{\Sp^{\alpha', s}} + C_{s, \delta} T^{\frac{1}{p'}} \| w \|_{B^{s+\delta}_{q', \infty}(\M)} y^2 \right\} \leq M,$$ as long as $\| \gamma_{0} \|_{\Sp^{\alpha', s}} \leq R_{T}.$\\
  Define the space $$X_{T,M} :=\left\{ (\gamma, \rho) \in X_{T} \mid \| (\gamma, \rho) \|_{X_{T}} \leq M \right\},$$ then by similar way we see that $\Phi : X_{T,M} \to X_{T,M}$ is contraction map by choosing sufficiently small $R_{T},$ and hence from fixed point theorem we find a solution $$\gamma \in C ([0, T]; \Sp^{\frac{2 q}{q+1}, s} ),$$ satisfying \eqref{HEInP} on $[0,T] \times \M,$ and $\rho_{\gamma} \in L_{t}^{p} L_{z}^{q} ([0, T] \times \M).$
\end{proof}
  
\section*{Acknowledgement}
The first author wishes to thank the National Board for Higher Mathematics (NBHM) for the research fellowship and Indian Institute of Science Education and Research, Pune for the support provided during the period of this work. The author also thanks S. S. Mondal for helpful discussions.

    
	\bibliographystyle{siam}
	\bibliography{main.bib}
\end{document}